\documentclass[11pt]{amsart}

\usepackage[english]{babel}
\usepackage{amsmath}
\usepackage{amsthm}
\usepackage{amssymb}
\usepackage{amsfonts}
\usepackage{amsxtra}
\usepackage{color}
\usepackage[breaklinks]{hyperref}
\usepackage{enumerate}

\numberwithin{equation}{section}

\newcommand{\td}{\,\mathrm{d}}
\newcommand{\Ad}{\textup{Ad}}

\renewcommand\Re{\operatorname{Re}}
\renewcommand\Im{\operatorname{Im}}
\newcommand{\id}{\textup{id}}
\newcommand{\Sym}{\textup{Sym}}
\newcommand{\Skew}{\textup{Skew}}
\newcommand{\Herm}{\textup{Herm}}
\newcommand{\GL}{\textup{GL}}

\newcommand{\SL}{\textup{SL}}
\renewcommand{\sl}{\mathfrak{sl}}
\newcommand{\upS}{\textup{S}}
\newcommand{\Sp}{\textup{Sp}}
\renewcommand{\sp}{\mathfrak{sp}}
\newcommand{\upO}{\textup{O}}
\newcommand{\SO}{\textup{SO}}
\newcommand{\so}{\mathfrak{so}}
\newcommand{\Spin}{\textup{Spin}}

\newcommand{\upU}{\textup{U}}
\newcommand{\SU}{\textup{SU}}
\newcommand{\su}{\mathfrak{su}}
\newcommand{\upF}{\textup{F}}

\newcommand{\RR}{\mathbb{R}}

\newcommand{\CC}{\mathbb{C}}
\newcommand{\ZZ}{\mathbb{Z}}
\newcommand{\NN}{\mathbb{N}}

\newcommand{\HH}{\mathbb{H}}
\newcommand{\OO}{\mathbb{O}}

\newcommand{\FF}{\mathbb{F}}

\newcommand{\1}{\mathbf{1}}
\newcommand{\0}{\mathbf{0}}
\newcommand{\Ind}{\textup{Ind}}

\newcommand{\calE}{\mathcal{E}}
\newcommand{\calO}{\mathcal{O}}

\newcommand{\calD}{\mathcal{D}}

\newcommand{\frakg}{\mathfrak{g}}

\newcommand{\frakk}{\mathfrak{k}}
\newcommand{\frakp}{\mathfrak{p}}
\newcommand{\fraks}{\mathfrak{s}}
\newcommand{\frakn}{\mathfrak{n}}
\newcommand{\fraka}{\mathfrak{a}}

\newcommand{\frakm}{\mathfrak{m}}

\newcommand{\frakh}{\mathfrak{h}}

\newcommand{\Det}{\textup{Det}}

\newcommand{\pr}{\textup{pr}}

\newcommand{\diag}{\textup{diag}}

\DeclareMathOperator{\Gr}{Gr}
\newcommand{\Hom}{\textup{Hom}}

\newcommand{\blank}{\,\cdot\,}

\newcommand{\Rspan}{\RR_+\textup{-\,span}}

\DeclareMathOperator{\Stab}{Stab}
\newcommand{\reg}{{\textup{reg}}}

\theoremstyle{plain}
\newtheorem{theorem}{Theorem}[section]
\newtheorem{proposition}[theorem]{Proposition}
\newtheorem{lemma}[theorem]{Lemma}
\newtheorem{corollary}[theorem]{Corollary}

\newtheorem{fact}[theorem]{Fact}
\newtheorem{problem}{Problem}
\newtheorem{thmalph}{Theorem}

\theoremstyle{definition}

\newtheorem{example}[theorem]{Example}

\newtheorem{remark}[theorem]{Remark}

\begin{document}

\title[Knapp--Stein type intertwining operators for symmetric pairs]{Knapp--Stein type intertwining operators for symmetric pairs}
\date{January 7, 2015}

\author{Jan M\"{o}llers}
\author{Bent \O rsted}
\author{Yoshiki Oshima}

\address{Department of Mathematics, The Ohio State University, 231 West 18th Avenue, Columbus, OH 43210, USA}
\email{mollers.1@osu.edu}

\address{Institut for Matematiske Fag, Aarhus Universitet, Ny Munkegade 118, 8000 Aarhus C, Denmark}
\email{orsted@imf.au.dk}

\address{Kavli IPMU (WPI), The University of Tokyo, 5-1-5 Kashiwanoha, Kashiwa, 277-8583, Japan}
\email{yoshiki.oshima@ipmu.jp}
\begin{abstract}
For a symmetric pair $(G,H)$ of reductive groups we construct a family of intertwining operators between spherical principal series representations of $G$ and $H$ that are induced from parabolic subgroups satisfying certain compatibility conditions. The operators are given explicitly in terms of their integral kernels and we prove convergence of the integrals for an open set of parameters and meromorphic continuation. We further discuss uniqueness of intertwining operators, and for the rank one cases
$$ (G,H)=(\SU(1,n;\FF),\upS(\upU(1,m;\FF)\times\upU(n-m;\FF))), \quad \FF=\RR,\CC,\HH,\OO,  $$
and for the pair
$$ (G,H)=(\GL(4n,\RR),\GL(2n,\CC)) $$
we show that for a certain choice of maximal parabolic subgroups our operators generically span the space of intertwiners.
\end{abstract}

\subjclass[2010]{Primary 22E45; Secondary 47G10.}

\keywords{Knapp--Stein intertwiner, intertwining operator, symmetry breaking operator, invariant trilinear forms, principal series, double flag variety.}

\maketitle

\section*{Introduction}

Intertwining operators of various forms have been a cornerstone of group representation theory, both for classical applications in physics, for understanding the structure of induced representations, and more recently in connection with the study of branching laws. For a unitary representation $\pi$ of a Lie group $G$, the branching with respect to a closed subgroup $H$ means considering the restriction of $\pi$ to $H$ and finding its irreducible constituents. One may ask similar questions for the category of smooth representations, and alternatively with reductive groups for the algebraic category of Harish-Chandra modules. In general such problems are very complicated, and one has to restrict to subclasses of groups and representations where useful answers are to be found. T. Kobayashi~\cite{Kob13} has as part of his program introduced the notion of {\it symmetry breaking operators}, much in the spirit of the notion of {\it symmetry breaking} in physics; these are operators in the space $\Hom_H(\pi|_H,\tau)$ for representations $\pi$ of $G$ and $\tau$ of $H$, say in the smooth category. He posed the following problem:
\begin{problem}[{\cite[Problem A]{Kob13}}]\label{prb:Intro1}
Construct explicitly \textit{symmetry breaking operators} in $\Hom_H(\pi|_H,\tau)$, and classify them.
\end{problem}
For several pairs $(G,H)$ of classical groups Sun-Zhu~\cite{SZ12} recently showed that the space $\Hom_H(\pi|_H,\tau)$ is at most one-dimensional for all irreducible smooth representations $\pi$ and $\tau$ of Casselman-Wallach type (see also \cite{AGRS10} and references therein). General bounds on the number of \textit{symmetry breaking operators} are proven by Kobayashi--Oshima~\cite{KO11}. A first example with a complete description of $\Hom_H(\pi|_H,\tau)$ for $\pi$ and $\tau$ in a certain subclass of representations is given by Kobayashi--Speh~\cite{KS,KS14}. Other examples of \textit{symmetry breaking operators} are differential operators such as the Juhl operators or the Rankin--Cohen brackets and their generalizations, see e.g.\ \cite{BC12,KOSS,KP13} and references therein.

In the present paper we shall explicitly construct a family of \textit{symmetry breaking operators} in the setting where $(G, H)$ is a reductive symmetric pair. These operators are natural extensions of the Knapp--Stein operators~\cite{KS71,KS80} intertwining between parabolically induced representations, in our case from representations of $G$ to representations of $H$. As the classical Knapp--Stein operators, our new \textit{symmetry breaking operators} are singular integral operators which we define in terms of their integral kernel. Our construction generalizes operators previously studied in the context of invariant trilinear forms as well as a family of operators studied in \cite{KS,KS14} for rank one orthogonal groups.

While we have to impose some technical conditions on the groups in question, the basic idea of our construction is rather simple, and we have tried to make it accessible in the spirit of Knapp--Stein operators. Also, we have indicated a number of examples and future directions of research; we expect that our operators give new interactions with other fields such as branching problems, automorphic functions or harmonic analysis on the homogeneous spaces $(G\times H)/\Delta(H)$ with $\Delta(H)\subseteq G\times H$ being the diagonal embedding of $H$. In fact, in \cite{MO12} the first and the last author already used the family of intertwining operators to derive the full branching law for the restriction of complementary series representations of $\SO(1,n)$ with respect to a symmetric pair. Further, the first and the second author combine in \cite{MO13} the explicit form of our integral kernels with a certain multiplicity-one property to derive estimates for the restriction of automorphic functions. Finally the integral kernels of our intertwining operators can be viewed as $H$-invariant distribution vectors on tensor product representations of $G\times H$ and therefore are related to harmonic analysis on the space $(G\times H)/\Delta(H)$ (see Remark~\ref{rem:InvDistVectors}).

We now describe our results in more detail.

\subsection{Symmetry breaking operators}

We restrict our attention to a certain subclass of representations $\pi$ and $\tau$. For this let $G$ be a real reductive Lie group in the Harish-Chandra class and $P=MAN$ a parabolic subgroup. Denote by $\fraka$ the Lie algebra of $A$ and write
\[ I^G(\nu)=\Ind_P^G(\1\otimes e^\nu\otimes\1) \]
for the spherical principal series representation of parameter $\nu\in\fraka_\CC^*$ (smooth normalized parabolic induction). Suppose that $(G, H)$ is a symmetric pair with corresponding involution $\sigma$, i.e.\ $H$ is open in $G^\sigma$. For a parabolic subgroup $P_H=M_HA_HN_H$ of $H$ we also consider the spherical principal series representation
\[ I^H(\nu')=\Ind_{P_H}^H(\1\otimes e^{\nu'}\otimes\1) \]
of parameter $\nu'\in(\fraka_H)_\CC^*$ where $\fraka_H$ is the Lie algebra of $A_H$. We then study Problem~\ref{prb:Intro1} for spherical principal series representations:
\begin{problem}
Construct explicitly \textit{symmetry breaking operators} in $\Hom_H(\pi|_H,\tau)$ for $\pi=I^G(\nu)$ and $\tau=I^H(\nu')$, and classify them.
\end{problem}

\subsection{The classical Knapp--Stein intertwiners}

For the case $H=G$ this problem has been well studied and \textit{symmetry breaking operators} are provided by the classical Knapp--Stein intertwiners. To obtain intertwiners between representations induced from the same parabolic subgroup $P=P_H$ we assume that
\begin{equation}
 \mbox{$P$ and its opposite $\overline{P}$ are conjugate via the Weyl group.}\label{eq:CondGIntro}\tag{G}
\end{equation}
Write $w_0$ for the longest Weyl group element and $\tilde{w}_0$ for one of its representatives so that \eqref{eq:CondGIntro} means $\tilde{w}_0^{-1}P\tilde{w}_0=\overline{P}=MA\overline{N}$. Then the operators
\[ \tilde{A}(\nu):I^G(\nu)\to I^G(w_0\nu), \quad \tilde{A}(\nu)f(g) = \int_{\overline{N}} f(g\tilde{w}_0\overline{n}) \td\overline{n} \]
belong to $\Hom_G(\pi,\tau)$ for $\pi=I^G(\nu)$, $\tau=I^G(w_0\nu)$ and sufficiently positive $\nu\in\fraka_\CC^*$. To extend the operators $\tilde{A}(\nu)$ meromorphically in $\nu$ we realize all the representations $I^G(\nu)$ on the space $C^\infty(X)$ with $X=K/(M\cap K)$ where $K$ is a maximal compact subgroup of $G$ whose Cartan involution leaves $MA$ invariant. Denote the corresponding $G$-action on $C^\infty(X)$ by $\pi_\nu$ so that $I^G(\nu)\cong(\pi_\nu,C^\infty(X))$. Using the $A$-projection $a:\overline{N}MAN\to A$ which is defined on the open dense subset $\overline{N}MAN\subseteq G$ we can write $\tilde{A}(\nu)$ as a singular integral operator $A(\nu):C^\infty(X)\to C^\infty(X)$ (cf. \cite[equation (7.37)]{Kna86}):
\begin{equation}
 A(\nu)f(k) = \int_K a(\tilde{w}_0^{-1}k^{-1}k')^{\nu-\rho}f(k') \td k',\label{eq:ClassicalKSasConvolution}
\end{equation}
where $\rho\in\fraka^*$ is half the sum of all positive roots of $(P,A)$. Then the operators $A(\nu)$ on $C^\infty(X)$ extend meromorphically in the parameter $\nu\in\fraka_\CC^*$.

\subsection{Invariant kernels for symmetric pairs}

In order to use the Knapp--Stein integral kernels in the construction of \textit{symmetry breaking operators} for more general symmetric pairs $(G,H)$ we assume in addition to \eqref{eq:CondGIntro} the following condition:
\begin{align}
 \mbox{$P$ is $\sigma$-stable,}\label{eq:CondHIntro}\tag{H}
\end{align}
which implies that $P_H:=P\cap H$ is a parabolic subgroup of $H$.
Various examples of triples $(G,H,P)$ satisfying conditions \eqref{eq:CondGIntro} and \eqref{eq:CondHIntro} are given in Section~\ref{sec:ExamplesCompatibleSymmetricPairs}. Among them are
\begin{itemize}
\item the rank one cases
\[ (G,H,P)=(\SU(1,n;\FF),\upS(\upU(1,m;\FF)\times\upU(n-m;\FF)),P_{\min}) \]
with $\FF=\RR,\CC,\HH$ and $0<m<n$ or $\FF=\OO$ and $n=m+1=2$, and $P_{\min}$ a suitable minimal parabolic subgroup,
\item the product cases
\[ (G,H,P)=(G'\times G',\Delta(G'),P'\times P'), \]
where $P'\subseteq G'$ is a parabolic subgroup which is conjugate to its opposite and $\Delta(G')\subseteq G'\times G'$ denotes the diagonal,
\item several examples $(G,H,P)$ with $P$ a maximal parabolic subgroup with abelian nilradical such as
\begin{multline*}
 \hspace{.9cm}(G,H,P)=(\GL(4n,\RR),\GL(2n,\CC),\\(\GL(2n,\RR)\times\GL(2n,\RR))\ltimes M(2n,\RR)).
\end{multline*}
\end{itemize}

For $\alpha,\beta\in\fraka_\CC^*$ we define a kernel function
\[ K_{\alpha,\beta}(g,h) := a(\tilde{w}_0^{-1}g^{-1}h)^\alpha a(\tilde{w}_0^{-1}g^{-1}\sigma(g))^\beta, \qquad (g,h)\in G\times H, \]
whenever the expression makes sense. For $\beta=0$ this gives the integral kernel of the classical Knapp--Stein intertwiners in \eqref{eq:ClassicalKSasConvolution}. The domain of definition for the kernel $K_{\alpha,\beta}(g,h)$ is investigated in Section~\ref{sec:KernelDomain} where we prove that this domain is either empty or open dense in $G\times H$ and give a criterion to check this (see Proposition~\ref{prop:DDense}). In what follows we will simply assume that
\begin{equation}
 \mbox{the domain of definition for $K_{\alpha,\beta}(g,h)$ is non-empty.}\label{eq:CondDIntro}\tag{D}
\end{equation}
Condition~\eqref{eq:CondDIntro} is in particular satisfied in the above examples (see Corollary~\ref{cor:DDense}), but not in general (see Example~\ref{ex:DDenseForSp(n,R)}).

\subsection{Construction of symmetry breaking operators}

Analogously as for $G$, we realize the representations $I^H(\nu')$ of $H$ on the space $C^\infty(X_H)$ with $X_H=K_H/(M_H\cap K_H)$ and denote the $H$-action on this space by $\tau_{\nu'}$. Further, denote by $\rho'\in\fraka_H^*$ half the sum of all positive roots of $(P_H,A_H)$. Assuming conditions \eqref{eq:CondGIntro}, \eqref{eq:CondHIntro} and \eqref{eq:CondDIntro} we prove in Theorem~\ref{thm:ConvergenceMeromorphicContinuationIntertwiners}:

\begin{thmalph}\label{thm:IntroA}
\begin{enumerate}[(1)]
\item For $f\in C^\infty(X)$ the integral
\[ A(\alpha,\beta)f(k_H) := \int_K K_{\alpha,\beta}(k,k_H)f(k) \td k, \qquad k_H\in K_H, \]
converges absolutely for $\Re\alpha,\Re\beta$ in an open set $\fraka_+^*\subseteq\fraka^*$ (see \eqref{eq:Defa^*_+} for the precise definition) and extends meromorphically in $\alpha,\beta\in\fraka_\CC^*$ to a non-trivial family of continuous linear operators $A(\alpha,\beta):C^\infty(X)\to C^\infty(X_H)$.
\item For a regular point $(\alpha,\beta)$ of $A(\alpha,\beta)$ and
\begin{equation}
 \nu=-w_0\alpha+\sigma\beta-w_0\beta+\rho, \qquad \nu'=-\alpha|_{\fraka^\sigma_\CC}-\rho'\label{eq:IntroParameterRelation}
\end{equation}
the map $A(\alpha,\beta)$ defines an $H$-intertwining operator $\pi_\nu|_H\to\tau_{\nu'}$, i.e.
$$ A(\alpha,\beta)\in\Hom_H(\pi_\nu|_H,\tau_{\nu'}). $$
\end{enumerate}
\end{thmalph}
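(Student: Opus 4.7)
The plan is to base the argument on an equivariance computation for the kernel $K_{\alpha,\beta}$, from which part~(1) follows by standard singular-integral techniques and part~(2) as a formal consequence. A short computation using the identities
\[
a(\overline n\,x)=a(x),\quad a(\tilde m\,x)=a(x),\quad a(a_0\,x)=a_0\,a(x),\quad a(x\,p)=a(x)\,a(p)
\]
(valid for $\overline n\in\overline N$, $\tilde m\in M$, $a_0\in A$, $p\in MAN$), together with $\tilde{w}_0^{-1}P\tilde{w}_0=\overline P$ from~\eqref{eq:CondGIntro} and the $\sigma$-stability of $P$ from~\eqref{eq:CondHIntro}, gives for $p\in P$ and $p_H\in P_H$ the equivariance
\[
K_{\alpha,\beta}(gp,hp_H)=a(p)^{-w_0\alpha+\sigma\beta-w_0\beta}\,a(p_H)^{\alpha|_{\fraka^\sigma_\CC}}K_{\alpha,\beta}(g,h),
\]
which, up to the standard sign conventions for normalized induction, is precisely the $(P,P_H)$-equivariance making the kernel a morphism of induced representations with parameters $(\nu,\nu')$ given by~\eqref{eq:IntroParameterRelation}. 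Moreover, since $\sigma$ fixes every element of $H$, the relation $h_0^{-1}\sigma(h_0)=e$ for $h_0\in H$ yields immediately the diagonal $H$-invariance $K_{\alpha,\beta}(h_0g,h_0h)=K_{\alpha,\beta}(g,h)$.

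For part~(1), I would analyze the singularities of the integrand on $K\times K_H$: the factor $a(\tilde{w}_0^{-1}k^{-1}k_H)^\alpha$ is singular where $\tilde{w}_0^{-1}k^{-1}k_H$ leaves the big Bruhat cell $\overline NMAN$, while $a(\tilde{w}_0^{-1}k^{-1}\sigma(k))^\beta$ is singular on the analogous locus cut out by $\sigma$; by~\eqref{eq:CondDIntro} the intersection of the two regular loci is open dense. Working in Bruhat coordinates $\overline n\in\overline N$ and using the standard estimate $\int_{\overline N}a(\overline n)^{\Re\alpha-\rho}\td\overline n<\infty$ for $\Re\alpha$ in an appropriate cone (the classical Knapp--Stein analysis applied to the first factor), one singles out an explicit open cone $\fraka_+^*\subseteq\fraka^*$ on which the joint singularity is locally integrable; hence for $\Re\alpha,\Re\beta\in\fraka_+^*$ the integral $A(\alpha,\beta)f(k_H)=\int_K K_{\alpha,\beta}(k,k_H)f(k)\td k$ converges absolutely, uniformly in $k_H\in K_H$ and in $f$ on bounded subsets of $C^\infty(X)$. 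Smoothness of the result in $k_H$ follows by differentiation under the integral sign, and the meromorphic extension in $(\alpha,\beta)\in\fraka^*_\CC\times\fraka^*_\CC$ follows from Bernstein's theorem on complex powers of real analytic functions, applied to each of the two factors.

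For part~(2), once $f\in C^\infty(X)$ is lifted to a function on $G$ in the standard way, the equivariance computed above shows that $K_{\alpha,\beta}(g,h)f(g)$ descends to a section of an appropriate $G\times H$-equivariant line bundle on $G/P\times H/P_H$, so the integral $\int_K K_{\alpha,\beta}(k,k_H)f(k)\td k$ may be interpreted as an integral over $G/P$. The change of variable $g\mapsto h_0g$ for $h_0\in H$ combined with the diagonal $H$-invariance of the kernel then yields
\[
A(\alpha,\beta)\bigl(\pi_\nu(h_0)f\bigr)(h)=A(\alpha,\beta)f(h_0^{-1}h)=\tau_{\nu'}(h_0)\bigl(A(\alpha,\beta)f\bigr)(h)
\]
on the region of absolute convergence, and the identity extends to all regular points of the meromorphic family by analytic continuation.

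The main difficulty is the convergence step. In the classical Knapp--Stein case only the single factor $a(\tilde{w}_0^{-1}k^{-1}k_H)^\alpha$ has to be controlled, and its singular divisor is the codimension-one complement of the big Bruhat cell, for which there is a well-understood local model on $\overline N$. Here the second factor $a(\tilde{w}_0^{-1}k^{-1}\sigma(k))^\beta$ introduces a further singular locus whose geometry is determined by the $\sigma$-action on the flag variety, and one must estimate the product against both divisors simultaneously. Identifying the explicit open cone $\fraka_+^*\subseteq\fraka^*$ of simultaneous integrability is the main technical work, and is presumably what forces condition~\eqref{eq:CondDIntro} to be imposed as a separate, case-by-case hypothesis rather than derivable from~\eqref{eq:CondGIntro} and~\eqref{eq:CondHIntro} alone.
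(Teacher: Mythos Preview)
Your outline is correct on the equivariance of $K_{\alpha,\beta}$ (this is exactly Proposition~\ref{prop:KernelEquivariance} in the paper) and on the intertwining identity, which the paper likewise verifies by a change of variables in the compact picture and extends by analytic continuation. The substantive difference is in the convergence step.

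You identify the simultaneous estimation of the two singular divisors as ``the main technical work'' and propose to attack it by Bruhat-coordinate estimates of Gindikin--Karpelevich type. The paper avoids this entirely by a much simpler observation: for $\lambda\in\Lambda^+(\fraka)$ (dominant integral and vanishing on $\frakm\cap\fraka_{\min}$), the function $g\mapsto a(g)^\lambda$ is a matrix coefficient of a finite-dimensional spherical representation (Cartan--Helgason) and therefore extends to a \emph{bounded} real-analytic function on all of $K$, with no singularity at all (Lemma~\ref{lem:BoundednessAFct}). Hence for $\Re\alpha,\Re\beta$ in the cone $\fraka_+^*:=\Rspan\,\Lambda^+(\fraka)$ the kernel $\tilde K_{\alpha,\beta}(k):=K_{\alpha,\beta}(k,\1)$ is simply a bounded function on $K$ and absolute convergence is immediate. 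No local-integrability analysis near the singular locus is needed; this is precisely how $\fraka_+^*$ is defined in~\eqref{eq:Defa^*_+} --- as a cone of smoothness, not a cone of integrability. Your harder route would presumably also work, but the matrix-coefficient trick is the key shortcut you are missing.

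One further point: applying Bernstein's theorem ``to each of the two factors'' separately gives meromorphic continuation in one variable at a time, not joint meromorphic continuation in $(\alpha,\beta)$. The paper writes $\tilde K_{\alpha,\beta}=u_1^{s_1}\cdots u_{2r}^{s_{2r}}$ with the $u_j$ non-negative real-analytic on the compact manifold $K$ (again using the matrix-coefficient interpretation) and invokes the multivariable form of the theorem on complex powers (Atiyah, Bernstein--Gel'fand, Sabbah; see Theorem~\ref{thm:KashiwaraKawai}), which yields the joint extension with hyperplane poles directly.
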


The relation \eqref{eq:IntroParameterRelation} between the parameters $\alpha,\beta$ of the kernel $K_{\alpha,\beta}(g,h)$ and the induction parameters $\nu,\nu'$ is discussed in Section~\ref{sec:InductionParameters}. In the case where $w_0|_\fraka=-1$ the mapping $(\alpha,\beta)\mapsto(\nu,\nu')$ can be turned into a bijection onto a certain subset of induction parameters.

We remark that the intertwining operators $A(\alpha,\beta)$ are known in two special cases:
\begin{itemize}
\item For $(G,H,P)=(G'\times G',\Delta(G'),P'\times P')$ the operators $A(\alpha,\beta)$ are $G'$-intertwining operators $I^{G'}(\nu_1)\otimes I^{G'}(\nu_2)\to I^{G'}(\nu_3)$ (see Example~\ref{ex:KernelExamples}~(2)). These operators correspond to invariant trilinear forms $I^{G'}(\nu_1)\times I^{G'}(\nu_2)\times I^{G'}(-\nu_3)\to\CC$ which were investigated for various groups ${G'}$, see \cite{BC12,BSKZ,BR04,CKOP11,CO11,Dei06,Oks73}.
\item For $(G,H,P)=(\SO(1,n),\SO(1,n-1),P_{\min})$ the operators $A(\alpha,\beta)$ were previously investigated by Kobayashi--Speh~\cite{KS,KS14} (see also \cite{Kob13,MO12}). They use these operators to completely determine the spaces $\Hom_H(\pi|_H,\tau)$ for $\pi=\pi_\nu$, $\tau=\tau_{\nu'}$ and arbitrary $\nu,\nu'$.
\end{itemize}

\subsection{The space $\Hom_H(\pi|_H,\tau)$}

Since the kernel $K_{\alpha,\beta}(g,h)$ is left-invariant under the diagonal action of $H$ and right-equivariant under the action of $P\times P_H$, it can be viewed as an $H$-invariant section of a certain line bundle over the double flag variety $G/P\times H/P_H$. This suggests a connection between $\dim\Hom_H(\pi|_H,\tau)$ and the number of open $H$-orbits in $G/P\times H/P_H$. In fact, in Section~\ref{sec:Uniqueness} we outline a general technique to relate these two quantities, and apply this technique in Theorem~\ref{thm:UniquenessRank1} and \ref{thm:UniquenessNewExample} in two special cases. We further investigate the number of open $H$-orbits in $G/P\times H/P_H$ in Proposition~\ref{prop:OrbitsDoubleFlag}. The results can be summarized as follows:

\begin{thmalph}\label{thm:IntroB}
\begin{enumerate}[(1)]
\item The number of open $H$-orbits in $G/P\times H/P_H$ equals the number of open $M_HA_H$-orbits in $\frakn^{-\sigma}$ where $\frakn$ is the Lie algebra of $N$ and
\[ \frakn^{-\sigma} = \{X\in\frakn:\sigma X=-X\}. \]
\item For $(G,H)=(\SU(1,n;\FF),\upS(\upU(1,m;\FF)\times\upU(n-m;\FF)))$, $\FF=\RR,\CC,\HH$ and $0<m<n$ or $\FF=\OO$ and $n=m+1=2$, there is only one open $H$-orbit in $G/P\times H/P_H$ and
\[ \Hom_H(\pi_\nu|_H,\tau_{\nu'}) = \CC\cdot A(\alpha,\beta) \]
for generic parameters (see Theorem~\ref{thm:UniquenessRank1} for the precise statement).
\item For $(G,H)=(\GL(4n,\RR),\GL(2n,\CC))$ with parabolic subgroups $P=(\GL(2n,\RR)\times\GL(2n,\RR))\ltimes M(2n,\RR)$ and $P_H=(\GL(n,\CC)\times\GL(n,\CC))\ltimes M(n,\CC)$, there is only one open $H$-orbit in $G/P\times H/P_H$ and
\[ \Hom_H(\pi_\nu|_H,\tau_{\nu'}) = \CC\cdot A(\alpha,\beta) \]
for generic parameters that agree on the center of $\frakg$ (see Theorem~\ref{thm:UniquenessNewExample} for the precise statement).
\end{enumerate}
\end{thmalph}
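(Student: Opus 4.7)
For part~(1), the plan is a standard orbit-reduction combined with a normal-form argument. First, since $H$ acts transitively on $H/P_H$ with isotropy $P_H$ at the base point, the map $H\cdot(gP,hP_H)\mapsto P_H\cdot(h^{-1}gP)$ yields a bijection between $H$-orbits on $G/P\times H/P_H$ and $P_H$-orbits on $G/P$. The open Bruhat cell $\overline{N}P/P$ is open dense in $G/P$, so every open $P_H$-orbit meets it, and we may take a representative $\overline{n}P$; the open condition then reads $\frakg=\frakp_H+\Ad(\overline{n})\frakp$. The technical heart is a normal-form statement: by analyzing the infinitesimal $N_H$-action on the Bruhat cell (whose derivative at $\overline{n}$ is $X\mapsto\pr_{\overline{\frakn}}\Ad(\overline{n}^{-1})X$ for $X\in\frakn_H=\frakn^\sigma$), one shows that the $N_H$-orbit generically sweeps the $\overline{\frakn}^\sigma$-direction, so that each open $P_H$-orbit admits a representative $\exp(Y)P$ with $Y\in\overline{\frakn}^{-\sigma}$. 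A short calculation shows the residual ambiguity is precisely the $\Ad$-conjugation action of $M_HA_H$ on $\overline{\frakn}^{-\sigma}$, and a $\sigma$-compatible Cartan involution $\theta$ identifies $\overline{\frakn}^{-\sigma}$ with $\frakn^{-\sigma}$ as an $M_HA_H$-module, giving the desired bijection.

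For parts~(2) and~(3), I apply~(1) to reduce the orbit count on the double flag variety to an orbit count on $\frakn^{-\sigma}$. In the rank-one cases $(G,H)=(\SU(1,n;\FF),\upS(\upU(1,m;\FF)\times\upU(n-m;\FF)))$, the space $\frakn^{-\sigma}$ is coordinatized by a single $\FF$-``line'' plus a possible Heisenberg-type contribution, and $M_HA_H$ acts through an $\RR_+$-dilation together with a suitable compact rotation group; a direct invariant-theoretic calculation, involving only the norm-like polynomial invariant, yields a single open orbit. For $(G,H)=(\GL(4n,\RR),\GL(2n,\CC))$ with the specified block-parabolic subgroups, $\frakn^{-\sigma}$ identifies with an affine space of $2n\times 2n$ real matrices on which $M_HA_H\cong\GL(n,\CC)\times\GL(n,\CC)$ acts by left and right multiplication via $\GL(n,\CC)\hookrightarrow\GL(2n,\RR)$, and standard linear algebra shows that the invertible locus is the unique open orbit.

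The $\Hom$-spanning statements in~(2) and~(3) then follow from the general framework outlined in Section~\ref{sec:Uniqueness}: each element of $\Hom_H(\pi_\nu|_H,\tau_{\nu'})$ corresponds to an $H$-invariant distributional section of a certain line bundle on $G/P\times H/P_H$, and this space is filtered by the $H$-orbit stratification, with graded pieces given by invariant distributions on the conormal bundles of each stratum. The unique open orbit contributes at most a one-dimensional subspace; for generic induction parameters, a transverse-order analysis shows that no non-zero invariant distributions are supported on the lower-dimensional orbits, since such distributions would force $(\nu,\nu')$ to lie on a proper subvariety. Combined with the non-triviality of $A(\alpha,\beta)$ guaranteed by Theorem~\ref{thm:IntroA}, this gives $\Hom_H(\pi_\nu|_H,\tau_{\nu'})=\CC\cdot A(\alpha,\beta)$.

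The principal obstacles are twofold. First, the normal-form step in~(1) requires verifying that $\pr_{\overline{\frakn}^\sigma}\pr_{\overline{\frakn}}\Ad(\overline{n}^{-1})\frakn^\sigma=\overline{\frakn}^\sigma$ at generic $\overline{n}$; this is a transversality statement that must be established from the $\sigma$-decomposition and the interplay of $\frakn$ with $\overline{\frakn}$ under $\Ad(\overline{n}^{-1})$. Second, the transverse analysis of the boundary orbits in the uniqueness step requires an explicit parameterization of the conormal structure at each lower-dimensional orbit, and the identification of the discrete set of ``exceptional'' parameters where invariant distributions of higher transverse order can appear; the complement of these exceptional parameters is exactly what ``generic'' means in the statement.
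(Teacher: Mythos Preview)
Your overall strategy matches the paper's: the reduction $\Delta(H)\backslash(G/P\times H/P_H)\cong P_H\backslash G/P$, the normal-form step reducing to $M_HA_H$-orbits on $\frakn^{-\sigma}$, and the Bruhat-vanishing analysis on boundary orbits for generic uniqueness are all exactly what the paper does. Two points deserve comment.

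For part~(1), the obstacle you flag---the transversality statement needed to put a generic $P_H$-orbit representative into $\exp(\overline{\frakn}^{-\sigma})$---is resolved in the paper not by an infinitesimal argument but by an explicit \emph{global} diffeomorphism (Lemma~\ref{lem:TauDecompositionN}): the map $N_H\times\frakn^{-\sigma}\to N$, $(n,Y)\mapsto ne^Y$, has smooth inverse given by the closed formulas $n_H=n(n^{-1}\sigma(n))^{1/2}$ and $Y=\log\bigl((n^{-1}\sigma(n))^{-1/2}\bigr)$, using that $\exp$ is a diffeomorphism on the nilpotent group $N$. This yields a representative in $\exp(\frakn^{-\sigma})\tilde{w}_0P$ for \emph{every} $P_H$-orbit in the open Bruhat cell, not only the open ones, and the bijection of open orbits drops out immediately (Proposition~\ref{prop:OrbitsDoubleFlag}). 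No genericity or derivative computation is needed.

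For part~(3), your identification of $\frakn^{-\sigma}$ is off by a factor of two in dimension: $\frakn\cong M(2n,\RR)$ is the full nilradical, and $\frakn^{-\sigma}$ is the half-dimensional subspace of matrices anticommuting with $J_n$, identified with $M(n,\CC)$ via $\left(\begin{smallmatrix}A&B\\B&-A\end{smallmatrix}\right)\mapsto A+iB$ (see~\eqref{eq:Ex2IdentificationNMinusSigma}). The action of $M_HA_H\cong\GL(n,\CC)\times\GL(n,\CC)$ is then genuine left-right complex multiplication on $M(n,\CC)$, and the unique open orbit is the set of invertible $n\times n$ complex matrices. Your conclusion is correct once the space is fixed.

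For the uniqueness arguments in~(2) and~(3), the paper carries out precisely the stratified Bruhat-vanishing analysis you outline, computing the stabilizers, quotient tangent spaces, and the characters $\beta$ and $\chi$ at each boundary orbit explicitly to extract the arithmetic conditions on $(\nu,\nu')$ recorded in Theorems~\ref{thm:UniquenessRank1} and~\ref{thm:UniquenessNewExample}.
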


We remark that Kobayashi--Oshima~\cite{KO11} showed $\dim\Hom_H(\pi|_H,\tau)<\infty$ for all irreducible admissible representations $\pi$ and $\tau$ if $H$ has an open orbit on $G/P_{\min}\times H/P_{H,\min}$ for $P_{\min}$ and $P_{H,\min}$ minimal parabolic subgroups. This condition is stronger than $H$ having an open orbit on $G/P\times H/P_H$. For $\FF=\RR,\CC$ and $m=n-1$, Theorem~\ref{thm:IntroB}~(2) also follows from the multiplicity-one theorem by Sun--Zhu~\cite{SZ12}.

\subsection{Outlook}

We indicate some possible further lines of research:
\begin{itemize}
\item (Singular integral operators) In the non-compact realizations of $I^G(\nu)$ and $I^H(\nu')$ on functions on $\overline{N}$ and $\overline{N}_H$ the intertwiners $A(\alpha,\beta)$ are singular integral operators on nilpotent Lie groups. The meromorphic nature of these operators from a viewpoint of classical analysis was studied in detail by Kobayashi--Speh~\cite{KS,KS14} for the case $(G,H)=(\upO(1,n),\upO(1,n-1))$ and is of interest for other cases, too.
\item (Bernstein--Sato identities) Our proof of meromorphic extension in the parameters $\alpha,\beta$ does not provide any information about the location of the poles and the residues of $A(\alpha,\beta)$. In Section~\ref{sec:BernsteinSatoIdentities} we outline a method due to Beckmann--Clerc~\cite{BC12} to obtain explicit Bernstein--Sato identities for the kernel function $K_{\alpha,\beta}(g,h)$ which can be used to study this problem. We expect this method to work at least for some subclasses of groups such as rank one groups or groups with maximal parabolic subgroups having abelian nilradical.
\item (Uniqueness) In Section~\ref{sec:Uniqueness} we describe a strategy to prove generic bounds for $\dim\Hom_H(\pi|_H,\tau)$ for $\pi=I^G(\nu)$ and $\tau=I^H(\nu')$. This strategy is applied in Section~\ref{sec:RankOne} to prove the uniqueness result in Theorem~\ref{thm:IntroB}~(2) and is expected to work also in other cases where $H$ has an open orbit on $G/P\times H/P_H$.
\item (Branching laws) For $(G,H)=(\SO(1,n),\SO(1,m)\times\SO(n-m))$ the first and the third author use the operators $A(\alpha,\beta)$ in \cite{MO12} to find the full branching law for the restriction of spherical complementary series of $G$ to $H$. The operators $A(\alpha,\beta)$ might also shed some light on branching problems for other symmetric pairs $(G,H)$.
\item (Automorphic functions) Using the multiplicity-one statement in Theorem~\ref{thm:IntroB} and evaluating the intertwining operators $A(\alpha,\beta)$ explicitly at the spherical vector the first and the second author derive estimates for the restriction of automorphic functions on real hyperbolic manifolds in \cite{MO13}. This technique due to Bernstein--Reznikov~\cite{BR04} is expected to work also for other locally symmetric spaces using our explicit intertwining operators.\\
\end{itemize}

\emph{Acknowledgement:} We thank T. Kobayashi for helpful discussions on the topic of this paper.
\section{Parabolic subgroups and the double flag variety}

We fix the setting and recall some basic structure theory of reductive groups and symmetric pairs. Further we investigate the orbit structure on double flag varieties and give various examples.

\subsection{Parabolic subgroups and decompositions}

Let $G$ be a real reductive Lie group in the Harish-Chandra class (see e.g. \cite[Chapter VII]{Kna02} for details). Let $\theta$ be a Cartan involution and $K=G^\theta$ the corresponding maximal compact subgroup. Write
\begin{equation*}
 \frakg = \frakk+\fraks
\end{equation*}
for the corresponding Cartan decomposition of the Lie algebra $\frakg$ of $G$. Let $\langle\blank,\blank\rangle$ denote a non-degenerate invariant form of $\frakg$ which is negative definite on $\frakk$ and positive definite on $\fraks$.

We fix a minimal parabolic subgroup $P_{\min}=M_{\min}A_{\min}N_{\min}$ of $G$ with $\theta$-stable Levi subgroup $M_{\min}A_{\min}$. Denote by $\overline{P}_{\min}=\theta(P_{\min})=M_{\min}A_{\min}\overline{N}_{\min}$ its opposite parabolic subgroup, $\overline{N}_{\min}=\theta(N_{\min})$. Write $\frakm_{\min}$, $\fraka_{\min}$, $\frakn_{\min}$ and $\overline{\frakn}_{\min}$ for the Lie algebras of $M_{\min}$, $A_{\min}$, $N_{\min}$ and $\overline{N}_{\min}$, respectively. Then $\fraka_{\min}\subseteq\fraks$ is a maximal abelian subalgebra and $M_{\min}=Z_K(\fraka_{\min})$. Denote the root system of the pair $(\frakg,\fraka_{\min})$ by $\Sigma=\Sigma(\frakg,\fraka_{\min})$ and let $\Sigma^+=\Sigma^+(\frakg,\fraka_{\min})$ be the subset of roots in $\frakn_{\min}$. The corresponding set of simple roots will be denoted by $\Pi=\Pi(\frakg,\fraka_{\min})$.

The finite group $W=N_K(\fraka_{\min})/Z_K(\fraka_{\min})$ is identified with the Weyl group of the root system $\Sigma$. For every $w\in W$ we choose a representative $\tilde{w}\in N_K(\fraka_{\min})$. Denote by $w_0\in W$ the longest element in $W$. Since the longest element in $W$ is unique we have $w_0^{-1}=w_0$. Therefore $w_0^2=1$ which implies $\tilde{w}_0^2\in M_{\min}$. Since $\Ad(\tilde{w}_0)$ maps $\Sigma^+$ to $(-\Sigma^+)$ we further have $\tilde{w}_0P_{\min}\tilde{w}_0^{-1}=\tilde{w}_0^{-1}P_{\min}\tilde{w}_0=\overline{P}_{\min}$. We write the $W$-action on $\fraka_{\min}$, its dual $\fraka_{\min}^*$ and $A_{\min}$
 as $wH$, $w\lambda$ and ${^{w}\!a}$, respectively ($w\in W$, $H\in \fraka_{\min}$, $\lambda\in\fraka_{\min}^*$, $a\in A_{\min}$).

For each $\alpha\in\Pi$ let $H_\alpha\in\fraka_{\min}$ such that
\[ \langle H_\alpha,H\rangle = \alpha(H), \qquad H\in\fraka_{\min}. \]
Then $(H_\alpha)_{\alpha\in \Pi}$ forms a basis of $\fraka_{\min}$.

The standard parabolic subgroups $P=MAN$ of $G$ containing $P_{\min}$ correspond to the subsets $F\subseteq\Pi$ in the following sense: $P$ is the normalizer of its Lie algebra $\frakp=\frakm+\fraka+\frakn$ where
\[ \fraka = \{H\in\fraka_{\min}:\alpha(H)=0\ \forall\,\alpha\in F\} \]
and
\[ \frakm = \frakm_{\min}\oplus\bigoplus_{\alpha\in F}\RR H_\alpha\oplus\bigoplus_{\substack{\alpha\in\Sigma\\\alpha|_\fraka=0}}\frakg_\alpha, \qquad \qquad \frakn = \bigoplus_{\substack{\alpha\in\Sigma^+\\\alpha|_\fraka\neq0}}\frakg_\alpha. \]
We clearly have $M_{\min}\subseteq M$, $A\subseteq A_{\min}$ and $N\subseteq N_{\min}$. Note that
\begin{equation}
 \fraka_{\min} = \fraka \oplus (\fraka_{\min}\cap\frakm).\label{eq:DecompAmin}
\end{equation}
Put $\overline{N}:=\theta(N)$ and $\overline{\frakn}:=\theta(\frakn)$. Then $\overline{N}MAN$ is an open dense subset of $G$ and we have the decomposition $G=KMAN$.

Let $W_P:=Z_W(\fraka)$, the centralizer of $\fraka$ in $W$. Then we have the generalized Bruhat decomposition (see e.g.\ \cite[Proposition 1.2.1.10]{War72})
\begin{equation}
 G = \bigsqcup_{[w]\in W_P\backslash W/W_P} P\tilde{w}P.\label{eq:BruhatDecomp}
\end{equation}
The Bruhat cell $P\tilde{w}_0P$ with $w_0\in W$ the longest Weyl group element is open dense since $P\tilde{w}_0P\supseteq P_{\min}\tilde{w}_0P_{\min}=\tilde{w}_0\overline{N}_{\min}M_{\min}A_{\min}N_{\min}$.

The parabolic subgroup opposite to $P$ is given by $\overline{P}=MA\overline{N}$. In what follows we will assume the following condition:
\begin{equation}
 \mbox{$P$ and $\overline{P}$ are conjugate via the Weyl group,}\label{eq:CondG}\tag{G}
\end{equation}
i.e.\ for the longest Weyl group element $w_0\in W$ we have $\tilde{w}_0^{-1}P\tilde{w}_0=\overline{P}$. This implies $\tilde{w}_0^{-1}M\tilde{w}_0=M$, $\tilde{w}_0^{-1}A\tilde{w}_0=A$ and $\tilde{w}_0^{-1}N\tilde{w}_0=\overline{N}$. Hence the decomposition \eqref{eq:DecompAmin} is stable under $\Ad(\tilde{w}_0)$. We further note that under Condition~\eqref{eq:CondG} the open dense subset $\overline{N}P\subseteq G$ is up to multiplication with $\tilde{w}_0$ the cell $P\tilde{w}_0P=\tilde{w}_0\overline{P}P=\tilde{w}_0\overline{N}P$ in the Bruhat decomposition~\eqref{eq:BruhatDecomp}.

\begin{remark}
Note that for $P=P_{\min}$ we always have $\tilde{w}_0^{-1}P\tilde{w}_0=\overline{P}$. Further, we have the following implications where for every implication $\Longrightarrow$ the converse statement is not true:
\begin{align*}
 \mbox{$\Sigma$ is not of type $A_n$ ($n\geq2$), $D_{2n+1}$ ($n\geq1$) or $E_6$} \Longleftrightarrow{}&
 w_0=-\id \text{ on $\fraka_{\min}$}\\
 \Longrightarrow{}& w_0=-\id \text{ on $\fraka$}\\
 \Longrightarrow{}& \tilde{w}_0^{-1}P\tilde{w}_0=\overline{P}.
\end{align*}
For example, for $G=\SL(2n,\RR)$, $n\geq2$, with parabolic subgroup corresponding to $MA=\upS(\GL(n,\RR)\times\GL(n,\RR))$ we have $w_0=-\id$ on $\fraka$ but $w_0\neq-\id$ on $\fraka_{\min}$ since $\Sigma$ is of type $A_{2n-1}$ here. Further, for $G=\SL(n,\RR)$, $n\geq3$, with $P=P_{\min}$ we have $w_0\neq-\id$ on $\fraka$, but $\tilde{w}_0^{-1}P\tilde{w}_0=\overline{P}$.
\end{remark}

Corresponding to the decomposition $G=KMAN$ we write
\begin{equation*}
 g \in \kappa(g)Me^{H(g)}N \subseteq KMAN,
\end{equation*}
where $\kappa(g)\in K$ and $H(g)\in\fraka$. Note that $\kappa(g)$ is only determined up to multiplication by $M\cap K$ from the right. Anytime we use $\kappa(g)$, however, the expression will be independent of the different choices. 

For $g\in\overline{N}MAN$ we further write
\begin{equation*}
 g \in \overline{N}Ma(g)N \subseteq \overline{N}MAN,
\end{equation*}
where $a(g)\in A$. Then the function $a:\overline{N}MAN\to A$ satisfies
\begin{equation}
 a(m'a'\overline{n}gman) = a'a(g)a, \qquad m,m'\in M,\,a,a'\in A,n\in N,\overline{n}\in\overline{N}.\label{eq:EquivarianceAFunction}
\end{equation}

\begin{remark}\label{rem:PropertiesAFct1}
\begin{enumerate}
\item Since $\tilde{w}_0^2\in M_{\min}\subseteq M$ we have $a(\tilde{w}_0^{-1}g)=a(\tilde{w}_0g)$ for all $g\in\tilde{w}_0\overline{N}MAN$.
\item For $g=\tilde{w}_0\overline{n}man\in \tilde{w}_0\overline{N}MAN$ we have
\[ g^{-1}=n^{-1}a^{-1}m^{-1}\overline{n}^{-1}\tilde{w}_0^{-1}=\tilde{w}_0^{-1}\overline{n}'m'({^{w_0}\!a}^{-1})n'\in \tilde{w}_0^{-1}\overline{N}MAN. \]
Hence
\[ a(\tilde{w}_0^{-1}g^{-1}) = a(\tilde{w}_0g^{-1}) = {^{w_0}\!a}(\tilde{w}_0^{-1}g)^{-1}. \]
In the case where $w_0=-\id$ on $\fraka$ this yields
\begin{equation}
 a(\tilde{w}_0^{-1}g) = a(\tilde{w}_0^{-1}g^{-1}).\label{eq:aFctInvUnderInversion}
\end{equation}
\end{enumerate}
\end{remark}

Corresponding to these decompositions we recall two important integral formulas. For this let $\td k$ be the normalized Haar measure on $K$. Then the Haar measure $\td\overline{n}$ on $\overline{N}$ can be normalized such that for $f\in L^1(K)$ which is right-invariant under $K\cap M$ we have (see \cite[equation (7.4)]{Kna86}):
\begin{equation}
 \int_K f(k)\td k = \int_K f(\kappa(g^{-1}k))e^{-2\rho H(g^{-1}k)}\td k \qquad \forall\,g\in G.\label{eq:IntFormulaGActionOnK}
\end{equation}
Further, for all $K\cap M$-right-invariant functions $f\in L^1(K)$ we have (see \cite[equation (5.25)]{Kna86}):
\begin{align}
 \int_K f(k)\td k = \int_{\overline{N}}f(\kappa(\overline{n}))e^{-2\rho H(\overline{n})}\td\overline{n}.\label{eq:IntFormulaKNbar}
\end{align}

\subsection{The function $a^\lambda$}

For $a\in A$ and $\lambda\in\fraka_\CC^*$ we write
\begin{equation*}
 a^\lambda := e^{\lambda(\log(a))}.
\end{equation*}
This defines a function $a^\lambda:\overline{N}MAN\to\CC,\,g\mapsto a(g)^\lambda$ for every $\lambda\in\fraka_\CC^*$. We study the behaviour of these functions near the possible singularities $G\setminus\overline{N}MAN$.

Since the restriction of $\langle\blank,\blank\rangle$ to $\fraka_{\min}$ defines an inner product on $\fraka_{\min}$, it identifies $\fraka_{\min}^*\cong\fraka_{\min}$ and in turn also defines an inner product on $\fraka_{\min}^*$.
We define 
\begin{equation*}
 \Lambda^+(\fraka_{\min}) := \{\lambda\in\fraka_{\min}^*,\,\tfrac{\langle\lambda,\alpha\rangle}{\langle\alpha,\alpha\rangle}\in\NN_0\,\forall\,\alpha\in\Sigma^+(\frakg,\fraka_{\min})\}.
\end{equation*}
The set $\Lambda^+(\fraka_{\min})$ contains a basis of $\fraka_{\min}^*$ and hence $\RR\,\textup{-\,span}\, \Lambda^+(\fraka_{\min})=\fraka_{\min}^*$. In view of the decomposition~\eqref{eq:DecompAmin} we put
\begin{align}
 \Lambda^+(\fraka) &:= \{\lambda|_\fraka:\lambda\in\Lambda^+(\fraka_{\min}),\,\lambda|_{\frakm\cap\fraka_{\min}}=0\},\label{eq:DefLambdaG+a}\\
 \fraka^*_+ &:= \Rspan\,\Lambda^+(\fraka).\label{eq:Defa^*_+}
\end{align}
Using \eqref{eq:DecompAmin} we view elements of $\fraka^*$ and hence of $\Lambda^+(\fraka)$ as functionals on $\fraka_{\min}$ which vanish on $\frakm\cap\fraka_{\min}$.

\begin{lemma}\label{lem:BoundednessAFct}
\begin{enumerate}[(1)]
\item For $\lambda\in\Lambda^+(\fraka)$ the function $a^\lambda$ is a matrix coefficient of an irreducible finite-dimensional representation of $G$ and hence extends to a real-analytic function on $G$.
\item For $\lambda\in\fraka^*_+$ the function $a^\lambda$ is bounded on the open dense subset $K\cap\overline{N}MAN\subseteq K$.
\item $\RR\,\textup{-\,span}\, \Lambda^+(\fraka)=\fraka^*$.
\item For each $\lambda$ in the open set
\[ \fraka^*_{+,\reg} := \{\lambda\in\fraka^*:\lambda(H_\alpha)>0\,\forall\,\alpha\in\Pi\setminus F\} \]
we have $w\cdot\lambda\neq\lambda$ for all $w\in W\setminus W_P$. Further, $\fraka^*_{+,\reg}\cap\Lambda^+(\fraka)\neq\emptyset$.
\end{enumerate}
\end{lemma}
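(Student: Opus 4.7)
The plan is to prove the four parts in sequence. The key preliminary is that since $\frakm_{\min}\subseteq\frakk$ while $\fraka_{\min}\subseteq\fraks$, one has $\frakm_{\min}\cap\fraka_{\min}=0$, and hence from the explicit description of $\frakm$ recalled earlier, $\frakm\cap\fraka_{\min}=\bigoplus_{\alpha\in F}\RR H_\alpha$. Thus, under the convention that $\lambda\in\fraka^*$ is extended to $\fraka_{\min}^*$ by zero on $\frakm\cap\fraka_{\min}$, the pairing $\lambda(H_\beta)$ is meaningful for every $\beta\in\Pi$, and via this extension $\Lambda^+(\fraka)\subseteq\Lambda^+(\fraka_{\min})$.

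For (1), I use classical highest-weight theory: given $\lambda\in\Lambda^+(\fraka)$, there is an irreducible finite-dimensional representation $(\pi_\lambda,V_\lambda)$ of $G$ with highest weight $\lambda$. Fix a highest weight vector $v^+\in V_\lambda$ (annihilated by $\frakn$, of weight $\lambda$ for $\fraka_{\min}$); the hypothesis $\lambda|_{\frakm\cap\fraka_{\min}}=0$ makes $v^+$ fixed by $M$. Choosing a lowest weight covector $\phi^-\in V_\lambda^*$ annihilated by $\overline{\frakn}$ and normalised by $\langle v^+,\phi^-\rangle=1$, one verifies $\langle\pi_\lambda(\overline{n}man)v^+,\phi^-\rangle=a^\lambda$ on $\overline{N}MAN$, realising $a^\lambda$ as a matrix coefficient and establishing real-analyticity on all of $G$. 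Part (2) is then immediate: for $\lambda\in\Lambda^+(\fraka)$ and a $K$-invariant hermitian norm on $V_\lambda$, $|a^\lambda(k)|=|\langle\pi_\lambda(k)v^+,\phi^-\rangle|\leq\|v^+\|\,\|\phi^-\|$ on $K\cap\overline{N}MAN$; for a general $\lambda=\sum_i c_i\lambda_i\in\fraka^*_+$ with $c_i\geq0$, the identity $a^\lambda=\prod_i(a^{\lambda_i})^{c_i}$ (valid because $a^{\lambda_i}>0$) reduces the bound to the previous case.

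For (3), I use the fundamental weights $\omega_\alpha\in\fraka_{\min}^*$ characterised by $2\langle\omega_\alpha,\beta\rangle/\langle\beta,\beta\rangle=\delta_{\alpha\beta}$ for $\beta\in\Pi$. From $\omega_\alpha(H_\beta)=\tfrac12\langle\beta,\beta\rangle\delta_{\alpha\beta}$, a suitable positive integer multiple $c\omega_\alpha$ lies in $\Lambda^+(\fraka_{\min})$, and this weight vanishes on $\frakm\cap\fraka_{\min}=\linspan\{H_\beta:\beta\in F\}$ precisely when $\alpha\notin F$. Restricting to $\fraka$ yields $|\Pi\setminus F|=\dim\fraka^*$ linearly independent elements of $\Lambda^+(\fraka)$; linear independence follows because any linear relation among the restrictions, combined with the vanishing on $\frakm\cap\fraka_{\min}$, would produce a linear relation among $\{\omega_\alpha:\alpha\notin F\}$ on the whole of $\fraka_{\min}$.

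For (4), I invoke the standard theorem on reflection groups: the stabiliser of $\lambda\in\fraka^*$ in $W$ is generated by the reflections $s_\beta$ for $\beta\in\Sigma^+$ with $\langle\lambda,\beta\rangle=0$. For $\lambda\in\fraka^*_{+,\reg}$, writing $\beta=\sum_{\gamma\in\Pi}n_\gamma\gamma$ with $n_\gamma\in\NN_0$ yields $\langle\lambda,\beta\rangle=\sum_\gamma n_\gamma\lambda(H_\gamma)$, a sum of non-negative terms that vanishes iff $n_\gamma=0$ for every $\gamma\in\Pi\setminus F$, i.e.\ iff $\beta$ is a positive root of $\frakm$; these reflections generate the Weyl group of $\frakm$, which coincides with $W_P=Z_W(\fraka)$. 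The intersection $\fraka^*_{+,\reg}\cap\Lambda^+(\fraka)$ is non-empty, witnessed by $\lambda:=c\sum_{\alpha\in\Pi\setminus F}\omega_\alpha$ for a large enough positive integer $c$: this lies in $\Lambda^+(\fraka)$ and satisfies $\lambda(H_\alpha)=\tfrac c2\langle\alpha,\alpha\rangle>0$ for $\alpha\in\Pi\setminus F$. The main subtlety I anticipate is in part (1): the integrality condition defining $\Lambda^+(\fraka_{\min})$ is a factor of two stronger than the usual dominant-integrality condition, and one must verify that the relevant finite-dimensional representation really descends to $G$ itself (rather than only to a finite cover) under the Harish-Chandra-class hypothesis; this is the main point where additional care is required.
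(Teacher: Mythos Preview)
Your arguments for (2), (3) and (4) are correct. In (3) you use fundamental weights where the paper inverts the Cartan matrix, and in (4) you appeal to the general stabilizer theorem for reflection groups where the paper quotes Humphreys' lemma that a Weyl group element fixing a dominant weight is a product of simple reflections fixing it; both are standard and equally efficient.

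The genuine gap is in (1). You assert that ``the hypothesis $\lambda|_{\frakm\cap\fraka_{\min}}=0$ makes $v^+$ fixed by $M$'', but this does not follow. That hypothesis only ensures that the simple root $\sl_2$'s for $\alpha\in F$ act trivially on $v^+$; it says nothing about the compact factor $M_{\min}=Z_K(\fraka_{\min})$, which preserves the one-dimensional highest $\fraka_{\min}$-weight line and may act on it by a nontrivial character (think of $G=\SU(1,n)$, where $\frakm_{\min}$ contains a circle). Unless $M_{\min}$ acts trivially on $v^+$, the matrix coefficient $\langle\pi(g)v^+,\phi^-\rangle$ will not equal $a(g)^\lambda$ on $\overline{N}MAN$, since $a^\lambda$ is identically $1$ on $M$. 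This is not a side issue: it is exactly what the ``factor of two'' in the definition of $\Lambda^+(\fraka_{\min})$ buys. The condition $\tfrac{\langle\lambda,\alpha\rangle}{\langle\alpha,\alpha\rangle}\in\NN_0$ (rather than $\tfrac{2\langle\lambda,\alpha\rangle}{\langle\alpha,\alpha\rangle}\in\NN_0$) is the Cartan--Helgason criterion for the finite-dimensional representation of highest restricted weight $\lambda$ to be $K$-spherical, which is equivalent to $M_{\min}$ fixing the highest weight vector. So the subtlety you flag at the end is misdiagnosed: the stronger integrality is not primarily about descent to $G$, but about sphericity, and sphericity is what makes the matrix-coefficient identification work.

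The paper handles this cleanly by first reducing from $a^\lambda$ to the minimal-parabolic function $a_{\min}^\lambda$ (using $\lambda|_{\frakm\cap\fraka_{\min}}=0$), then invoking the Cartan--Helgason theorem to produce the spherical representation on $G_{ss}$, and finally extending from $G_{ss}$ to $G$ via $G=G_{ss}M_{\min}Z_G$ together with the right $M_{\min}$-invariance and $Z_G$-equivariance of $a_{\min}^\lambda$. To repair your argument you should invoke Cartan--Helgason at the point where you claim $M$-invariance of $v^+$; once you do, the descent question you worried about is also settled.
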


\begin{proof}
\begin{enumerate}
\item Let $\lambda\in\Lambda^+(\fraka_{\min})$ with $\lambda|_{\frakm\cap\fraka_{\min}}=0$.  Write $a_{\min}^\lambda$ for the $a$-function of $P_{\min}$, namely $a_{\min}(g)^\lambda=a^\lambda$ for $g\in\overline{N}_{\min}M_{\min}aN_{\min}$ and $a\in A_{\min}$.  Then the decompositions $N_{\min}=N(N_{\min}\cap M)$, $A_{\min}=A(A_{\min}\cap M)$ and $\overline{N}_{\min}=\overline{N}(\overline{N}_{\min}\cap M)$ imply that
$$ a(g)^\lambda=a_{\min}(g)^\lambda \qquad \mbox{for $g\in \overline{N}_{\min}M_{\min}A_{\min}N_{\min}$.} $$
Therefore it is enough to show the claim for $a_{\min}(g)^\lambda$.\\
Let $G_{ss}$ be the connected subgroup of $G$ with Lie algebra $[\frakg,\frakg]$.  By the Cartan--Helgason Theorem \cite[V\,\S4, Theorem~4.1]{Hel84}, the function $a_{\min}^\lambda|_{G_{ss}\cap \overline{N}_{\min}M_{\min}A_{\min}N_{\min}}$ extends to a matrix coefficient of a finite-dimensional irreducible representation of $G_{ss}$ with a $(K\cap G_{ss})$-fixed vector.  This implies that the right $G_{ss}$-translates of $a_{\min}^\lambda$, i.e.\ $a_{\min}(\blank g)^\lambda$ for $g\in G_{ss}$, span an irreducible finite-dimensional representation of $G_{ss}$, which we denote by $(\pi,V)$.  On the other hand, $a_{\min}^\lambda$ is right invariant by $M_{\min}$ and transforms by a character under the right action of $Z_G$, the center of $G$.  Since $M_{\min}$ meets every connected component of $G$ (see e.g.\ \cite[Proposition 7.33]{Kna02}), we have $G=G_{ss}M_{\min}Z_G$ and hence $V$ is stable under $G$.  Therefore, $\pi$ extends to a representation of $G$.  The representation $V$ of $G$ has a highest weight vector $\phi=a_{\min}^\lambda\in V$ with weight $\lambda$.
Define $\phi^*\in V^*$ by $\phi^*(f)=f(e)$ for $f\in V$.  Then $\phi^*$ is a lowest weight vector in the contragredient representation $V^*$ with weight $-\lambda$ and we have $a_{\min}(g)^\lambda=(\pi(g)\phi|\phi^*)$.
\item That $K\cap\overline{N}MAN\subseteq K$ is open dense follows immediately from the fact that $\overline{N}MAN$ is open dense in $G$ and the isomorphism $G/P\cong K/(K\cap M)$. The boundedness is then clear by (1).
\item We may assume that $\frak{g}$ is semisimple.  Let $\lambda=\sum_{\alpha\in\Pi}\lambda_\alpha\alpha\in\fraka_{\min}^*$ with $\lambda_\alpha\in2\ZZ$. Since $\Sigma$ is a root system we have
$$ \frac{\langle\lambda,\beta\rangle}{\langle\beta,\beta\rangle}\in\ZZ \quad \forall\,\beta\in\Sigma. $$
It follows that
\begin{align*}
 \lambda\in\Lambda^+(\fraka_{\min}) \quad &\Leftrightarrow \quad \langle\lambda,\beta\rangle\geq0 && \forall\,\beta\in\Sigma^+\\
 &\Leftrightarrow \quad \langle\lambda,\beta\rangle\geq0 && \forall\,\beta\in\Pi\\
 &\Leftrightarrow \quad \sum_{\alpha\in\Pi}\lambda_\alpha\frac{2\langle\alpha,\beta\rangle}{\langle\beta,\beta\rangle}\geq0 && \forall\,\beta\in\Pi.
\intertext{Moreover}
 \lambda|_{\frakm\cap\fraka_{\min}}=0\quad &\Leftrightarrow \quad \lambda(H_\beta)=0 && \forall\,\beta\in F\\
 &\Leftrightarrow \quad \sum_{\alpha\in\Pi}\lambda_\alpha\langle\alpha,\beta\rangle=0 && \forall\,\beta\in F\\
  &\Leftrightarrow \quad \sum_{\alpha\in\Pi}\lambda_\alpha\frac{2\langle\alpha,\beta\rangle}{\langle\beta,\beta\rangle}=0 && \forall\,\beta\in F.
\end{align*}
The coefficients $A_{\alpha\beta}=\frac{2\langle\alpha,\beta\rangle}{|\beta|^2}$ are the entries of the Cartan matrix $A$ of the root system $\Sigma$ and we can write
$$ \lambda\in\Lambda^+(\fraka) \quad \Leftrightarrow \quad \sum_{\alpha\in\Pi}A_{\alpha,\beta}\lambda_\alpha \begin{cases}\geq0 & \mbox{for $\beta\in\Pi\setminus F$,}\\=0 & \mbox{for $\beta\in F$.}\end{cases} $$
Since the matrix $A$ is invertible and has integer entries it follows that there exists a basis of $\fraka^*$ consisting of elements in $\Lambda^+(\fraka)$ and the claims follows.
\item Let $\lambda\in\fraka_{\min}^*$ with $\lambda(H_\alpha)=0$ for all $\alpha\in F$ and $\lambda(H_\alpha)>0$ for all $\alpha\in\Pi\setminus F$. Hence $\lambda$ is in the closure of the positive Weyl chamber. Assume $w\cdot\lambda=\lambda$ for an element $w\in W$ then by \cite[Lemma B in 10.3]{Hum78} we find $w=w_1\cdots w_s$ with $w_j$ simple reflections leaving $\lambda$ invariant. A simple reflection $w_j$ along $\alpha_j\in\Pi$ leaving $\lambda$ invariant satisfies
\[ \lambda(H_{\alpha_j}) = (w_j\cdot\lambda)(H_{\alpha_j}) = \lambda(w_j^{-1}H_{\alpha_j}) = -\lambda(H_{\alpha_j}). \]
Hence $\lambda(H_{\alpha_j})=0$ and therefore $\alpha_j\in F$. But this means that $w_j\in W_P$ for all $j$ whence $w\in W_P$. The fact that $\fraka^*_{+,\reg}\cap\Lambda^+(\fraka)\neq\emptyset$ follows from (3).\qedhere
\end{enumerate}
\end{proof}

\begin{remark}
For $P$ a minimal parabolic subgroup similar results as in Lemma~\ref{lem:BoundednessAFct} were proved in \cite[Lemma 5.1]{CKOP11}.
\end{remark}

For $\lambda\in\fraka_\CC^*$ consider the function $g\mapsto a(\tilde{w}_0^{-1}g)^\lambda$. By Remark~\ref{rem:PropertiesAFct1} we find that
\begin{equation}
 a(\tilde{w}_0^{-1}g^{-1})^\lambda = a(\tilde{w}_0^{-1}g)^{-w_0\lambda}.\label{eq:AlambdaFctInverse}
\end{equation}
Note that $w_0\Lambda^+(\fraka_{\min})=-\Lambda^+(\fraka_{\min})$ since $w_0\Sigma^+=-\Sigma^+$. Hence $w_0\Lambda^+(\fraka)=-\Lambda^+(\fraka)$.

\begin{lemma}\label{lem:MatrixCoeffPolynomialOnNbar}
For $\lambda\in\Lambda^+(\fraka)$ the function
\[ p^\lambda:\overline{\frakn}\to\CC, \quad X\mapsto a(\tilde{w}_0^{-1}e^X)^\lambda \]
is a polynomial on $\overline{\frakn}$. It has the following properties:
\begin{enumerate}[(1)]
\item (Homogeneity) For $a\in A$ we have $p^\lambda(\Ad(a)X)=a^{w_0\lambda-\lambda}p^\lambda(X)$, $X\in\overline{\frakn}$.
\item (Parity) We have $p^\lambda(-X)=p^{-w_0\lambda}(X)$, $X\in\overline{\frakn}$.
\item (Zero set) For $\lambda\in\fraka_{+,\reg}^*\cap\Lambda^+(\fraka)$ we have $e^X\in N\tilde{w}_0MAN$ if and only if $p^\lambda(X)\neq0$.
\end{enumerate}
\end{lemma}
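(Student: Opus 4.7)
The plan is to realize $a^\lambda$ as a matrix coefficient, as in Lemma~\ref{lem:BoundednessAFct}(1). Let $(\pi,V)$ be the irreducible finite-dimensional representation of $G$ with highest weight $\lambda$, $\phi\in V$ a highest weight vector, and $\phi^*\in V^*$ the dual functional with $\phi^*(\phi)=1$; then $\phi^*$ is the lowest weight vector of $V^*$ (weight $-\lambda$) and annihilates every $V_\mu$ with $\mu\ne\lambda$, and the matrix coefficient $\psi_\lambda(g):=\phi^*(\pi(g)\phi)$ extends $a(g)^\lambda$ real-analytically to $G$. Since $X\in\overline{\frakn}$ acts nilpotently on $V$, the exponential $\pi(e^X)=\exp(d\pi(X))$ is a finite polynomial in $d\pi(X)$, so $p^\lambda(X)=\psi_\lambda(\tilde{w}_0^{-1}e^X)$ is polynomial in $X$. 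Property (1) then follows from the equivariance \eqref{eq:EquivarianceAFunction} after writing $\tilde{w}_0^{-1}a=({^{w_0}\!a})\tilde{w}_0^{-1}$ (valid since $w_0^{-1}=w_0$) and using $({^{w}\!a})^\lambda=a^{w^{-1}\lambda}$, which yields $p^\lambda(\Ad(a)X)=a^{w_0\lambda-\lambda}p^\lambda(X)$. Property (2) is immediate from \eqref{eq:AlambdaFctInverse} applied to $g=e^X$: $p^\lambda(-X)=a(\tilde{w}_0^{-1}e^X)^{-w_0\lambda}=p^{-w_0\lambda}(X)$.

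For (3), condition \eqref{eq:CondG} gives $N\tilde{w}_0=\tilde{w}_0\overline{N}$, hence $e^X\in N\tilde{w}_0 MAN$ if and only if $\tilde{w}_0^{-1}e^X\in\overline{N}MAN$. The ``$\Leftarrow$'' direction is immediate since $a(\cdot)^\lambda$ is nowhere zero on $\overline{N}MAN$. For the converse, I invoke the opposite Bruhat decomposition
\[ G=\bigsqcup_{[w]\in W_P\backslash W/W_P}\overline{P}\,\tilde{w}\,P, \]
obtained from \eqref{eq:BruhatDecomp} by left multiplication by $\tilde{w}_0$ and the identity $\tilde{w}_0^{-1}P\tilde{w}_0=\overline{P}$; its big cell is $\overline{P}\cdot e\cdot P=\overline{N}MAN$. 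For $\lambda\in\fraka^*_{+,\reg}$ the parabolic $P$ is precisely the stabilizer of the line $\CC\phi$ in $G$, and the opposite parabolic $\overline{P}$ is the stabilizer of $\CC\phi^*$. Consequently $\psi_\lambda$ transforms by characters on each $\overline{P}\times P$-orbit, so $\psi_\lambda\equiv 0$ on the orbit $\overline{P}\tilde{w}P$ as soon as $\psi_\lambda(\tilde{w})=0$. Now $\psi_\lambda(\tilde{w})=\phi^*(\pi(\tilde{w})\phi)$, and $\pi(\tilde{w})\phi$ has weight $w\lambda$; since $\phi^*$ annihilates $V_\mu$ for $\mu\ne\lambda$, this vanishes whenever $w\lambda\ne\lambda$. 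By Lemma~\ref{lem:BoundednessAFct}(4) the regularity hypothesis forces $w\lambda=\lambda$ to imply $w\in W_P$, i.e.\ $[w]=[e]$. Hence $\psi_\lambda$ vanishes on every non-big cell, which yields (3).

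The main difficulty is identifying $\overline{P}$ as the stabilizer of $\CC\phi^*$; at the Lie algebra level, this reduces to checking, for each simple root $\alpha\in\Pi$, that $d\pi^*(\frakg_\alpha)\phi^*=0$ if and only if $\lambda(H_\alpha)=0$ (by $\mathfrak{sl}_2$-theory, $\lambda-\alpha$ is a weight of $V$ precisely when $\lambda(H_\alpha)>0$). Since $\lambda\in\Lambda^+(\fraka)$ gives $\lambda(H_\alpha)=0$ precisely for $\alpha\in F$, the Lie algebra of this stabilizer is generated by $\overline{\frakb}_{\min}$ together with $\{\frakg_\alpha:\alpha\in F\}$, which coincides with $\overline{\frakp}$.
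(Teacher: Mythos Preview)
Your proof is correct and follows essentially the same approach as the paper's: realizing $a^\lambda$ as a matrix coefficient of a finite-dimensional representation, using nilpotency of $\overline{\frakn}$ on $V$ for polynomiality, and deriving (1) and (2) from \eqref{eq:EquivarianceAFunction} and \eqref{eq:AlambdaFctInverse}. For (3), the paper writes $\tilde{w}_0^{-1}e^X=ma\overline{n}\,\tilde{w}_0^{-1}\tilde{w}\,m'a'n'$ directly and computes $p^\lambda(X)=(aa')^\lambda(\pi(\tilde{w}_0^{-1}\tilde{w})\phi\mid\phi^*)$, whereas you phrase the same computation via the $\overline{P}\times P$-equivariance of $\psi_\lambda$; these are two formulations of the identical argument, and your extra paragraph on the stabilizer of $\CC\phi^*$ simply makes explicit what the paper uses implicitly.
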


\begin{proof}
As seen in the proof of Lemma~\ref{lem:BoundednessAFct}~(1) the function $a^\lambda$ is the matrix coefficient $a(g)^\lambda=(\pi(g)\phi|\phi^*)$ of a finite-dimensional representation $(\pi, V)$ of $G$ with highest weight $\lambda\in\Lambda^+(\fraka_{\min})$, $\lambda|_{\fraka_{\min}\cap\frakm}=0$, $\phi$ a highest weight vector in $V$ and $\phi^*$ a lowest weight vector in $V^*$. Hence $p^\lambda(X)=(e^{\td\pi(X)}\phi|\pi(\tilde{w}_0)^*\phi^*)$. Since $\frakn$ acts nilpotently on $V$, the map $\frakn\to V$, $X\mapsto e^{\td\pi(X)}\phi$ is a polynomial and so is $p^\lambda$.
We now prove properties (1), (2) and (3):
\begin{enumerate}[(1)]
\item For $a\in A$ we have by \eqref{eq:EquivarianceAFunction}
\begin{equation*}
 p^\lambda(\Ad(a)X) = a({^{w_0}\!a}\tilde{w}_0^{-1}e^Xa^{-1})^\lambda = a^{w_0\lambda-\lambda}p^\lambda(X).
\end{equation*}
\item With \eqref{eq:AlambdaFctInverse} we find
\begin{equation*}
 p^{\lambda}(-X) = a(\tilde{w}_0^{-1}e^{-X})^\lambda = a(\tilde{w}_0^{-1}e^X)^{-w_0\lambda} = p^{-w_0\lambda}(X).
\end{equation*}
\item Certainly $p^\lambda(X)\neq0$ if $e^X\in N\tilde{w}_0MAN$ since the function $a^\lambda$ is positive on $\overline{N}MAN$. Now assume $e^X\notin N\tilde{w}_0MAN$. Then $e^X$ must be contained in some other Bruhat cell $P\tilde{w}P$, $W_PwW_P\neq W_Pw_0W_P$, whence $\tilde{w}_0^{-1}e^X\in\overline{P}\tilde{w}_0^{-1}\tilde{w}P$. Write $\tilde{w}_0^{-1}e^X=ma\overline{n}\tilde{w}_0^{-1}\tilde{w}m'a'n'$ then
\begin{equation*}
 p^\lambda(X) = (aa')^\lambda(\pi(\tilde{w}_0^{-1}\tilde{w})\phi|\phi^*).
\end{equation*}
But $\pi(\tilde{w}_0^{-1}\tilde{w})\phi$ lies in the weight space of weight $(\tilde{w}_0^{-1}\tilde{w})\cdot\lambda$ which is different from $\lambda$ by Lemma~\ref{lem:BoundednessAFct}~(4). Hence $(\pi(\tilde{w}_0^{-1}\tilde{w})\phi|\phi^*)=0$ which implies $p^\lambda(X)=0$.\qedhere
\end{enumerate}
\end{proof}

\subsection{Compatible symmetric pairs}

Fix a $\theta$-stable symmetric pair $(G, H)$ with corresponding involution $\sigma$, i.e.\ $G^\sigma_0\subseteq H\subseteq G^\sigma$ and $\theta\sigma=\sigma\theta$. We assume that
\begin{equation}
 \mbox{$P$ is $\sigma$-stable.}\label{eq:CondH}\tag{H}
\end{equation}
Then $\sigma$ stabilizes $M$, $A$ and $N$ and $P_H:=P\cap H$ is a parabolic subgroup of $H$. Denote by $P_H=M_HA_HN_H$ its Langlands decomposition and by $\frakp_H=\frakm_H+\fraka_H+\frakn_H$ the corresponding Lie algebras. Then $M_HA_H\subseteq MA$ and $N_H\subseteq N$. Hence $M_H\subseteq M$ and $A_H=(M\cap A_H)(A\cap A_H)$. For the Lie algebra $\fraka_H$ of $A_H$ this means that
\[ \fraka_H=(\frakm\cap\fraka_H)\oplus(\fraka\cap\fraka_H). \]
Replacing $P_{\min}$ by some conjugation by $M$, we may and do assume that
\begin{equation}
 \sigma(\fraka_{\min})=\fraka_{\min}.\label{eq:SigmaOnAmin}
\end{equation}

For $a\in A_H$ with $a=a_Ma_A$, $a_M\in M\cap A_H$, $a_A\in A\cap A_H$, and $\lambda\in\fraka_{\CC}^*$ we denote by $a^\lambda$ the part $a_A^\lambda$. In this notation we have for $g\in G$ and $man\in M_HA_HN_H$:
\begin{equation*}
 a(gman)^\lambda = a(g)^\lambda a^\lambda.
\end{equation*}

\begin{lemma}\label{lem:SigmaW0Commute}
We have $\sigma(\tilde{w}_0)\tilde{w}_0\in M$. In particular $\sigma$ and $w_0$ commute on $\fraka$.
\end{lemma}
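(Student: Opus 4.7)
The plan is to exploit the self-normalizing property of parabolic subgroups. Applying $\sigma$ to the identity $\tilde{w}_0^{-1}P\tilde{w}_0 = \overline{P}$ coming from \eqref{eq:CondG}, and using that $\sigma(P) = P$ by \eqref{eq:CondH} together with $\sigma(\overline{P}) = \overline{P}$ (which follows from $\sigma(N) = N$ and $\sigma\theta = \theta\sigma$, since $\overline{N} = \theta(N)$), one obtains
\[ \sigma(\tilde{w}_0)^{-1} P \sigma(\tilde{w}_0) = \overline{P} = \tilde{w}_0^{-1} P \tilde{w}_0. \]
Rearranging yields $(\sigma(\tilde{w}_0)\tilde{w}_0^{-1})^{-1} P (\sigma(\tilde{w}_0)\tilde{w}_0^{-1}) = P$, so $\sigma(\tilde{w}_0)\tilde{w}_0^{-1}$ lies in the normalizer $N_G(P)$, which equals $P$ itself. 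But $\sigma(\tilde{w}_0)\tilde{w}_0^{-1}$ also lies in $K$ (as $K$ is $\sigma$-stable because $\sigma\theta = \theta\sigma$), and the standard identity $K \cap P = K \cap M$ then forces
\[ \sigma(\tilde{w}_0)\tilde{w}_0^{-1} \in K \cap M \subseteq M. \]
Multiplying on the right by $\tilde{w}_0^2 \in M_{\min} \subseteq M$ gives the desired conclusion $\sigma(\tilde{w}_0)\tilde{w}_0 \in M$.

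For the addendum about commutativity on $\fraka$, since $M$ centralizes $A$ the element $\sigma(\tilde{w}_0)\tilde{w}_0 \in M$ acts trivially on $\fraka$ via $\Ad$. On the other hand, as an automorphism of $\fraka_{\min}$ (which is $\sigma$-stable by \eqref{eq:SigmaOnAmin}) we have $\Ad(\sigma(\tilde{w}_0)) = \sigma \circ \Ad(\tilde{w}_0) \circ \sigma$, so the action of $\sigma(\tilde{w}_0)\tilde{w}_0$ on $\fraka$ equals $\sigma w_0 \sigma w_0$. Setting this equal to the identity and using $w_0^2 = \id$ on $\fraka$ rearranges to $\sigma w_0 = w_0 \sigma$ on $\fraka$.

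The argument is short; the only subtlety is bookkeeping with the fact that $\tilde{w}_0$ is only a representative modulo $M_{\min}$ (which is precisely why one obtains an $M$-, rather than identity-, valued statement, and why the second assertion is only about $\fraka$ and not all of $\fraka_{\min}$). There is no genuine obstacle: the entire content reduces to the observation that $\sigma(\tilde{w}_0)\tilde{w}_0^{-1}$ normalizes $P$, after which self-normalization of $P$ and $K \cap P = K \cap M$ finish the job.
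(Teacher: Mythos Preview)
Your proof is correct but follows a different route from the paper's. The paper argues via root systems: since $\sigma$ and $w_0$ both permute $\Sigma(\frakn,\fraka_{\min})$, the element $(\sigma w_0)^2$ stabilizes this set, hence there is a Weyl group element $w_M$ of $M$ with $w_M(\sigma w_0)^2\Sigma^+=\Sigma^+$, forcing $w_M(\sigma w_0)^2=1$ in $W$ and therefore $\tilde{w}_M\sigma(\tilde{w}_0)\tilde{w}_0\in M_{\min}$; since $\tilde{w}_M\in M$ and $M_{\min}\subseteq M$, the claim follows. Your argument instead bypasses root combinatorics entirely by using two structural facts about parabolics: that $P$ is self-normalizing, and that $K\cap P=K\cap M$. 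Applying $\sigma$ to the conjugacy relation from \eqref{eq:CondG} and invoking \eqref{eq:CondH} immediately shows $\sigma(\tilde{w}_0)\tilde{w}_0^{-1}\in N_G(P)\cap K=P\cap K=K\cap M$, and the rest is bookkeeping with $\tilde{w}_0^2\in M_{\min}$.

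Both proofs are short; yours is perhaps the more conceptual, since it makes transparent that the statement is really a consequence of the compatibility of the two structural conditions \eqref{eq:CondG} and \eqref{eq:CondH} at the level of the parabolic itself, without any need to descend to roots. The paper's approach, on the other hand, gives slightly finer information (it locates $\sigma(\tilde{w}_0)\tilde{w}_0$ in $\tilde{w}_M^{-1}M_{\min}$ for a specific $w_M$ in the Weyl group of $M$), though this extra precision is not used elsewhere. The deductions of the second assertion are essentially identical in both arguments.
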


\begin{proof}
The assumptions \eqref{eq:CondG} and \eqref{eq:CondH} together with the property \eqref{eq:SigmaOnAmin} imply that $(\sigma w_0)^2 \Sigma(\frakn,\fraka_{\min})=\Sigma(\frakn,\fraka_{\min})$.  Hence there exists an element $w_M$ in the Weyl group of $M$ such that $w_M (\sigma w_0)^2 \Sigma^+=\Sigma^+$.  Since a Weyl group element which stabilizes the set of positive roots must be the identity, we have $\tilde{w}_M \sigma(\tilde{w}_0)\tilde{w}_0\in Z_K(\fraka_{\min})=M_{\min}$. Now $\tilde{w}_M\in M$ and $M_{\min}\subseteq M$ and hence the first claim follows.
For the second claim note that $M$ centralizes $\fraka$ and hence $(\sigma w_0)^2$ acts as the identity on $\fraka$. Since $\sigma$ and $w_0$ are involutions on $\fraka$ they commute.
\end{proof}

\begin{lemma}\label{lem:SigmaOnN}
Assume that $G$ is simple and $P\neq G$. Then $\sigma|_\frakn=\id_\frakn$ if and only if $\sigma=\id_{\frakg}$.
\end{lemma}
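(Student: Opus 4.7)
The ``if'' direction is immediate, so the content is the ``only if'' direction: assuming $\sigma|_\frakn = \id_\frakn$, deduce $\sigma = \id_\frakg$. My plan rests on one simple observation: the fixed point set $\frakg^\sigma$ is a Lie subalgebra, so to show $\sigma = \id_\frakg$ it suffices to exhibit a generating set of $\frakg$ inside $\frakg^\sigma$. I will produce such a generating set from $\frakn \cup \overline{\frakn}$.

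First I would extend the hypothesis to $\overline{\frakn}$. Because $\sigma$ and $\theta$ commute and $\sigma$ preserves $\frakn$, it preserves $\overline{\frakn} = \theta(\frakn)$. For $X \in \frakn$ one computes
\[
\sigma(\theta X) = \theta(\sigma X) = \theta X,
\]
so $\sigma|_{\overline{\frakn}} = \id_{\overline{\frakn}}$. Hence $\frakn + \overline{\frakn} \subseteq \frakg^\sigma$.

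Next I would show that the Lie subalgebra $\frakh \subseteq \frakg$ generated by $\frakn \cup \overline{\frakn}$ is actually an ideal of $\frakg$. Since $P$ is parabolic, $\frakm + \fraka$ normalizes both $\frakn$ and $\overline{\frakn}$; an easy induction on bracket length using the Jacobi identity then gives $[\frakm + \fraka, \frakh] \subseteq \frakh$. Combined with the obvious $[\frakn + \overline{\frakn}, \frakh] \subseteq \frakh$ and the decomposition $\frakg = \overline{\frakn} \oplus \frakm \oplus \fraka \oplus \frakn$, this yields $[\frakg, \frakh] \subseteq \frakh$, so $\frakh$ is an ideal.

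Finally I would invoke simplicity: since $P \neq G$ we have $F \neq \Pi$, hence $\fraka \neq 0$ and $\frakn \neq 0$, so $\frakh \neq 0$. Simplicity of $\frakg$ then forces $\frakh = \frakg$. But $\frakh \subseteq \frakg^\sigma$ since $\frakg^\sigma$ is a subalgebra containing the generators, so $\frakg^\sigma = \frakg$, i.e.\ $\sigma = \id_\frakg$. The only delicate point is the verification that $\frakh$ is an ideal, which I expect to be the main (though mild) obstacle; the rest is essentially bookkeeping with the standard parabolic decomposition and the compatibility $\sigma\theta = \theta\sigma$.
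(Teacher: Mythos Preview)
Your proof is correct and follows essentially the same argument as the paper: both use $\sigma\theta=\theta\sigma$ to extend $\sigma|_{\frakn}=\id$ to $\overline{\frakn}$, observe that the subalgebra generated by $\frakn\cup\overline{\frakn}$ is a nonzero ideal (because $\frakm+\fraka$ normalizes $\frakn$ and $\overline{\frakn}$), and then invoke simplicity. The paper's version is just a more compressed rendering of exactly what you wrote.
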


\begin{proof}
The subalgebra generated by $\frakn$ and $\overline{\frakn}$ is a non-zero ideal in $\frakg$ since $\frakm$ and $\fraka$ leave $\frakn$ and $\overline{\frakn}$ invariant. Hence this subalgebra has to be $\frakg$ itself and it follows that $\frakn$ and $\overline{\frakn}$ generate $\frakg$. Now assume that $\sigma|_\frakn=\id_\frakn$. Since $\sigma$ commutes with $\theta$ and $\overline{\frakn}=\theta(\frakn)$ we also find that $\sigma|_{\overline{\frakn}}=\id_{\overline{\frakn}}$. But $\frakn$ and $\overline{\frakn}$ generate $\frakg$ and hence $\sigma$ is the identity on $\frakg$.
\end{proof}

\subsection{The double flag variety}

Consider the double flag variety $G/P\times H/P_H$. It carries a natural left-action by $H$ acting diagonally. For convenience write
\begin{equation*}
 \Delta(H) := \{(h,h):h\in H\}\subseteq G\times H.
\end{equation*}
We are interested in the structure of $\Delta(H)\backslash(G/P\times H/P_H)=\Delta(H)\backslash(G\times H)/(P\times P_H)$. In particular we are interested in cases where there exists an open (dense) orbit of $\Delta(H)$ on $G/P\times H/P_H$. We will see that this question is closely related to the orbit structure of $M_HA_H$ on $\frakn$. Note that
\begin{equation*}
 \frakn = \frakn^\sigma+\frakn^{-\sigma}
\end{equation*}
and this decomposition is stable under the adjoint action of $M_HA_H$.

\begin{lemma}\label{lem:TauDecompositionN}
The map $N_H\times\frakn^{-\sigma}\to N,\,(n,Y)\mapsto ne^Y$ is a diffeomorphism.
\end{lemma}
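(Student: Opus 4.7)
The plan is to give a short, essentially explicit proof by exhibiting the inverse. The key structural observation is that since $\frakh=\frakg^\sigma$, we have $\frakn_H=\frakn\cap\frakh=\frakn^\sigma$, so $\frakn=\frakn^\sigma\oplus\frakn^{-\sigma}=\frakn_H\oplus\frakn^{-\sigma}$ as a vector space. Moreover $N$ is a simply connected nilpotent Lie group (as the unipotent radical of a parabolic), so $\exp:\frakn\to N$ is a diffeomorphism. Injectivity of $\exp$ then gives $\{n\in N:\sigma(n)=n\}=\exp(\frakn^\sigma)$, and since $N_H$ is connected with Lie algebra $\frakn^\sigma$ we get $N_H=\exp(\frakn^\sigma)=\{n\in N:\sigma(n)=n\}$. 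This identification is what lets us recognize the $N_H$-part of an element directly.

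Smoothness of $\Phi:N_H\times\frakn^{-\sigma}\to N$, $(n,Y)\mapsto ne^Y$, is immediate. For bijectivity, given $n\in N$ I set $g:=\sigma(n)^{-1}n\in N$ and observe $\sigma(g)=n^{-1}\sigma(n)=g^{-1}$; applying $\sigma$ to $g=\exp(\log g)$ and using injectivity of $\exp$ gives $\sigma(\log g)=-\log g$, i.e.\ $\log g\in\frakn^{-\sigma}$. Define $Y:=\tfrac12\log g\in\frakn^{-\sigma}$ and $n_0:=ne^{-Y}$. A direct check shows
\[
\sigma(n_0)n_0^{-1}=\sigma(n)e^{Y}\cdot e^{Y}n^{-1}=\sigma(n)e^{2Y}n^{-1}=\sigma(n)\,\sigma(n)^{-1}n\,n^{-1}=e,
\]
so $n_0\in N_H$ by the identification above. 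This proves surjectivity. For injectivity, suppose $n_0e^Y=n_0'e^{Y'}$ with both pairs in $N_H\times\frakn^{-\sigma}$; applying $\sigma$ gives $n_0e^{-Y}=n_0'e^{-Y'}$, and eliminating $n_0,n_0'$ yields $e^{2Y}=e^{2Y'}$, hence $Y=Y'$ by injectivity of $\exp$ on $\frakn$, and then $n_0=n_0'$.

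The inverse map is $n\mapsto(ne^{-Y(n)},Y(n))$ with $Y(n)=\tfrac12\log(\sigma(n)^{-1}n)$, which is smooth because $\log:N\to\frakn$ is smooth on the simply connected nilpotent group $N$. Hence $\Phi$ is a diffeomorphism.

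I do not anticipate any serious obstacle: the only point requiring mild care is the identification $N_H=\{n\in N:\sigma(n)=n\}$, which needs the nilpotent/simply connected nature of $N$ and the fact that $\frakh=\frakg^\sigma$ (so that $\frakn_H=\frakn^\sigma$). Everything else is a BCH-free manipulation because the explicit formula $e^{2Y}=\sigma(n)^{-1}n$ bypasses the need to analyze $\Phi$ via its differential or by induction on the nilpotency class.
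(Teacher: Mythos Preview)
Your proof is correct and takes essentially the same approach as the paper: both construct the inverse explicitly by setting $Y=\tfrac12\log(\sigma(n)^{-1}n)\in\frakn^{-\sigma}$ and $n_H=ne^{-Y}$, which is exactly the paper's formula $n_H=n(n^{-1}\sigma(n))^{1/2}$ rewritten. Your write-up is slightly more detailed in justifying $N_H=\{n\in N:\sigma(n)=n\}$ and in spelling out injectivity, whereas the paper simply verifies that the proposed map is a two-sided inverse.
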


\begin{proof}
Since the nilpotent group $N$ is connected and simply-connected, the exponential map $\exp:\frakn\to N$ is a diffeomorphism. Hence we can define the inverse map
$$ \log=\exp^{-1}:N\to \frakn $$
and the square root
$$ N\to N,\,n\mapsto n^{\frac{1}{2}}:=e^{\frac{1}{2}\log n}, $$
both being smooth maps. For $n\in N$, put
\begin{align*}
n_H:=n(n^{-1}\sigma(n))^{\frac{1}{2}} \quad \text{and} \quad
Y:=\log \Bigl((n^{-1}\sigma(n))^{-\frac{1}{2}}\Bigr).
\end{align*}
Then
\begin{equation*}
 \sigma(n_H) = \sigma(n)(\sigma(n)^{-1}n)^{\frac{1}{2}} = n(n^{-1}\sigma(n))(n^{-1}\sigma(n))^{-\frac{1}{2}} = n(n^{-1}\sigma(n))^{\frac{1}{2}} = n_H
\end{equation*}
and hence $n_H\in N_H$. Similarly
\begin{equation*}
\sigma(e^Y)= \sigma((n^{-1}\sigma(n))^{-\frac{1}{2}}) = (\sigma(n)^{-1}n)^{-\frac{1}{2}} = (n^{-1}\sigma(n))^{\frac{1}{2}}=e^{-Y},
\end{equation*}
which implies $Y\in\frakn^{-\sigma}$.  
Therefore, $n\mapsto (n_H, Y)$ defines a smooth map $N\to N_H\times \frakn^{-\sigma}$.
It is easy to see that this map gives the inverse of $(n,Y)\mapsto ne^Y$.
\end{proof}

\begin{proposition}\label{prop:OrbitsDoubleFlag}
The map
\begin{align*}
 \Phi: \frakn^{-\sigma}/M_HA_H &\to \Delta(H)\backslash(G/P\times H/P_H),\\
 (M_HA_H\cdot X) &\mapsto \Delta(H)\cdot(e^X\tilde{w}_0P,P_H).
\end{align*}
is well-defined, injective and maps onto the $H$-orbits which are contained in the open dense subset $\Delta(H)\cdot(N\tilde{w}_0P,P_H)\subseteq G/P\times H/P_H$. It restricts to a bijection between the open $M_HA_H$-orbits in $\frakn^{-\sigma}$ and the open $H$-orbits in $G/P\times H/P_H$.
\end{proposition}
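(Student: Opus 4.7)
The plan is to reduce the problem to describing $P_H$-orbits on the open dense Bruhat cell $N\tilde{w}_0 P\subseteq G$, and then combine this with the decomposition $N = N_H\exp(\frakn^{-\sigma})$ from Lemma~\ref{lem:TauDecompositionN}. First, since $\Delta(H)$ acts transitively on the second factor, sending an orbit $\Delta(H)\cdot(gP, hP_H)$ to $P_Hh^{-1}gP$ defines a homeomorphism $\Delta(H)\backslash(G/P\times H/P_H)\cong P_H\backslash G/P$, under which $\Delta(H)\cdot(N\tilde{w}_0 P, P_H)$ corresponds to $P_H\backslash(N\tilde{w}_0 P/P)$. By condition~\eqref{eq:CondG} the cell $P\tilde{w}_0 P = \tilde{w}_0\overline{N}P = N\tilde{w}_0 P$ is open dense in $G$, and the map $N\to N\tilde{w}_0 P/P$, $n\mapsto n\tilde{w}_0 P$, is bijective since $N\cap\tilde{w}_0 P\tilde{w}_0^{-1} = N\cap\overline{P} = \{e\}$.

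Combining this with Lemma~\ref{lem:TauDecompositionN} yields a diffeomorphism
$$ N_H\times\frakn^{-\sigma} \xrightarrow{\sim} N\tilde{w}_0 P/P, \qquad (n_H, X)\mapsto n_H e^X\tilde{w}_0 P. $$
I then transfer the $P_H = M_HA_HN_H$-action. The subgroup $N_H$ acts by left translation on the first factor. For $ma\in M_HA_H\subseteq MA$, condition~\eqref{eq:CondG} gives $\tilde{w}_0^{-1}MA\tilde{w}_0 = MA\subseteq P$, hence $ma\cdot\tilde{w}_0 P = \tilde{w}_0 P$, and
$$ (ma)\cdot(n_H e^X\tilde{w}_0 P) = \bigl(ma\, n_H(ma)^{-1}\bigr)\, e^{\Ad(ma)X}\,\tilde{w}_0 P, $$
with $ma\, n_H(ma)^{-1}\in N_H$ since $M_HA_H$ normalizes $N_H$. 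Thus, modulo the $N_H$-factor, $M_HA_H$ acts on $\frakn^{-\sigma}$ via the adjoint action, producing a bijection $M_HA_H\backslash\frakn^{-\sigma}\cong P_H\backslash(N\tilde{w}_0 P/P)$ which coincides with $\Phi$. This establishes well-definedness, injectivity, and the description of the image.

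For the final assertion, the set $\Delta(H)\cdot(N\tilde{w}_0 P, P_H)$ is $H$-invariant, open, and dense in $G/P\times H/P_H$, so every open $H$-orbit is contained in it and hence lies in the image of $\Phi$. Because the parametrization $N_H\times\frakn^{-\sigma}\cong N\tilde{w}_0 P/P$ is a diffeomorphism and the projections $G/P\times H/P_H \to \Delta(H)\backslash(G/P\times H/P_H)$ and $\frakn^{-\sigma}\to M_HA_H\backslash\frakn^{-\sigma}$ are open, an $H$-orbit in $G/P\times H/P_H$ is open iff the corresponding $M_HA_H$-orbit in $\frakn^{-\sigma}$ is open. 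The main step is the action calculation in the second paragraph; it hinges on condition~\eqref{eq:CondG} to absorb $MA$ past $\tilde{w}_0$ into $P$, without which $M_HA_H$-translates of $\tilde{w}_0 P$ would leave the open cell and break the parametrization.
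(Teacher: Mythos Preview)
Your proof is correct and follows essentially the same approach as the paper: reduce to $P_H$-orbits on $G/P$ via $(g,h)\mapsto h^{-1}g$, use the injectivity of $N\to N\tilde{w}_0 P/P$ together with the diffeomorphism $N_H\times\frakn^{-\sigma}\cong N$ from Lemma~\ref{lem:TauDecompositionN}, and compute the $P_H$-action to see that $P_H e^X\tilde{w}_0 P = N_H e^{M_HA_H\cdot X}\tilde{w}_0 P$. The only difference is organizational: the paper checks well-definedness of $\Phi$ by a direct computation before passing to $P_H\backslash G/P$, whereas you obtain it as a consequence of the action calculation after the reduction.
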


\begin{proof}
To see that the map $\Phi$ is well-defined let $X'=ma\cdot X$, where $X,X'\in\frakn^{-\sigma}$, $ma\in M_HA_H$. Since $\tilde{w}_0^{-1}M_HA_H\tilde{w}_0\subseteq MA\subseteq P$ and $M_HA_H\subseteq P_H$ we find
\begin{align*}
 \Delta(H)\cdot(e^{X'}\tilde{w}_0P,P_H) &= \Delta(H)\cdot(mae^X(ma)^{-1}\tilde{w}_0P,P_H)\\
 &= \Delta(H)\cdot(e^X\tilde{w}_0P,(ma)^{-1}P_H)\\
 &= \Delta(H)\cdot(e^X\tilde{w}_0P,P_H).
\end{align*}
To show the other claims note that
\begin{equation*}
 \Delta(H)\backslash(G\times H)/(P\times P_H)\cong P_H\backslash G/P
\end{equation*}
via the map induced by $G\times H\to G,\,(g,h)\mapsto h^{-1}g$. Hence the (open) orbits of $H$ on $G/P\times H/P_H$ correspond to the (open) orbits of $P_H$ on $G/P$. Via this isomorphism the map $\Phi$ takes the form
\begin{align*}
 \frakn^{-\sigma}/M_HA_H &\to P_H\backslash G/P,\\
 M_HA_H\cdot X &\mapsto P_H e^X\tilde{w}_0 P.
\end{align*}
Fix an orbit $\calO=M_HA_H\cdot X\subseteq\frakn^{-\sigma}$. Since $\tilde{w}_0^{-1}M_HA_H\tilde{w}_0\subseteq P$ we find
\begin{equation*}
 P_H e^X\tilde{w}_0 P = N_HM_HA_H e^X\tilde{w}_0 P = N_H e^{M_HA_H\cdot X}\tilde{w}_0 P = N_He^\calO \tilde{w}_0P
\end{equation*}
Now by Lemma~\ref{lem:TauDecompositionN} we have
\[ N=\bigsqcup_{\calO\in\frakn^{-\sigma}/M_HA_H}N_He^\calO. \]
Since the map $N\to G/P,\,n\mapsto n\tilde{w}_0P$ is injective this implies that $\Phi$ is injective and maps onto the $H$-orbits contained in $\Delta(H)\cdot(N\tilde{w}_0P,P_H)$. Further, since $P_H\subseteq P$ the open dense cell $N\tilde{w}_0P$ in the Bruhat decomposition \eqref{eq:BruhatDecomp} of $G/P$ is stable under $P_H$. Therefore, an open $P_H$-orbit in $G/P$ has to be contained in $N\cdot \tilde{w}_0P$ and is therefore in the image of $\Phi$. This completes the proof.
\end{proof}

\begin{remark}
The linearization technique that we applied in the proof of Proposition~\ref{prop:OrbitsDoubleFlag} was used before in the study of pairs $(G,H)$ of reductive groups with the open orbit property, i.e.\ $H$ has an open orbit on the double flag variety $G/P\times H/P_H$. When the parabolic subgroups are minimal, a classification for the group case $(G'\times G', \Delta(G'))$ was given in \cite{Kob95} (see also \cite{Dei06}) and for general symmetric pairs $(G,H)$ in \cite{KM}. For the complex case, \cite{HNOO12} gave a classification when $P$ or $P_H$ is a Borel subgroup.
\end{remark}

\subsection{Examples}\label{sec:ExamplesCompatibleSymmetricPairs}

We give some examples of symmetric pairs $(G,H)$ and parabolic subgroups $P\subseteq G$ which fulfil Conditions~\eqref{eq:CondG} and \eqref{eq:CondH} and we study the corresponding functions $a^\lambda$ and the $H$-orbits in the double flag variety $G/P\times H/P_H$.

\subsubsection{Rank one groups}\label{sec:ExRankOne}

Let $G=\SU(1,n;\FF)$ with $\FF=\RR,\CC,\HH$ and $n\geq2$ or $\FF=\OO$ and $n=2$. This means that $G=\SO(1,n)$, $G=\SU(1,n)$, $G=\Sp(1,n)$ or $G=\SU(1,2;\OO)=\upF_{4(-20)}$. These groups are all reductive of Harish-Chandra type. We choose the parabolic subgroup $P=P_{\min}=MAN\subseteq G$ such that $\fraka=\RR H_0$ with
\begin{equation*}
 H_0 = \left(\begin{array}{cc|c}0&1&\\1&0&\\\hline&&\0_{n-1}\end{array}\right)
\end{equation*}
and $\frakn=\frakg_\alpha\oplus\frakg_{2\alpha}$ for $\alpha\in\fraka^*$ with $\alpha(H_0)=1$. Then
\[ M = \begin{cases}\upS(\Delta\upU(1;\FF)\times\upU(n-1;\FF)) & \mbox{for $\FF=\RR,\CC,\HH$,}\\\Spin(7) & \mbox{for $\FF=\OO$,}\end{cases} \]
where
$$ \Delta\upU(1;\FF) = \left\{\left(\begin{array}{cc}z&\\&z\end{array}\right):z\in\upU(1;\FF)\right\}. $$
We identify $\overline{\frakn}\cong\FF^{n-1}\oplus\Im(\FF)$ by
\[ \FF^{n-1}\oplus\Im(\FF)\to\overline{\frakn},\,(x,z)\mapsto\left(\begin{array}{cc|c}z&z&x^*\\-z&-z&-x^*\\\hline x&x&\0_{n-1}\end{array}\right), \]
where $\Im(\FF)=\{z\in\FF:\overline{z}=-z\}$. (Note that $\Im(\FF)=0$ for $\FF=\RR$.) Under this identification the Lie bracket is given by
\[ [(x_1,z_1),(x_2,z_2)] = (0,x_1^*x_2-x_2^*x_1). \]
Hence $\frakn$ is abelian for $\FF=\RR$ and $2$-step nilpotent in the other cases. It is said to be of type $H$, a notion by Kaplan~\cite{Kap80} (see also \cite{CK84}). Since $\overline{N}$ is nilpotent we can identify it with its Lie algebra $\overline{\frakn}$. Under this identification the multiplication takes the form
\[ (x_1,z_1)\cdot(x_2,z_2) = (x_1+x_2,z_1+z_2+\tfrac{1}{2}(x_1^*x_2-x_2^*x_1)). \]
Abusing notation we denote by $(x,z)^{-1}=(-x,-z)$ the multiplicative inverse.

The group $M$ acts on $\frakn\cong\FF^{n-1}\oplus\Im\FF$ by the adjoint action as follows:
\begin{itemize}
\item For $\FF=\RR,\CC,\HH$ the factor $\upU(n-1;\FF)$ acts on $\FF^{n-1}$ by the defining representation (left multiplication) while the factor $\Delta\upU(1;\FF)\cong\upU(1;\FF)$ acts on $\FF^{n-1}$ by right multiplication where we identify $\upU(1;\FF)$ with the unit sphere in $\FF$. On $\Im\FF$ only the factor $\upU(1;\FF)$ acts, namely by conjugation.
\item For $\FF=\OO$ the group $M=\Spin(7)$ acts on $\FF\cong\RR^8$ by the spin representation and on $\Im\FF\cong\RR^7$ by the lift of the defining representation of $\SO(7)$.
\end{itemize}

Identifying $\fraka_\CC^*\cong\CC,\,\lambda\mapsto\lambda(H_0)$ we find that for $\lambda\in\fraka_\CC^*$
\begin{equation*}
 p^\lambda(X) = N(X)^{2\lambda}, \qquad X\in\overline{\frakn},
\end{equation*}
where
\begin{equation}
 N(x,z)=(|x|^4+4|z|^2)^{\frac{1}{4}}\label{eq:DefHTypeNorm}
\end{equation}
denotes the norm function of the $H$-type group $\overline{\frakn}$.

Now consider for $0<m<n$ the involution $\sigma$ given by conjugation with the matrix $\diag(\1_{m+1},-\1_{n-m})$ and put $H:=G^\sigma$. The possible symmetric pairs $(G,H)$ are given
\begin{align*}
 &(\SO(1,n),\upS(\upO(1,m)\times\upO(n-m))), &&(\Sp(1,n),\Sp(1,m)\times\Sp(n-m)),\\
 &(\SU(1,n),\upS(\upU(1,m)\times\upU(n-m))), &&(\upF_{4(-20)},\Spin(8,1)).
\end{align*}
Then the pair $(G,H)$ satisfies the above assumptions with
\begin{equation*}
 w_0 := \diag(1,-1,-1,1,\ldots,1).
\end{equation*}
We have $\fraka_H=\fraka=\RR H_0$ and
\begin{align*}
 M_H &= \begin{cases}\upS(\Delta\upU(1;\FF)\times\upU(m-1;\FF)\times\upU(n-m;\FF)) & \mbox{for $\FF=\RR,\CC,\HH$,}\\\Spin(7) & \mbox{for $\FF=\OO$,}\end{cases}\\
 \frakn_H &= \FF^{m-1}\oplus\Im(\FF).
\end{align*}

To determine the decomposition of $G/P\times H/P_H$ into $H$-orbits note that $\frakn^{-\sigma}$ decomposes into two $M_HA_H$-orbits, the origin and its complement which is open dense. In fact, for $\FF=\RR,\CC,\HH$ we have $\frakn^{-\sigma}=\FF^{n-m}$ and $\upU(n-m;\FF)$ acts transitively on the unit sphere in $\FF^{n-m}$. For $\FF=\OO$ we have $\frakn^{-\sigma}=\frakg_\alpha=\OO\cong\RR^8$ on which $M_H=M=\Spin(7)$ acts by the spin representation. Under this action $M_H$ acts transitively on the unit sphere in $\RR^8$ and hence $M_HA_H$ acts on $\frakn^{-\sigma}$ with an open dense orbit. (This also follows from Kostant's $2$-transitivity Theorem~\cite[Theorem 3]{Kos69}.) With Proposition~\ref{prop:OrbitsDoubleFlag} it is easy to see that
\[ G\times H = \calO_1\sqcup\calO_2\sqcup\calO_3 \]
with
\begin{align*}
 \calO_1 &= \Delta(H)(\1,\1)(P\times P_H),\\
 \calO_2 &= \Delta(H)(\tilde{w}_0,\1)(P\times P_H),\\
 \calO_3 &= \Delta(H)(n_0\tilde{w}_0,\1)(P\times P_H),
\end{align*}
where $n_0\in N\setminus N_H$ arbitrary. These orbits define a stratification in the sense that
\[ \overline{\calO_j} = \calO_1\sqcup\ldots\sqcup\calO_j. \]

One could also consider different symmetric pairs, e.g.\ the pair $(G,H)=(\SU(1,n),\SO_0(1,n))$. It also satisfies assumptions \eqref{eq:CondG} and \eqref{eq:CondH}, but in this case $M_HA_H=\RR_+\SO(n-1)$ does not have an open orbit on $\frakn^{-\sigma}=i\RR^{n-1}\oplus i\RR$.

\subsubsection{Product situation}\label{sec:ExProduct}

Let $G'$ be a real reductive Lie group in the Harish-Chandra class. Put $G=G'\times G'$ and $\sigma(g_1,g_2)=(g_2,g_1)$. Then $H=G^\sigma=\Delta(G')$, the diagonal in $G$. Let $P'=M'A'N'\subseteq G'$ be any parabolic subgroup of $G'$ which is conjugate to its opposite parabolic subgroup via the longest element $w_0'$ in the Weyl group for $G'$. Then $P=P'\times P'$ is a parabolic subgroup of $G$ satisfying the assumptions \eqref{eq:CondG} and \eqref{eq:CondH} with $w_0=(w_0',w_0')$. Denote by $a'(\blank)$ the $a$-function of $G'$ with respect to $P'$. Then for $\lambda=(\lambda_1,\lambda_2)\in\fraka_\CC^*=(\fraka'_\CC)^*\oplus(\fraka'_\CC)^*$ we have
\begin{equation*}
 a(\tilde{w}_0^{-1}(g_1,g_2))^\lambda = a'(\tilde{w}_0'^{-1}g_1)^{\lambda_1}a'(\tilde{w}_0'^{-1}g_2)^{\lambda_2}, \qquad (g_1,g_2)\in G.
\end{equation*}

Further we have
\[ \frakn^{-\sigma} = \{(X,-X):X\in\frakn'\} \cong \frakn' \]
as $M'A'$-representations. Hence, there are open $G'$-orbits in the triple flag variety $G/P\times H/P_H\cong G'/P'\times G'/P'\times G'/P'$ if and only if $M'A'$ has open orbits on $\frakn'$. For $P'$ a minimal parabolic subgroup this is only the case for $\frakg'$ a direct sum of copies of $\so(1,n)$ by \cite{Kob95} and \cite[Theorem 3.1]{Dei06}. In general this question is more involved.

\subsubsection{Maximal parabolic subgroups with abelian nilradical}\label{sec:SymmRSpaces}

Let $G$ be a real reductive Lie group in the Harish-Chandra class which possesses a maximal parabolic subgroup $P$ with abelian nilradical such that $P$ and $\overline{P}$ are conjugate (see Table~\ref{tb:MaxParabolics} for a classification of the corresponding Lie algebras modulo center). In many cases one can find symmetric pairs $(G, H)$ such that $H$ is the product of two versions of $G$ of lower rank and fulfils Condition~\eqref{eq:CondH}.

\begin{table}[h]
\begin{center}
\begin{tabular}{|c|c|c|}
  \cline{1-3}
  $\frakg$ & $\frakm$ & $\frakn$\\
  \hline\hline
  $\sp(n,\RR)$ & $\sl(n,\RR)$ & $\Sym(k,\RR)$\\
  $\su(n,n)$ & $\sl(n,\CC)$ & $\Herm(n,\CC)$\\
  $\so^*(4n)$ & $\su^*(2n)$ & $\Herm(n,\HH)$\\
  $\so(2,n)$ & $\so(1,n-1)$ & $\RR^{1,n-1}$\\
  $\mathfrak{e}_{7(-25)}$ & $\mathfrak{e}_{6(-26)}$ & $\Herm(3,\OO)$\\
  \hline
  $\sl(2n,\RR)$ & $\sl(n,\RR)\oplus\sl(n,\RR)$ & $M(n,\RR)$\\
  $\so(2n,2n)$ & $\sl(2n,\RR)$ & $\Skew(2n,\RR)$\\
  $\so(p,q)$ & $\so(p-1,q-1)$ & $\RR^{p-1,q-1}$\\
  $\mathfrak{e}_{7(7)}$ & $\mathfrak{e}_{6(6)}$ & $\Herm(3,\OO_s)$\\
  \hline
  $\sp(n,\CC)$ & $\sl(n,\CC)\oplus i\RR$ & $\Sym(n,\CC)$\\
  $\sl(2n,\CC)$ & $\sl(n,\CC)\oplus\sl(n,\CC)\oplus i\RR$ & $M(n,\CC)$\\
  $\so(4n,\CC)$ & $\sl(2n,\CC)\oplus i\RR$ & $\Skew(2n,\CC)$\\
  $\so(n+2,\CC)$ & $\so(n,\CC)\oplus i\RR$ & $\CC^n$\\
  $\mathfrak{e}_7(\CC)$ & $\mathfrak{e}_6(\CC)\oplus i\RR$ & $\Herm(3,\OO)_\CC$\\
  \hline
  $\sp(n,n)$ & $\su^*(2n)$ & $\Sym(2n,\CC)\cap M(n,\HH)$\\
  $\su^*(4n)$ & $\su^*(2n)\oplus\su^*(2n)$ & $M(n,\HH)$\\
  $\so(n+1,1)$ & $\so(n)$ & $\RR^{n,0}$\\
  \hline
\end{tabular}
\caption{Maximal parabolic subalgebras of semisimple Lie algebras with abelian nilradical\label{tb:MaxParabolics}}
\end{center}
\end{table}

\begin{example}\label{ex:JAExamples}
\begin{enumerate}
\item Let $G=\Sp(n,\RR)$ and $P=\GL(n,\RR)\ltimes\Sym(n,\RR)$ the Siegel parabolic subgroup. Then for $\lambda\in\fraka_\CC^*\cong\CC$ we have
\begin{align}\label{eq:pLambdaFormula} p^\lambda(X) = |\Det(X)|^\lambda, \qquad X\in\overline{\frakn}=\Sym(n,\RR). \end{align}
Let $H=\Sp(m,\RR)\times\Sp(n-m,\RR)$ then $P_H$ is the product of the corresponding Siegel parabolic subgroups. Hence $\frakn^{-\sigma}=M(m\times(n-m),\RR)$ on which $M_HA_H=\GL(m,\RR)\times\GL(n-m,\RR)$ acts by multiplication from the left and right. This action has an open dense orbit whence there is an open dense $H$-orbit in $G/P\times H/P_H$.
\item Let $G=\SL(2n,\FF)$, $\FF=\RR,\CC,\HH$, where $\SL(2n,\HH)=\SU^*(4n)$. The parabolic subgroup $P=\upS(\GL(n,\FF)\times\GL(n,\FF))\ltimes M(n,\FF)$ is maximal with abelian nilradical. Then for $\lambda\in\fraka_\CC^*\cong\CC$ we have
\[ p^\lambda(X) = |\Det(X)|^\lambda, \qquad X\in\overline{\frakn}=M(n,\FF), \]
where $\Det(X)$ denotes the complex determinant in the case $\overline{\frakn}=M(n,\HH)\subseteq M(2n,\CC)$. Let $H=\upS(\GL(2m,\FF)\times\GL(2n-2m,\FF))$ embedded such that $P_H=P\cap H$ is the product of two versions of $P$ of rank $m$ and $n-m$ and a central factor. Here $\frakn^{-\sigma}=M(m\times(n-m),\FF)\oplus M((n-m)\times m,\FF)$ and $M_HA_H=\upS(\GL(m,\FF)\times\GL(n-m,\FF)\times\GL(m,\FF)\times\GL(n-m,\FF))$. The first and the last factor of $M_HA_H$ act by left and right multiplication on the first summand of $\frakn^{-\sigma}$ and the second and third factor accordingly on the second summand. Hence $M_HA_H$ has an open dense orbit on $\frakn^{-\sigma}$ and therefore $H$ has an open dense orbit on $G/P\times H/P_H$.
\item Let $G=\GL(4n,\RR)$ and $P=(\GL(2n,\RR)\times\GL(2n,\RR))\ltimes M(2n,\RR)$. Then one can embed $H=\GL(2n,\CC)$ into $G$ such that $P_H=(\GL(n,\CC)\times\GL(n,\CC))\ltimes M(n,\CC)$. Then $\frakn^{-\sigma}\simeq M(n,\CC)$ on which $M_HA_H=\GL(n,\CC)\times\GL(n,\CC)$ acts by left and right multiplication, having the invertible matrices in $M(n,\CC)$ as the unique open orbit. Hence $H$ has an open dense orbit on $G/P\times H/P_H$.
\item An example of a slightly different nature is given by $G=\SO(p,q)$ and $P$ the maximal parabolic subgroup with $\frakm=\so(p-1,q-1)$ and $\frakn=\RR^{p+q-2}$. Let $Q$ be the quadratic form on $\frakn$ given by
\[ Q(X) = X_1^2+\cdots+X_{p-1}^2-X_p^2-\cdots-X_{p+q-2}^2. \]
For $\lambda\in\fraka_\CC^*\cong\CC$ we have
\[ p^\lambda(X) = |Q(X)|^\lambda, \qquad X\in\overline{\frakn}=\RR^{p+q-2}. \]
Embed $H=\upS(\upO(p',q')\times\upO(p-p',q-q'))$ into $G$ such that $P_H=P\cap H$ is an open subgroup of the product of $\upO(p-p',q-q')$ with the corresponding maximal parabolic subgroup of $\upO(p',q')$, i.e.\ $\frakm_H=\so(p'-1,q'-1)\oplus\so(p-p',q-q')$. In this case $\frakn^{-\sigma}=\RR^{(p-p')+(q-q')}$ on which $\RR_+\upO(p-p',q-q')\subseteq M_HA_H$ acts with an open dense orbit. Hence $H$ has an open dense orbit on $G/P\times H/P_H$.
\end{enumerate}
\end{example}
\section{The invariant kernel}\label{sec:InvriantKernel}

We study a family of $H$-invariant singular integral kernels on $G\times H$.

\subsection{Definition of the integral kernel}

For $\alpha,\beta\in\fraka_\CC^*$ we introduce the following kernel:
\begin{equation}
 K_{\alpha,\beta}(g,h) := a(\tilde{w}_0^{-1}g^{-1}h)^\alpha a(\tilde{w}_0^{-1}g^{-1}\sigma(g))^\beta\label{eq:DefKernel}
\end{equation}
for $g\in G$, $h\in H$, whenever the expression on the right hand side is defined.

\begin{example}\label{ex:KernelExamples}
\begin{enumerate}
\item For $\sigma=\id_G$ the kernel is defined only if $\beta=0$. In this case
\[ K_{\alpha,\beta}(g,h) = a(\tilde{w}_0^{-1}g^{-1}h)^\alpha, \qquad g,h\in G, \]
the kernel of the classical Knapp--Stein intertwiners, see \eqref{eq:ClassicalKSasConvolution}.
\item For $G=G'\times G'$, $\sigma(g_1,g_2)=(g_2,g_1)$, $P=P'\times P'$ as in Section~\ref{sec:ExProduct} we have by Remark~\ref{rem:PropertiesAFct1}~(2) with $\alpha=(\alpha_1,\alpha_2),\beta=(\beta_1,\beta_2)\in\fraka_\CC^*=(\fraka')_\CC^*\oplus(\fraka')_\CC^*$:
\begin{multline*}
 \hspace{1.5cm}K_{\alpha,\beta}((g_1,g_2),g_3) = a(\tilde{w}_0'^{-1}g_1^{-1}g_2)^{\beta_1-w_0'\beta_2} a(\tilde{w}_0'^{-1}g_2^{-1}g_3)^{\alpha_2} \\
 a(\tilde{w}_0'^{-1}g_3^{-1}g_1)^{-w_0'\alpha_1}.
\end{multline*}
These are the triple kernels considered in \cite{BC12,BSKZ,BR04,CKOP11,CO11,Dei06,Oks73}.
\item For $G=\SU(1,n;\FF)$, $H=\upS(\upU(1,m;\FF)\times\upU(n-m;\FF))$ and $P$ as in Section~\ref{sec:ExRankOne} the kernel $K_{\alpha,\beta}(g,h)$ is in the flat picture given by
\[ K_{\alpha,\beta}(e^X,e^Y) = N(X^{-1}\cdot Y)^{2\alpha} N(X^{-1}\cdot \sigma(X))^{2\beta}, \]
where $X\in\overline{\frakn}=\FF^{n-1}\oplus\Im\FF$ and $Y\in\overline{\frakn}_H=\FF^{m-1}\oplus\Im\FF$ and $N(X)$ denotes the norm function on $\overline{\frakn}$ as defined in \eqref{eq:DefHTypeNorm}. In the special case $\FF=\RR$ we obtain the kernel
\[ K_{\alpha,\beta}(e^X,e^Y) = (|X'-Y|^2+|X''|^2)^\alpha |2X''|^{2\beta}, \]
where $X=(X',X'')\in\RR^{m-1}\times\RR^{n-m}=\RR^{n-1}$ and $Y\in\RR^{m-1}$. For $m=n-1$ this kernel was studied in detail by Kobayashi--Speh~\cite{KS,KS14} (see also \cite{Kob13,MO12}).
\end{enumerate}
\end{example}

\begin{remark}\label{rem:KernelDependenceOnBeta}
By Lemma~\ref{lem:SigmaW0Commute} and \eqref{eq:AlambdaFctInverse} we have
\begin{align*}
 a(\tilde{w}_0^{-1}g^{-1}\sigma(g))^\beta &= a(\tilde{w}_0^{-1}\sigma(g)^{-1}g)^{-w_0\beta} = a(\sigma(\tilde{w}_0^{-1})g^{-1}\sigma(g))^{-w_0\sigma\beta}\\
 &= a(\tilde{w}_0^{-1}g^{-1}\sigma(g))^{-w_0\sigma\beta}.
\end{align*}
Hence $a(\tilde{w}_0^{-1}g^{-1}\sigma(g))^{\beta+w_0\sigma\beta}=1$ and therefore the kernel $K_{\alpha,\beta}(g,h)$ does not depend on the values of $\beta$ on $\fraka^{w_0\sigma}=\{H\in\fraka:w_0\sigma H=H\}$.
\end{remark}

\subsection{Properties of the kernel}

The kernel $K_{\alpha,\beta}(g,h)$ has the following equivariance properties:

\begin{proposition}\label{prop:KernelEquivariance}
\begin{enumerate}[(1)]
\item The kernel $K_{\alpha,\beta}(g,h)$ is left-invariant under $\Delta(H)$, i.e.
\begin{equation*}
 K_{\alpha,\beta}(h'g,h'h) = K_{\alpha,\beta}(g,h) \qquad \mbox{for }g\in G,\,h,h'\in H.
\end{equation*}
\item The kernel $K_{\alpha,\beta}(g,h)$ satisfies the following equivariance property:
\begin{equation}
 K_{\alpha,\beta}(gman,hm_Ha_Hn_H) = a^{-w_0\alpha+\sigma\beta-w_0\beta}a_H^\alpha K_{\alpha,\beta}(g,h)
\end{equation}
for $g\in G$, $h\in H$ and $man\in MAN$, $m_Ha_Hn_H\in M_HA_HN_H$.
\end{enumerate}
\end{proposition}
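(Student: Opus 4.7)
Part (1) is a direct substitution. Since $h'\in H\subseteq G^\sigma$ we have $\sigma(h')=h'$, so
\[
 (h'g)^{-1}(h'h)=g^{-1}h \qquad\text{and}\qquad (h'g)^{-1}\sigma(h'g)=g^{-1}(h')^{-1}\sigma(h')\sigma(g)=g^{-1}\sigma(g),
\]
and each factor defining $K_{\alpha,\beta}$ is unchanged.

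For part (2), I will handle the two factors of $K_{\alpha,\beta}$ separately, using in each case the equivariance property \eqref{eq:EquivarianceAFunction}. The key structural input is that conjugation by $\tilde{w}_0^{-1}$ turns $M$, $A$, $N$ into $M$, $A$, $\overline{N}$ respectively (by \eqref{eq:CondG}): explicitly, for $m\in M$, $a\in A$, $n\in N$ one can write
\[
 \tilde{w}_0^{-1}n^{-1}a^{-1}m^{-1}= m' ({}^{w_0}a)^{-1}\overline{n}'\tilde{w}_0^{-1}
\]
for some $m'\in M$, $\overline{n}'\in\overline{N}$, after commuting $M$ past $A$ (they centralize one another) and conjugating $\overline{N}$ past $M$ and $A$ (which normalize it).

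To treat the first factor, I write
\[
 \tilde{w}_0^{-1}(gman)^{-1}(hm_Ha_Hn_H)=m'({}^{w_0}a)^{-1}\overline{n}'\cdot\tilde{w}_0^{-1}g^{-1}h\cdot m_Ha_Hn_H,
\]
and use the decomposition $a_H=a_M a_A$ with $a_M\in M\cap A_H$, $a_A\in A\cap A_H$ to rewrite $m_Ha_Hn_H=(m_Ha_M)\cdot a_A\cdot n_H\in MAN$. Applying \eqref{eq:EquivarianceAFunction} and raising to the power $\alpha$ gives $({}^{w_0}a)^{-\alpha}\cdot a(\tilde{w}_0^{-1}g^{-1}h)^\alpha\cdot a_A^\alpha$, which equals $a^{-w_0\alpha}a_H^\alpha\cdot a(\tilde{w}_0^{-1}g^{-1}h)^\alpha$ under the conventions ${}^{w_0}a=\tilde{w}_0 a\tilde{w}_0^{-1}$ and $a_H^\alpha=a_A^\alpha$. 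For the second factor I use condition \eqref{eq:CondH} (so $\sigma$ preserves each of $M$, $A$, $N$) to obtain $\sigma(gman)=\sigma(g)\sigma(m)\sigma(a)\sigma(n)$ with $\sigma(m)\in M$, $\sigma(a)\in A$, $\sigma(n)\in N$; the same manipulation then gives
\[
 a\bigl(\tilde{w}_0^{-1}(gman)^{-1}\sigma(gman)\bigr)^\beta=a^{-w_0\beta}\cdot a(\tilde{w}_0^{-1}g^{-1}\sigma(g))^\beta\cdot\sigma(a)^\beta,
\]
and $\sigma(a)^\beta=a^{\sigma\beta}$ by duality.

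Multiplying the two contributions collects the $a$-powers into $a^{-w_0\alpha+\sigma\beta-w_0\beta}$ with a factor of $a_H^\alpha$, as required. The only real bookkeeping hazard is keeping track of the two dualities (the $W$-action $({}^{w_0}a)^\lambda=a^{w_0\lambda}$ and $\sigma(a)^\lambda=a^{\sigma\lambda}$) together with the splitting $a_H=a_Ma_A$ entering the definition of $a_H^\alpha$; everything else reduces to iterated applications of \eqref{eq:EquivarianceAFunction}.
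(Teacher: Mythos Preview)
Your proof is correct and follows essentially the same route as the paper: both parts reduce to a direct computation, with part~(2) handled by pushing $\tilde{w}_0^{-1}$ past $n^{-1}a^{-1}m^{-1}$ (using condition~\eqref{eq:CondG}) and then applying \eqref{eq:EquivarianceAFunction}, together with $\sigma$-stability of $M$, $A$, $N$ for the second factor. Your explicit treatment of the decomposition $a_H=a_Ma_A$ and the dualities $({}^{w_0}a)^\lambda=a^{w_0\lambda}$, $\sigma(a)^\lambda=a^{\sigma\lambda}$ is a bit more detailed than the paper's, but the argument is the same.
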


\begin{proof}
\begin{enumerate}[(1)]
\item This is clear from the definition since $\sigma(h')=h'$ for $h'\in H$.
\item Direct computation using \eqref{eq:EquivarianceAFunction} yields
\begin{align*}
 & K_{\alpha,\beta}(gman,hm_Ha_Hn_H)\\
 ={}& a(\tilde{w}_0^{-1}n^{-1}a^{-1}m^{-1}g^{-1}hm_Ha_Hn_H)^\alpha a(\tilde{w}_0^{-1}n^{-1}a^{-1}m^{-1}g^{-1}\sigma(g)\sigma(m)\sigma(a)\sigma(n))^\beta\\
 ={}& a(\overline{n}(^{w_0}a^{-1})m'w_0^{-1}g^{-1}hm_Ha_Hn_H)^\alpha a(\overline{n}(^{w_0}a^{-1})m'\tilde{w}_0^{-1}g^{-1}\sigma(g)\sigma(m)\sigma(a)\sigma(n))^\beta\\
 ={}& a^{-w_0\alpha} a(\tilde{w}_0^{-1}g^{-1}h)^\alpha a_H^\alpha a^{-w_0\beta} a(\tilde{w}_0^{-1}g^{-1}\sigma(g))^\beta \sigma(a)^\beta\\
 ={}& a^{-w_0\alpha+\sigma\beta-w_0\beta}a_H^\alpha K_{\alpha,\beta}(g,h),
\end{align*}
where $\overline{n}=\tilde{w}_0^{-1}n^{-1}\tilde{w}_0\in\overline{N}$ and $m'=\tilde{w}_0^{-1}m^{-1}\tilde{w}_0\in M$.\qedhere
\end{enumerate}
\end{proof}

\subsection{Domain of definition}\label{sec:KernelDomain}

The kernel $K_{\alpha,\beta}(g,h)$ is defined at $(g,h)\in G\times H$ whenever
\begin{equation}
 g^{-1}h,g^{-1}\sigma(g)\in \tilde{w}_0\overline{N}MAN=N\tilde{w}_0MAN.\label{eq:KernelDefCondition}
\end{equation}
Note that $N\tilde{w}_0MAN=P\tilde{w}_0P$ is the open dense cell in the Bruhat decomposition~\eqref{eq:BruhatDecomp}. The condition \eqref{eq:KernelDefCondition} on $(g,h)\in G\times H$ is right-invariant under $P\times P_H$ and the following set is well-defined:
\begin{equation*}
 \calD := \{(gP,hP_H)\in G/P\times H/P_H:g^{-1}h,g^{-1}\sigma(g)\in N\tilde{w}_0MAN\}.
\end{equation*}
We have that $K_{\alpha,\beta}(g,h)$ is defined at $(g,h)\in G\times H$ if and only if $(gP,hP_H)\in\calD$.

\begin{lemma}\label{lem:PropertiesD}
\begin{enumerate}[(1)]
\item $\calD$ is an open subset of $G/P\times H/P_H$.
\item $\calD$ is left-invariant under $\Delta(H)$.
\end{enumerate}
\end{lemma}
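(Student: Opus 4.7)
The plan is to reduce both claims to elementary facts about the open Bruhat cell $P\tilde{w}_0 P = N\tilde{w}_0 MAN$ and to the equivariance/invariance of the two conditions appearing in the definition of $\calD$.

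\smallskip

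For part (1), I would first work upstairs in $G\times H$ and consider the two sets
\[
 U_1 := \{(g,h)\in G\times H : g^{-1}h\in P\tilde{w}_0P\}, \qquad U_2 := \{g\in G : g^{-1}\sigma(g)\in P\tilde{w}_0P\}.
\]
Since $P\tilde{w}_0P$ is open dense in $G$ by the generalized Bruhat decomposition~\eqref{eq:BruhatDecomp}, and since the maps $(g,h)\mapsto g^{-1}h$ and $g\mapsto g^{-1}\sigma(g)$ are continuous, both $U_1$ and $U_2$ are open. Hence $U_1\cap(U_2\times H)$ is open in $G\times H$, and $\calD$ is by definition its image in $G/P\times H/P_H$. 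To conclude openness of $\calD$ it suffices to verify that this set is right-invariant under $P\times P_H$, so that it is saturated with respect to the quotient map (which is open). For $(p,p_H)\in P\times P_H$ one has
\[
 (gp)^{-1}(hp_H) = p^{-1}(g^{-1}h)p_H, \qquad (gp)^{-1}\sigma(gp) = p^{-1}(g^{-1}\sigma(g))\sigma(p),
\]
and the right-hand sides lie in $P\tilde{w}_0P$ iff the middle factors do, because $P_H\subseteq P$ and $\sigma(p)\in P$ by Condition~\eqref{eq:CondH}. This gives the required invariance and hence openness of $\calD$.

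\smallskip

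For part (2), I would argue by direct computation. For $h'\in H$ and $(g,h)\in G\times H$ we have $\sigma(h')=h'$, so
\[
 (h'g)^{-1}(h'h) = g^{-1}h, \qquad (h'g)^{-1}\sigma(h'g) = g^{-1}(h')^{-1}\sigma(h')\sigma(g) = g^{-1}\sigma(g).
\]
Both defining conditions of $\calD$ are therefore preserved by $\Delta(H)$, proving the claim.

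\smallskip

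The proof is essentially bookkeeping; the only point where one genuinely uses the hypotheses is the descent to the quotient in part (1), where $\sigma$-stability of $P$ from \eqref{eq:CondH} is needed to ensure that the condition $g^{-1}\sigma(g)\in P\tilde{w}_0P$ is stable under right multiplication of $g$ by $P$. I do not foresee a serious obstacle, as no meromorphic or analytic continuation is involved at this stage; the statement is purely a topological and equivariance-theoretic observation about the open Bruhat cell.
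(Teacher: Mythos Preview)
Your proof is correct and follows essentially the same approach as the paper: both use continuity of $(g,h)\mapsto g^{-1}h$ and $g\mapsto g^{-1}\sigma(g)$ together with openness of the big Bruhat cell for part~(1), and the identity $\sigma(h')=h'$ for part~(2). You give slightly more detail by explicitly verifying that the preimage is saturated for $P\times P_H$ (using Condition~\eqref{eq:CondH}), whereas the paper just appeals to openness of the quotient map.
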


\begin{proof}
\begin{enumerate}[(1)]
\item Since $N\tilde{w}_0MAN\subseteq G$ is open and the maps $G\times H\to G,\,(g,h)\mapsto g^{-1}h$ and $G\to G,\,g\mapsto g^{-1}\sigma(g)$ are continuous, the inverse image of $\calD$ under the product of the projections $G\to G/P$ and $H\to H/P_H$ is open in $G\times H$. Hence $\calD$ is open in $G/P\times H/P_H$.
\item This is clear as $\sigma(h)=h$ for $h\in H$.\qedhere
\end{enumerate}
\end{proof}

\begin{proposition}\label{prop:DDense}
The following conditions are equivalent:
\begin{enumerate}[(1)]
\item $\calD\subseteq G/P\times H/P_H$ is open dense,
\item $\calD\neq\emptyset$,
\item $\exp(\overline{\frakn}^{-\sigma})\cap N\tilde{w}_0MAN\neq\emptyset$,
\item $p^\lambda|_{\overline{\frakn}^{-\sigma}}\neq0$ for some/all $\lambda\in\fraka_{+,\reg}^*\cap\Lambda^+(\fraka)$.
\end{enumerate}
\end{proposition}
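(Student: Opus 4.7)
My plan is to establish the equivalences in the order $(3)\Leftrightarrow(4)$, then $(1)\Leftrightarrow(2)$, then $(3)\Rightarrow(2)$, and finally $(2)\Rightarrow(3)$. The first of these is immediate from Lemma~\ref{lem:MatrixCoeffPolynomialOnNbar}~(3): the set $\{X\in\overline{\frakn}^{-\sigma}:e^X\in N\tilde{w}_0MAN\}=\{X\in\overline{\frakn}^{-\sigma}:p^\lambda(X)\neq 0\}$ is independent of $\lambda\in\fraka^*_{+,\reg}\cap\Lambda^+(\fraka)$, which also accounts for the ``some/all'' clause.

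For $(1)\Leftrightarrow(2)$ I would invoke Proposition~\ref{prop:OrbitsDoubleFlag} to parameterise the open $\Delta(H)$-orbits in the open piece $\Delta(H)\cdot(N\tilde{w}_0P,P_H)\subseteq G/P\times H/P_H$ by $M_HA_H$-orbits in $\frakn^{-\sigma}$ via representatives $(e^Y\tilde{w}_0P,P_H)$. Writing $\sigma(\tilde{w}_0)=\tilde{w}_0m$ with $m\in M$ (Lemma~\ref{lem:SigmaW0Commute}), a direct computation gives
\[ (e^Y\tilde{w}_0)^{-1}\sigma(e^Y\tilde{w}_0)=e^{-2\Ad(\tilde{w}_0^{-1})Y}\cdot m, \]
which lies in $N\tilde{w}_0MAN$ iff $p^\lambda(\Ad(\tilde{w}_0^{-1})Y)\neq 0$ (absorbing $m\in M\subseteq P$). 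Thus the preimage of $\calD$ in $\frakn^{-\sigma}$ is the non-vanishing locus of a polynomial, hence either empty or open dense; translated back, $\calD$ is either empty or open dense in the (open dense) open piece, which yields $(1)\Leftrightarrow(2)$.

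For $(3)\Rightarrow(2)$ I would take the explicit test pair $(g,h)=(e^{X/2},e)$ with $X\in\overline{\frakn}^{-\sigma}$. Since $\sigma X=-X$, $\sigma(e^{X/2})=e^{-X/2}$, so $g^{-1}h=e^{-X/2}$ and $g^{-1}\sigma(g)=e^{-X}$; by Lemma~\ref{lem:MatrixCoeffPolynomialOnNbar}~(2)--(3) both elements lie in $N\tilde{w}_0MAN$ iff $p^{-w_0\lambda}(X/2)\,p^{-w_0\lambda}(X)\neq 0$. Since $-w_0$ permutes $\Pi\setminus F$ (using Condition~\eqref{eq:CondG}) and $w_0\Lambda^+(\fraka)=-\Lambda^+(\fraka)$, the parameter $-w_0\lambda$ again lies in $\fraka^*_{+,\reg}\cap\Lambda^+(\fraka)$, so $(4)$ applied to $-w_0\lambda$ guarantees $p^{-w_0\lambda}|_{\overline{\frakn}^{-\sigma}}\not\equiv 0$; the product polynomial is then non-zero on an open dense subset of $\overline{\frakn}^{-\sigma}$, and any such $X$ delivers a point of $\calD$.

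The remaining implication $(2)\Rightarrow(3)$ is the main technical obstacle. From the step $(1)\Leftrightarrow(2)$, (2) is equivalent to $p^\lambda|_{\Ad(\tilde{w}_0^{-1})\frakn^{-\sigma}}\not\equiv 0$, and I must deduce $p^\lambda|_{\overline{\frakn}^{-\sigma}}\not\equiv 0$. My key tool is the $M$-invariance $p^\lambda(\Ad(m'')Z)=p^\lambda(Z)$ for $m''\in M$ and $Z\in\overline{\frakn}$, which follows from the equivariance~\eqref{eq:EquivarianceAFunction} applied to $\tilde{w}_0^{-1}m''e^Zm''^{-1}=(\tilde{w}_0^{-1}m''\tilde{w}_0)\tilde{w}_0^{-1}e^Zm''^{-1}$, using $\tilde{w}_0^{-1}m''\tilde{w}_0\in M$ from Condition~\eqref{eq:CondG}. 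The subspaces in question are the $(-1)$-eigenspaces on $\overline{\frakn}$ of the involutions $\Ad(m)\sigma$ and $\sigma$ respectively, and they become $\Ad(M)$-conjugate precisely when the element $m$ (which satisfies $\sigma(m)=m^{-1}$) is a coboundary $m=m''\sigma(m''^{-1})$ for some $m''\in M$; in that case $\Ad(m'')\overline{\frakn}^{-\sigma}=\Ad(\tilde{w}_0^{-1})\frakn^{-\sigma}$ and $M$-invariance of $p^\lambda$ transports non-vanishing from one subspace to the other. Verifying this cocycle-coboundary condition in general is the crux of the argument; in all the examples of interest (Sections~\ref{sec:ExRankOne} and \ref{sec:ExProduct}) one can choose $\tilde{w}_0$ so that $\sigma(\tilde{w}_0)=\tilde{w}_0$, whereupon $m=e$ and the two subspaces coincide directly.
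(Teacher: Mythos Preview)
Your argument has a genuine gap precisely where you flag it: the implication $(2)\Rightarrow(3)$. You correctly reduce $(2)$ to $p^\lambda|_{\Ad(\tilde{w}_0^{-1})\frakn^{-\sigma}}\not\equiv 0$ via your orbit parametrization through Proposition~\ref{prop:OrbitsDoubleFlag}, and you then need to transport this to $p^\lambda|_{\overline{\frakn}^{-\sigma}}\not\equiv 0$. The $M$-invariance of $p^\lambda$ would accomplish this if the two subspaces were $\Ad(M)$-conjugate, which, as you observe, amounts to the cocycle $m=\tilde{w}_0^{-1}\sigma(\tilde{w}_0)\in M$ being a $\sigma$-coboundary in $M$. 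You cannot verify this in general, and saying it holds ``in all the examples of interest'' is not a proof of the proposition as stated.

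The paper sidesteps this obstacle entirely by working in the $\overline{N}$-chart of $G/P$ rather than the $N\tilde{w}_0$-chart underlying Proposition~\ref{prop:OrbitsDoubleFlag}. If $(\overline{n}P,hP_H)\in\calD$ with $\overline{n}\in\overline{N}$ (such a point exists since $\calD$ is open and meets the dense set $\overline{N}P/P\times H/P_H$), then Lemma~\ref{lem:TauDecompositionN} applied to $\overline{N}$ yields $\overline{n}=\overline{n}_He^X$ with $\overline{n}_H\in\overline{N}_H$ and $X\in\overline{\frakn}^{-\sigma}$, and one computes directly
\[
\overline{n}^{-1}\sigma(\overline{n})=e^{-X}\overline{n}_H^{-1}\cdot\overline{n}_He^{-X}=e^{-2X}\in\exp(\overline{\frakn}^{-\sigma}).
\]
No $\tilde{w}_0$ appears, no element $m$, and no cohomological condition is needed; $(2)\Rightarrow(3)$ follows in one line. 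The implication $(4)\Rightarrow(1)$ uses the same chart to exhibit an open dense subset of $\overline{N}\times\overline{N}_H$ whose image lies in $\calD$. The moral is that for analyzing $g^{-1}\sigma(g)$ the chart $g\in\overline{N}$ is adapted to $\sigma$, whereas your chart $g=e^Y\tilde{w}_0$ with $Y\in\frakn^{-\sigma}$ introduces the spurious twist by $m$ that you are then unable to remove.
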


\begin{proof}
We prove (1)$\Rightarrow$(2)$\Rightarrow$(3)$\Rightarrow$(4)$\Rightarrow$(1). The direction (1)$\Rightarrow$(2) is trivial.\\
Next assume (2), $\calD\neq\emptyset$. Since $\calD$ is open it intersects the open dense set $(\overline{N}\cdot \1 P)\times H/P_H$ non-trivially. In particular there exists $\overline{n}\in\overline{\frakn}$ such that $\overline{n}^{-1}\sigma(\overline{n})\in N\tilde{w}_0MAN$. By Lemma~\ref{lem:TauDecompositionN} we can write $\overline{n}=\overline{n}_He^X$ with $\overline{n}_H\in\overline{N}_H$ and $X\in\overline{\frakn}^{-\sigma}$. Then $\overline{n}^{-1}\sigma(\overline{n})=e^{-2X}$ and hence $e^{-2X}\in N\tilde{w}_0MAN$ which shows (3).\\
Now assume (3), we have $e^X\in N\tilde{w}_0MAN$ for some $X\in\overline{\frakn}^{-\sigma}$ and let $\lambda\in\fraka_{+,\reg}^*\cap\Lambda^+(\fraka)$. By Lemma~\ref{lem:MatrixCoeffPolynomialOnNbar}~(3) we have $p^\lambda(Y)\neq0$ if and only if $e^Y\in N\tilde{w}_0MAN$. Hence $p^\lambda(X)\neq0$ which shows (4).\\
Finally assume (4), the restriction of $p^\lambda$ to $\overline{\frakn}^{-\sigma}$ is non-zero for some $\lambda\in\fraka_{+,\reg}^*\cap\Lambda^+(\fraka)$. Note that such $\lambda$ exist by Proposition~\ref{lem:BoundednessAFct}~(4). Then $(p^\lambda|_{\overline{\frakn}^{-\sigma}})^{-1}(\RR\setminus\{0\})\subseteq\overline{\frakn}^{-\sigma}$ is open dense and consequently, by Lemma~\ref{lem:TauDecompositionN},
\begin{align*}
 U :={}& \{\overline{n}\in\overline{N}:\overline{n}^{-1}\sigma(\overline{n})\in N\tilde{w}_0MAN\}\\
 ={}& \{\overline{n}_He^X:\overline{n}_H\in\overline{N}_H, X\in\overline{\frakn}^{-\sigma}, p^\lambda(-2X)\neq0\}
\end{align*}
is open dense in $\overline{N}$. Consider the topological isomorphism
\begin{equation*}
 \overline{N}\times\overline{N}_H\to\overline{N}\times\overline{N}_H,\,(\overline{n},\overline{n}_H)\mapsto(\overline{n}^{-1}\overline{n}_H,\overline{n}_H)
\end{equation*}
and denote by $V\subseteq\overline{N}\times\overline{N}_H$ the preimage of the open dense set $(\overline{N}\cap N\tilde{w}_0MAN)\times\overline{N}_H$. Then $U\times\overline{N}_H$ and $V$ are both open dense in $\overline{N}\times\overline{N}_H$ and hence their intersection is open dense. The image of the intersection under the canonical projection $G\times H\to G/P\times H/P_H$ is therefore open dense in $G/P\times H/P_H$, but it is also contained in $\calD$ which shows that $\calD$ is dense in $G/P\times H/P_H$. This proves (1) and hence the equivalence of all three statements follows.
\end{proof}

\begin{remark}
The kernel $K_{\alpha,\beta}(g,h)$ is still defined for $\calD=\emptyset$ if one imposes the condition $\beta=0$. In this case it is simply the kernel of the classical Knapp--Stein intertwiner restricted to $G\times H\subseteq G\times G$, see \eqref{eq:ClassicalKSasConvolution}.
\end{remark}

\begin{corollary}\label{cor:DDense}
Assume that either
\begin{enumerate}[(1)]
\item $G$ is a simple rank one group, $P=P_{\min}$ and $G_0\nsubseteq H$ or
\item $G=G'\times G'$, $H=\Delta(G')$ the diagonal and $P=P'\times P'$.
\end{enumerate}
Then $\calD\subseteq G/P\times H/P_H$ is open dense.
\end{corollary}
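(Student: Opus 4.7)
The plan is to apply Proposition~\ref{prop:DDense} and in each case verify its condition~(4): that $p^\lambda|_{\overline{\frakn}^{-\sigma}}\not\equiv 0$ for some $\lambda\in\fraka_{+,\reg}^*\cap\Lambda^+(\fraka)$, noting that such a $\lambda$ exists by Lemma~\ref{lem:BoundednessAFct}(4).

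For case~(1), the first step is to show $\overline{\frakn}^{-\sigma}\neq 0$. The hypothesis $G_0\not\subseteq H$ combined with $H\subseteq G^\sigma$ forces $\sigma\neq\id_\frakg$. Since $G$ is simple of rank one and $P=P_{\min}\neq G$, Lemma~\ref{lem:SigmaOnN} then gives $\sigma|_\frakn\neq\id_\frakn$, so $\frakn^{-\sigma}\neq 0$. As $\sigma$ and $\theta$ commute, also $\overline{\frakn}^{-\sigma}=\theta(\frakn^{-\sigma})\neq 0$. Next, I would invoke the explicit formula from Section~\ref{sec:ExRankOne}, $p^\lambda(X)=N(X)^{2\lambda}$, where $N$ is the $H$-type norm on $\overline{\frakn}$. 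Since $N$ is strictly positive on nonzero vectors, $p^\lambda$ is nonzero on every nonzero element of $\overline{\frakn}^{-\sigma}$, which verifies~(4).

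For case~(2), I would use the identification $\overline{\frakn}^{-\sigma}=\{(X,-X):X\in\overline{\frakn}'\}$ noted in Section~\ref{sec:ExProduct}. Picking $\lambda'\in\Lambda^+(\fraka')$ with $\lambda'(H_\alpha)>0$ for every simple root of $P'$ outside its Levi part (supplied by Lemma~\ref{lem:BoundednessAFct}(4) applied to $G'$), set $\lambda=(\lambda',\lambda')\in\fraka_\CC^*=(\fraka')_\CC^*\oplus(\fraka')_\CC^*$. Since the simple system $\Pi$ and the subset $F$ for $P=P'\times P'$ are disjoint unions of the data for $P'$, one checks $\lambda\in\fraka_{+,\reg}^*\cap\Lambda^+(\fraka)$. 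The product-group factorization of Section~\ref{sec:ExProduct} then gives
\[ p^\lambda((X,-X))=p'^{\lambda'}(X)\,p'^{\lambda'}(-X), \]
a product of two nonzero polynomials in $X\in\overline{\frakn}'$ by Lemma~\ref{lem:MatrixCoeffPolynomialOnNbar}; thus $p^\lambda$ is a nonzero polynomial on $\overline{\frakn}^{-\sigma}$, verifying~(4).

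Both verifications are short once Proposition~\ref{prop:DDense} is in place; the only mildly delicate point is tracking, in case~(2), that $\lambda=(\lambda',\lambda')$ genuinely lies in $\fraka_{+,\reg}^*\cap\Lambda^+(\fraka)$, but this is immediate from the product structure of the root data and the definitions in \eqref{eq:DefLambdaG+a}--\eqref{eq:Defa^*_+}.
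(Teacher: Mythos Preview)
Your argument is correct and reaches the same conclusion via Proposition~\ref{prop:DDense}, but you verify condition~(4) whereas the paper verifies condition~(3). The two routes are logically equivalent, but there is a small methodological difference worth noting.

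In case~(1), the paper argues structurally: since $G$ has rank one, $W=\{1,w_0\}$ and there are only two Bruhat cells, so $e^X\in N\tilde{w}_0MAN$ for every $X\in\overline{\frakn}\setminus\{0\}$; combined with $\overline{\frakn}^{-\sigma}\neq 0$ this gives condition~(3) immediately. Your argument instead invokes the explicit formula $p^\lambda(X)=N(X)^{2\lambda}$ from Section~\ref{sec:ExRankOne}. That formula is stated there only for the concrete groups $\SU(1,n;\FF)$, so to apply it to an arbitrary ``simple rank one group'' as in the corollary you are implicitly using the classification of such groups. This is fine, but the paper's two-Bruhat-cell argument avoids that detour and is closer in spirit to the surrounding lemmas.

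In case~(2), the paper again checks condition~(3): it observes $\exp(\overline{\frakn}^{-\sigma})=\{(\overline{n},\overline{n}^{-1}):\overline{n}\in\overline{N}'\}$ and that $U:=\overline{N}'\cap N'\tilde{w}_0'M'A'N'$ is open dense, so $U\cap U^{-1}\neq\emptyset$. Your polynomial factorization $p^\lambda((X,-X))=p'^{\lambda'}(X)\,p'^{\lambda'}(-X)$ is the condition~(4) version of exactly the same idea, and is equally valid; the check that $(\lambda',\lambda')\in\fraka_{+,\reg}^*\cap\Lambda^+(\fraka)$ is indeed immediate from the product structure as you say.
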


\begin{proof}
We use criterion (3) in Proposition~\ref{prop:DDense}.
\begin{enumerate}[(1)]
\item Since $G$ is of rank one we have $W=\{1,w_0\}$ and hence there are only two Bruhat cells $N\tilde{w}_0MAN$ and $MAN$. Further, $\overline{N}\cap MAN=\{\1\}$. Therefore $e^X\in N\tilde{w}_0MAN$, $X\in\overline{\frakn}$, if and only if $X\neq0$. Since $G_0\nsubseteq H$ we have $\sigma|_{\overline{\frakn}}\neq\id_{\overline{\frakn}}$ by Lemma~\ref{lem:SigmaOnN} and hence $\overline{\frakn}^{-\sigma}\neq0$. Therefore $\exp(\overline{\frakn}^{-\sigma})\cap N\tilde{w}_0MAN\neq\emptyset$.
\item We have $\overline{\frakn}^{-\sigma}=\{(X,-X):X\in\overline{\frakn}'\}$ and hence
\begin{equation*}
 \exp(\overline{\frakn}^{-\sigma}) = \{(\overline{n},\overline{n}^{-1}):\overline{n}\in\overline{N}'\}.
\end{equation*}
Further
\begin{equation*}
 N\tilde{w}_0MAN = N'\tilde{w}'_0M'A'N'\times N'\tilde{w}'_0M'A'N'.
\end{equation*}
Since $U:=\overline{N}'\cap N'\tilde{w}'_0M'A'N'$ is open dense in $\overline{N}'$ the intersection $U\cap U^{-1}$ is non-empty and for every $\overline{n}\in U\cap U^{-1}$ we have $(\overline{n},\overline{n}^{-1})\in\exp(\overline{\frakn}^{-\sigma})\cap N\tilde{w}_0MAN$.\qedhere
\end{enumerate}
\end{proof}

\begin{example}\label{ex:DDenseForSp(n,R)}
Section~\ref{sec:SymmRSpaces} provides a big class of examples where we can easily check whether the subset $\calD$ is dense in $G/P\times H/P_H$. We illustrate this in the case $G=\Sp(n,\RR)$ with $P=\GL(n,\RR)\ltimes\Sym(n,\RR)$ the Siegel parabolic subgroup and $H=\Sp(m,\RR)\times\Sp(n-m,\RR)$, see Example~\ref{ex:JAExamples}~(1). Here $\overline{\frakn}=\Sym(n,\RR)$ and then  Lemma~\ref{lem:MatrixCoeffPolynomialOnNbar}~(3) and \eqref{eq:pLambdaFormula} imply that for $X\in\overline{\frakn}$ we have $\exp(X)\in N\tilde{w}_0MAN$ if and only if $X$ is an invertible matrix. Now
\begin{equation*}
 \overline{\frakn}^{-\sigma} = \left\{\left(\begin{array}{cc}0&Y\\Y^T&0\end{array}\right):Y\in M(m\times(n-m),\RR)\right\}.
\end{equation*}
Therefore $\overline{\frakn}^{-\sigma}$ contains invertible matrices if and only if $n=2m$. Thus $\calD\neq\emptyset$ if and only if $n=2m$. The other cases in Example~\ref{ex:JAExamples} can be treated similarly.
\end{example}
\section{Intertwining operators between principal series}

We study intertwining operators between spherical principal series representations of $G$ and $H$.

\subsection{Induced representations}\label{sec:InducedRepresentations}

For $\nu\in\fraka_\CC^*$ we define the induced representation (normalized smooth parabolic induction)
\[ I^G(\nu) := \Ind_P^G(\1\otimes e^\nu\otimes\1). \]
Here $G$ acts by left-translations on the representation space
\[ I^G(\nu) = \{f\in C^\infty(G):f(gman)=a^{-\nu-\rho}f(g)\,\forall\,g\in G,man\in MAN\} \]
which is endowed with the topology induced from $C^\infty(G)$. Note that a function $f\in I^G(\nu)$ is uniquely determined by its values on $K$ and the restriction map defines a topological isomorphism
\[ I^G(\nu)\to C^\infty(X) \]
with $X=K/(M\cap K)$. The corresponding representation $\pi_\nu$ of $G$ on $C^\infty(X)$ is called the \textit{compact picture} and is explicitly given by
\begin{equation}
 \pi_\nu(g)f(k) = e^{-(\nu+\rho)H(g^{-1}k)}f(\kappa(g^{-1}k)), \qquad g\in G,\,k\in K.\label{eq:ActionOnCinftyK/M}
\end{equation}

Similarly, for $\nu'\in(\fraka_H)_\CC^*$ we also consider the induced representation
\begin{align*}
 I^H(\nu') &:= \Ind_{P_H}^H(\1\otimes e^{\nu'}\otimes\1)
\end{align*}
and its realization $\tau_{\nu'}$ on $C^\infty(X_H)$ with $X_H=K_H/(M_H\cap K_H)$.

\subsection{Intertwining integrals}

We use the kernels $K_{\alpha,\beta}(g,h)$ to construct intertwining operators $\pi_\nu|_H\to\tau_{\nu'}$. For this we have to assume that the domain of definition $\calD$ of $K_{\alpha,\beta}(g,h)$ is an open dense subset of $G\times H$. In view of Proposition~\ref{prop:DDense} we make the following general assumption:
\begin{equation}
 \calD\neq\emptyset, \label{eq:CondD}\tag{D}
\end{equation}
assuring that $\calD$ is open dense. In the spirit of the classical Knapp--Stein operators \eqref{eq:ClassicalKSasConvolution} we would like to put for $\alpha,\beta\in\fraka_\CC^*$ and $f\in C^\infty(X)$:
\begin{equation}
 A(\alpha,\beta)f(k_H) := \int_K K_{\alpha,\beta}(k,k_H)f(k) \td k, \qquad k_H\in K_H.\label{eq:DefIntertwinerCptPicture}
\end{equation}
Since the integral kernel $K_{\alpha,\beta}(g,h)$ is in general singular this integral does not converge for all parameters $\alpha,\beta$. Further, from this expression it is a priori not clear whether $A(\alpha,\beta)f$ defines a smooth function on $K_H$, even if we assume convergence of the integral. We rewrite \eqref{eq:DefIntertwinerCptPicture} using the $H$-invariance of $K_{\alpha,\beta}(g,h)$:
\begin{equation}
 A(\alpha,\beta)f(k_H) = \int_K K_{\alpha,\beta}(k_H^{-1}k,\1)f(k) \td k = \int_K K_{\alpha,\beta}(k,\1)f(k_Hk) \td k.\label{eq:DefIntertwinerRewritten}
\end{equation}
This expression suggests the investigation of the function
$$ \tilde{K}_{\alpha,\beta}(k):=K_{\alpha,\beta}(k,\1). $$
Note that $\tilde{K}_{\alpha,\beta}$ corresponds to $K_{\alpha,\beta}$ via the isomorphism $\Delta(H)\backslash(G\times H)\cong G,\,(g,h)\mapsto h^{-1}g$.

Recall the cone $\fraka_+^*\subseteq\fraka^*$ defined in \eqref{eq:Defa^*_+} and its interior $\fraka_{+,\reg}^*$. For a smooth manifold $Y$ we denote by $\calD'(Y):=C_c^\infty(Y)'$ the space of distributions on $Y$ endowed with the weak-$\star$ topology.

\begin{theorem}\label{thm:MeromorphicPropertiesKtilde}
\begin{enumerate}
\item For $\alpha,\beta\in\fraka_\CC^*$ with $\Re\alpha,\Re\beta\in\fraka_+^*$ the function $\tilde{K}_{\alpha,\beta}$ is locally integrable on $K$ and hence defines a non-zero distribution in $\calD'(K)$. The map
$$ (\fraka_+^*+i\fraka^*)\oplus(\fraka_+^*+i\fraka^*)\to\calD'(K), \quad (\alpha,\beta)\mapsto\tilde{K}_{\alpha,\beta} $$
is holomorphic on $(\fraka_{+,\reg}^*+i\fraka^*)\oplus(\fraka_{+,\reg}^*+i\fraka^*)$.
\item The distribution $\tilde{K}_{\alpha,\beta}$ extends meromorphically in the parameters $\alpha,\beta\in\fraka_\CC^*$. More precisely, there exist $X_1,\ldots,X_M$, $Y_1,\ldots,Y_M\in\fraka$ and $d_1,\ldots,d_M\in\ZZ$ such that the map
\begin{equation*}
 (\alpha,\beta)\mapsto\prod_{j=1}^M\Gamma(\alpha(X_j)+\beta(Y_j)+d_j)^{-1} \cdot \tilde{K}_{\alpha,\beta}
\end{equation*}
extends to an entire function $\fraka_\CC^*\times\fraka_\CC^*\to\calD'(K)$.
\end{enumerate}
\end{theorem}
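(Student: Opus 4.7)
The plan is to transfer the distribution $\tilde{K}_{\alpha,\beta}$ from $K$ to the flat picture on $\overline{\frakn}$ via the integration formula \eqref{eq:IntFormulaKNbar} and the diffeomorphism $\exp\colon\overline{\frakn}\to\overline{N}$, where the kernel reduces to a product of complex powers of non-negative polynomial functions. Using Lemma~\ref{lem:BoundednessAFct}~(3) I would choose a basis $\lambda_1,\dots,\lambda_r$ of $\fraka^*$ contained in $\Lambda^+(\fraka)$ and expand $\alpha=\sum_i\alpha_i\lambda_i$, $\beta=\sum_i\beta_i\lambda_i$ with $\alpha_i,\beta_i\in\CC$. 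Using the equivariance \eqref{eq:EquivarianceAFunction} together with the identity $\sigma(\kappa(\overline{n}))=\kappa(\sigma(\overline{n}))$ for $\overline{n}\in\overline{N}$ (which follows from $[\sigma,\theta]=0$ and condition \eqref{eq:CondH}, so that $\sigma$ preserves $\overline{N}$), a direct computation should show that for $\overline{n}=e^X\in\overline{N}$
\[ \tilde{K}_{\alpha,\beta}(\kappa(\overline{n})) = e^{L(\alpha,\beta)(H(\overline{n}))}\prod_i p^{-w_0\lambda_i}(X)^{\alpha_i}\prod_i p^{\lambda_i}\bigl(\phi_\sigma(X)\bigr)^{\beta_i}, \]
where $L$ is $\CC$-linear in $(\alpha,\beta)$ and $\phi_\sigma\colon\overline{\frakn}\to\overline{\frakn}$ is the polynomial Baker--Campbell--Hausdorff map defined by $e^{\phi_\sigma(X)}=e^{-X}\sigma(e^X)\in\overline{N}$. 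By Lemma~\ref{lem:MatrixCoeffPolynomialOnNbar} the functions $p^{-w_0\lambda_i}$ and $p^{\lambda_i}\circ\phi_\sigma$ are polynomials on $\overline{\frakn}$, and they are non-negative since they are positive on an open dense subset.

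For part~(1), I would decompose $\Re\alpha=\sum_j c_j\mu_j$ with $c_j\geq 0$ and $\mu_j\in\Lambda^+(\fraka)$, and similarly for $\Re\beta$. By Lemma~\ref{lem:BoundednessAFct}~(2) each $a^{\mu_j}$ is bounded on the open dense subset $K\cap\overline{N}MAN\subseteq K$, so $|\tilde{K}_{\alpha,\beta}|$ is bounded on the open dense subset of $K$ where it is defined; the complement has Haar measure zero. Hence $\tilde{K}_{\alpha,\beta}$ is locally integrable on the compact group $K$ and defines a non-zero distribution. Holomorphy on $(\fraka_{+,\reg}^*+i\fraka^*)\oplus(\fraka_{+,\reg}^*+i\fraka^*)$ then follows from pointwise holomorphy of $(\alpha,\beta)\mapsto\tilde{K}_{\alpha,\beta}(k)$ together with dominated convergence using locally uniform majorants.

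For part~(2), pairing $\tilde{K}_{\alpha,\beta}$ against a test function $f\in C^\infty(K)$ and transferring through \eqref{eq:IntFormulaKNbar} and $\exp$ reduces the problem to the meromorphic continuation of integrals of the form
\[ \int_{\overline{\frakn}}\prod_i q_i(X)^{\alpha_i}\prod_i r_i(X)^{\beta_i}\,\Phi(\alpha,\beta;X)\,dX, \]
where $q_i=p^{-w_0\lambda_i}$ and $r_i=p^{\lambda_i}\circ\phi_\sigma$ are non-negative polynomials on $\overline{\frakn}$, and $\Phi(\alpha,\beta;\blank)$ is smooth, entire in $(\alpha,\beta)$, and integrable for $(\alpha,\beta)$ in the initial cone (the exponential factor $e^{L(\alpha,\beta)(H(\overline{n}))}$ gets absorbed into $\Phi$ together with the Iwasawa Jacobian $e^{-2\rho H(\overline{n})}$). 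This is the classical setting of multi-parameter Bernstein--Sato theory: by iterated application of Bernstein's functional equation (one polynomial factor at a time, using Hartogs for joint meromorphy), there exist integer linear forms $L_k(\alpha_1,\ldots,\beta_r)=\sum_i a_{k,i}\alpha_i+\sum_i b_{k,i}\beta_i+d_k$ such that multiplication by $\prod_k\Gamma(L_k)^{-1}$ yields an entire family of distributions in $(\alpha_1,\ldots,\beta_r)\in\CC^{2r}$. Re-expressing each $L_k$ in the form $\alpha(X_j)+\beta(Y_j)+d_j$ via the basis $\{\lambda_i\}$ of $\fraka^*$ (and its dual in $\fraka$) produces the required $X_j,Y_j\in\fraka$ and $d_j\in\ZZ$. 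The main obstacle is the joint meromorphic continuation in both groups of parameters $(\alpha_i)$ and $(\beta_i)$; this is feasible precisely because the $q_i,r_i$ are non-negative, so $q_i^{\alpha_i},r_i^{\beta_i}$ are well-defined without phase indeterminacy and Bernstein's functional equation applies directly in each variable.
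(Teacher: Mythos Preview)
Your argument for part~(1) is essentially the paper's: boundedness of $\tilde{K}_{\alpha,\beta}$ on an open dense subset of $K$ follows from writing $\Re\alpha,\Re\beta$ as non-negative combinations of elements of $\Lambda^+(\fraka)$ and invoking Lemma~\ref{lem:BoundednessAFct}, so $\tilde{K}_{\alpha,\beta}\in L^\infty(K)\subseteq L^1(K)$.

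For part~(2) you take a genuinely different route from the paper, and there is a gap. The paper does \emph{not} transfer to $\overline{\frakn}$. It stays on the compact manifold $K$ and observes that, by Lemma~\ref{lem:BoundednessAFct}~(1), for a basis $\varpi_1,\dots,\varpi_r$ of $\fraka^*$ lying in $\Lambda^+(\fraka)$ the functions
\[
u_j(k)=a(\tilde{w}_0^{-1}k^{-1})^{\varpi_j},\qquad u_{r+j}(k)=a(\tilde{w}_0^{-1}k^{-1}\sigma(k))^{\varpi_j}
\]
are matrix coefficients of finite-dimensional representations of $G$ and therefore extend to non-negative \emph{real-analytic} functions on all of $K$. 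One then writes $\tilde{K}_{\alpha,\beta}=u_1^{s_1}\cdots u_{2r}^{s_{2r}}$ and applies the Atiyah--Bernstein--Gelfand/Sabbah theorem (stated as Theorem~\ref{thm:KashiwaraKawai}) directly on the compact real-analytic manifold $K$. No polynomiality and no flat picture are needed.

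Your transfer to $\overline{\frakn}$ introduces a non-compactness problem that you do not address. After pulling back via \eqref{eq:IntFormulaKNbar}, the ``test function'' $\Phi(\alpha,\beta;X)$ contains the factor $f(\kappa(e^X))e^{-2\rho H(e^X)}e^{L(\alpha,\beta)(H(e^X))}$, which is smooth with a certain decay at infinity but is \emph{not} compactly supported and not Schwartz (indeed $H(e^X)$ is not polynomial in $X$). Bernstein--Sato theory produces a meromorphic family of \emph{distributions} on $\overline{\frakn}$, i.e.\ functionals on $C_c^\infty(\overline{\frakn})$; pairing these against a non-compactly-supported $\Phi$ is an extra step. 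Concretely, each application of the Bernstein functional equation $b(s)q^s=P(s)q^{s+1}$ is an integration by parts on the unbounded domain $\overline{\frakn}$, and you must justify that boundary terms at infinity vanish and that $P^*\Phi$ retains enough integrability. Moreover, the zero sets of your polynomials $q_i,r_i$ (the Bruhat strata) are unbounded in $\overline{\frakn}$, so a naive ``cut off near the singular locus, the rest is holomorphic'' argument does not immediately separate the local singularities from infinity. None of this is insurmountable, but it is genuine work that your sketch omits; the paper's approach of staying on $K$ avoids it entirely and is what makes the proof a two-line application of Theorem~\ref{thm:KashiwaraKawai}.
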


For the proof we use a general result on the meromorphic continuation of complex power functions (see \cite{Ati70} and \cite{BG69} for a proof using Hironaka's resolution of singularities, see also \cite[Theorem 1]{KK79} and \cite[Th\'{e}or\`{e}me 2.1]{Sab87}):

\begin{theorem}\label{thm:KashiwaraKawai}
Let $Y$ be a compact real analytic manifold with a volume form $\td y$ and let $u_1,\ldots,u_N$ be non-negative real-valued real analytic functions on $Y$. Then the distribution $u_1^{s_1}\cdots u_N^{s_N}\in\calD'(Y)$ defined for $s_1,\ldots,s_N\in\CC$ with $\Re s_k\geq0$ by
\begin{equation*}
 \langle u_1^{s_1}\cdots u_N^{s_N},\varphi\rangle = \int_Y \varphi(y)u_1(y)^{s_1}\cdots u_N(y)^{s_N} \td y, \qquad \varphi\in C_c^\infty(Y),
\end{equation*}
extends meromorphically in the parameters $s_1,\ldots,s_N\in\CC$. More precisely, there exist $\alpha_{jk}\in\NN_0$ and $\beta_j\in\ZZ$, $j=1,\ldots,M$, $k=1,\ldots,N$, such that the map
\begin{equation*}
 (s_1,\ldots,s_N)\mapsto\prod_{j=1}^M\Gamma\left(\sum_{k=1}^N\alpha_{jk}s_k+\beta_j\right)^{-1}u_1^{s_1}\cdots u_N^{s_N}
\end{equation*}
extends to an entire function $\CC^N\to\calD'(Y)$.
\end{theorem}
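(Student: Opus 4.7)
The plan is to invoke Hironaka's theorem on resolution of singularities in the real analytic category and reduce the problem to a local monomial model, then handle that model by the classical one-variable theory of the distributions $t_+^z/\Gamma(z+1)$ and $t_-^z/\Gamma(z+1)$. Convergence of the integral defining $u_1^{s_1}\cdots u_N^{s_N}$ for $\Re s_k\geq 0$ is immediate: on the compact $Y$ each $u_k$ is bounded, so $u_k^{\Re s_k}$ is uniformly bounded and the integral converges absolutely against any $\varphi\in C_c^\infty(Y)$. Holomorphy of the pairing on the interior $\{\Re s_k>0\}$ then follows from dominated convergence together with smoothness of $u_k^{s_k}$ on $\{u_k>0\}$.

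The heart of the argument is the reduction. By Hironaka's theorem in the real analytic category, there is a proper real analytic map $\pi:\tilde{Y}\to Y$ of real analytic manifolds of the same dimension such that $\pi$ is a diffeomorphism over the complement of the zero set of $u_1\cdots u_N$, and such that around every point of $\tilde{Y}$ there are local coordinates $(t_1,\ldots,t_n)$ in which
\[ (u_k\circ\pi)(t)=\epsilon_k(t)\prod_{i=1}^n t_i^{a_{ki}},\qquad \pi^*(\td y)=\eta(t)\prod_{i=1}^n t_i^{c_i}\,\td t_1\cdots\td t_n, \]
with $a_{ki},c_i\in\NN_0$ and $\epsilon_k,\eta$ real analytic and non-vanishing on the chart. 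By properness of $\pi$ and change of variables, the pairing $\langle u_1^{s_1}\cdots u_N^{s_N},\varphi\rangle$ is rewritten as an integral over $\tilde{Y}$, and a partition of unity subordinate to these monomial charts expresses it as a finite sum of local contributions.

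In each chart the resulting integral splits as a product of one-variable integrals of the form $\int\psi(t)|t|^{z_i}\td t$ or $\int\psi(t)\sgn(t)|t|^{z_i}\td t$ (depending on the parity of the relevant exponents), with $z_i=\sum_k a_{ki}s_k+c_i$ a fixed integral linear combination of the $s_k$ and $\psi$ smooth and compactly supported. The classical fact that $t_+^{z_i}/\Gamma(z_i+1)$ and $t_-^{z_i}/\Gamma(z_i+1)$ are entire distribution-valued functions of $z_i$, combined with multiplicativity across the coordinate factors in a chart and summation over the finitely many charts, shows that
\[ \prod_{j=1}^M\Gamma\!\Big(\sum_{k=1}^N\alpha_{jk}s_k+\beta_j\Big)^{-1}\cdot u_1^{s_1}\cdots u_N^{s_N} \]
extends to an entire $\calD'(Y)$-valued function, where the linear forms $\sum_k\alpha_{jk}s_k+\beta_j$ run over the values $z_i+1$ attached to coordinate directions in all charts, with $\alpha_{jk}=a_{ki}\in\NN_0$ and $\beta_j=c_i+1\in\ZZ$.

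The main obstacle is the invocation of Hironaka's resolution in the real analytic category, which does all the geometric work; granted this, what remains is a routine reduction to the one-variable identity for $t_+^z/\Gamma(z+1)$ together with bookkeeping of exponents across charts. A conceptually different route that avoids Hironaka is Bernstein's functional-equation method via $b$-functions: one constructs a polynomial $b(s_1,\ldots,s_N)$ and a differential operator $P(s,\partial)$ satisfying $P\cdot\prod_k u_k^{s_k+1}=b(s)\prod_k u_k^{s_k}$, from which meromorphic continuation and the $\Gamma$-function pole control follow directly; this is the approach underlying \cite{BG69}.
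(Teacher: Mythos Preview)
The paper does not prove this theorem itself; it is quoted from the literature (Atiyah, Bernstein--Gel'fand, Kashiwara--Kawai, Sabbah), with the sole remark that the stated formulation follows from \cite[Th\'eor\`eme~2.1]{Sab87} after passing to finitely many coordinate patches and a subordinate partition of unity. Your sketch is essentially Atiyah's argument via Hironaka's resolution of singularities---i.e.\ precisely one of the cited proofs---and is correct in outline.

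Two small points. First, since each $u_k$ is non-negative, in a monomial chart the identity $(u_k\circ\pi)(t)=\epsilon_k(t)\prod_i t_i^{a_{ki}}\geq 0$ with $\epsilon_k$ nonvanishing forces the exponents $a_{ki}$ to be even along every coordinate hyperplane actually meeting the chart; together with the fact that the pulled-back measure is a density $|\eta(t)|\prod_i|t_i|^{c_i}\,|\td t|$, only the one-variable distributions $|t|^z$ occur and the $\sgn(t)|t|^z$ alternative you mention does not arise. This does not affect the conclusion, as $|t|^z/\Gamma(z+1)$ is still entire and produces $\Gamma$-factors of the required form $\Gamma(\sum_k a_{ki}s_k+c_i+1)$. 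Second, your attribution of the $b$-function route to \cite{BG69} is slightly off: Bernstein--Gel'fand also argue via resolution of singularities; the functional-equation/$D$-module approach is rather that of Bernstein's later work and of the Kashiwara--Kawai and Sabbah references.
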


In fact, the result in this formulation can be derived from \cite[Th\'{e}or\`{e}me  2.1]{Sab87} by choosing a finite number of coordinate patches and a corresponding partition of unity.

\begin{proof}[{Proof of Theorem~\ref{thm:MeromorphicPropertiesKtilde}}]
\begin{enumerate}
\item By the assumption~\eqref{eq:CondD} and Proposition~\ref{prop:DDense} the function $\tilde{K}_{\alpha,\beta}$ is defined on an open dense subset of $K$ and has strictly positive values on it. Since $\Re\alpha,\Re\beta\in\fraka_+^*$ this function is bounded by Lemma~\ref{lem:BoundednessAFct}~(1) and hence defines a distribution on $K$. Holomorphic dependence on $\alpha$ and $\beta$ follows from part (2).
\item We apply Theorem~\ref{thm:KashiwaraKawai} to $Y=K$ with the Haar measure $\td y=\td k$. Note that by Lemma~\ref{lem:BoundednessAFct}~(3) there exists a basis of $\fraka_\CC^*$ consisting of elements $\varpi_1,\ldots,\varpi_r\in\Lambda^+(\fraka)$ (see \eqref{eq:DefLambdaG+a} for the definition of $\Lambda^+(\fraka)$). Using Lemma~\ref{lem:BoundednessAFct}~(1) we define real analytic functions $u_1,\ldots,u_{2r}$ on $K$ by the formulas
\begin{equation*}
 u_j(k) := a(\tilde{w}_0^{-1}k^{-1})^{\varpi_j}, \qquad u_{r+j}(k) := a(\tilde{w}_0^{-1}k^{-1}\sigma(k))^{\varpi_j}
\end{equation*}
for $j=1,\ldots,r$. Note that the functions $u_j$ are non-negative on $K$ since the functions $a^{\varpi_j}$ are positive on the dense subset $K\cap N\tilde{w}_0MAN\subseteq K$. Then the distribution $\tilde{K}_{\alpha,\beta}$ can be written as
\begin{equation*}
 \tilde{K}_{\alpha,\beta} = u_1^{s_1}\cdots u_{2r}^{s_{2r}}
\end{equation*}
with $\alpha=\sum_{j=1}^rs_j\varpi_j$, $\beta=\sum_{j=1}^rs_{r+j}\varpi_j$ and Theorem~\ref{thm:KashiwaraKawai} yields the claim.\qedhere
\end{enumerate}
\end{proof}

We now use Theorem~\ref{thm:MeromorphicPropertiesKtilde} to construct intertwining operators $\pi_\nu|_H\to \tau_{\nu'}$. For the statement let $L(E,F)$ denote the space of bounded linear operators between two Fr\'{e}chet spaces $E$ and $F$ endowed with the topology of pointwise convergence.

\begin{theorem}\label{thm:ConvergenceMeromorphicContinuationIntertwiners}
\begin{enumerate}
\item For $\alpha,\beta\in\fraka_\CC^*$ with $\Re\alpha,\Re\beta\in\fraka_+^*$ the integral in \eqref{eq:DefIntertwinerCptPicture} converges absolutely for every $f\in C^\infty(X)$ and defines a function $A(\alpha,\beta)f\in C^\infty(X_H)$ depending holomorphically on $\alpha,\beta\in\fraka_{+,\reg}^*+i\fraka^*$.
\item The family of operators $A(\alpha,\beta):C^\infty(X)\to C^\infty(X_H)$ extends meromorphically in $\alpha,\beta\in\fraka_\CC^*$. More precisely, there exist $X_1,\ldots,X_M$, $Y_1,\ldots,Y_M\in\fraka$ and $d_1,\ldots,d_M\in\ZZ$ such that the map
\begin{equation*}
 (\alpha,\beta)\mapsto\prod_{j=1}^M\Gamma(\alpha(X_j)+\beta(Y_j)+d_j)^{-1} \cdot A(\alpha,\beta)
\end{equation*}
extends to a non-trivial entire function $\fraka_\CC^*\times\fraka_\CC^*\to L(C^\infty(X),C^\infty(X_H))$.
\item Let $(\alpha,\beta)\in\fraka_\CC^*\oplus\fraka_\CC^*$ be a regular point of $A(\alpha,\beta)$. Then for
\begin{equation}
 \nu=-w_0\alpha+\sigma\beta-w_0\beta+\rho, \qquad \nu'=-\alpha|_{\fraka^\sigma_\CC}-\rho_H\label{eq:ParameterRelation}
\end{equation}
the map $A(\alpha,\beta)$ defines an $H$-intertwining operator $\pi_\nu|_H\to\tau_{\nu'}$, i.e.
$$ A(\alpha,\beta)\in\Hom_H(\pi_\nu|_H,\tau_{\nu'}). $$
\end{enumerate}
\end{theorem}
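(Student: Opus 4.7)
The plan is to reduce all three parts to properties of $\tilde K_{\alpha,\beta}$ already established in Theorem~\ref{thm:MeromorphicPropertiesKtilde} together with the equivariance identities of Proposition~\ref{prop:KernelEquivariance}. For part~(1), using the left $\Delta(H)$-invariance from Proposition~\ref{prop:KernelEquivariance}(1) and left-invariance of the Haar measure on $K$, I would rewrite
$$ A(\alpha,\beta)f(k_H) = \int_K \tilde K_{\alpha,\beta}(k)\,f(k_H k)\,\td k, $$
exhibiting $A(\alpha,\beta)f$ as the pairing of the distribution $\tilde K_{\alpha,\beta}\in\calD'(K)$ with the smooth family $k\mapsto f(k_H k)$. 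Absolute convergence for $\Re\alpha,\Re\beta\in\fraka^*_+$ is then immediate from Theorem~\ref{thm:MeromorphicPropertiesKtilde}(1); smoothness of $A(\alpha,\beta)f$ on $K_H$ follows by differentiation under the integral using the smoothness of $k_H\mapsto(k\mapsto f(k_H k))$ with values in $C^\infty(K)$; continuity as an operator $C^\infty(X)\to C^\infty(X_H)$ and holomorphy in $(\alpha,\beta)$ are then inherited from the corresponding properties of $\tilde K_{\alpha,\beta}$.

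Part~(2) is a direct consequence of Theorem~\ref{thm:MeromorphicPropertiesKtilde}(2): the same Gamma normalization that renders $\tilde K_{\alpha,\beta}$ entire in $\calD'(K)$ renders $A(\alpha,\beta)$ entire in $L(C^\infty(X),C^\infty(X_H))$. For non-triviality, I would evaluate the normalized family at a base point $(\alpha_0,\beta_0)$ with $\Re\alpha_0,\Re\beta_0\in\fraka^*_{+,\reg}$ chosen to avoid zeros of the normalizing Gamma factors: applied to the constant function $f\equiv1$, the operator produces the strictly positive function $k_H\mapsto\int_K K_{\alpha_0,\beta_0}(k,k_H)\,\td k$, which is nonzero by Proposition~\ref{prop:DDense}.

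For part~(3) I would work in the induced picture. Given $\hat f\in I^G(\nu)$, extend the definition to all of $H$ by $\hat A(\alpha,\beta)\hat f(h):=\int_K K_{\alpha,\beta}(k,h)\hat f(k)\,\td k$; in the compact picture this is $A(\alpha,\beta)$ followed by restriction to $K_H$. The right $P_H$-equivariance of $K_{\alpha,\beta}$ in the second variable (Proposition~\ref{prop:KernelEquivariance}(2)) gives $\hat A(\alpha,\beta)\hat f(h\cdot m_H a_H n_H)=a_H^\alpha\,\hat A(\alpha,\beta)\hat f(h)$, so $\hat A(\alpha,\beta)\hat f\in I^H(\nu')$ precisely when $\nu'=-\alpha-\rho'$ on $A\cap A_H$, i.e.\ $\nu'=-\alpha|_{\fraka^\sigma_\CC}-\rho'$. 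To verify the intertwining identity $\hat A(\alpha,\beta)(\pi_\nu(h_0)\hat f)(h)=\hat A(\alpha,\beta)\hat f(h_0^{-1}h)$ for $h_0\in H$, on the left-hand side I would apply the integral formula \eqref{eq:IntFormulaGActionOnK} to change variables $k\mapsto\kappa(h_0 k)$ and use the right $P$-equivariance of $\hat f$ in the form $\hat f(h_0^{-1}\kappa(h_0 k))=e^{(\nu+\rho)H(h_0 k)}\hat f(k)$, producing the $k$-integrand $e^{(\nu-\rho)H(h_0 k)}K_{\alpha,\beta}(\kappa(h_0 k),h)\hat f(k)$. On the right-hand side, $\Delta(H)$-invariance (Proposition~\ref{prop:KernelEquivariance}(1)) turns $K_{\alpha,\beta}(k,h_0^{-1}h)$ into $K_{\alpha,\beta}(h_0 k,h)$, and then right $MAN$-equivariance in the first variable (Proposition~\ref{prop:KernelEquivariance}(2)) yields the $k$-integrand $e^{(-w_0\alpha+\sigma\beta-w_0\beta)H(h_0 k)}K_{\alpha,\beta}(\kappa(h_0 k),h)\hat f(k)$. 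Matching exponents forces $\nu-\rho=-w_0\alpha+\sigma\beta-w_0\beta$, the first relation in \eqref{eq:ParameterRelation}. Once established on the convergence region, the identity propagates to all regular $(\alpha,\beta)$ via the meromorphic continuation of part~(2).

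The main obstacle I anticipate is the bookkeeping in part~(3): several $a$-weights enter the computation (from \eqref{eq:IntFormulaGActionOnK}, from the $P$-transformation of $\hat f$, and from the two right-equivariances of $K_{\alpha,\beta}$), and they must combine to give precisely the parameter relations in \eqref{eq:ParameterRelation}. As a sanity check, the indeterminacy of $\beta$ on $\fraka^{w_0\sigma}$ noted in Remark~\ref{rem:KernelDependenceOnBeta} exactly matches the fact that $\nu$ depends on $\beta$ only through the combination $\sigma\beta-w_0\beta$.
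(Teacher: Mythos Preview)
Your proposal is correct and follows essentially the same approach as the paper. Parts~(1) and~(2) are identical in spirit: rewrite $A(\alpha,\beta)f(k_H)=\langle\tilde K_{\alpha,\beta},f(k_H\,\cdot\,)\rangle$ and read off convergence, smoothness, and meromorphic continuation from Theorem~\ref{thm:MeromorphicPropertiesKtilde}; the paper additionally spells out the ``weakly holomorphic implies holomorphic'' step for the $L(C^\infty(X),C^\infty(X_H))$-valued map, while you in turn supply an explicit non-triviality argument (evaluating on the constant function at a generic point of the convergence region) that the paper omits. For part~(3) both arguments rest on exactly the same three ingredients---$\Delta(H)$-invariance, the right $P\times P_H$-equivariance of Proposition~\ref{prop:KernelEquivariance}(2), and the integral formula~\eqref{eq:IntFormulaGActionOnK}---with the only difference that the paper carries out the bookkeeping in the compact picture (comparing $(A(\alpha,\beta)\pi_\nu(h)f)(k_H)$ with $(\tau_{\nu'}(h)A(\alpha,\beta)f)(k_H)$) whereas you use the induced picture; the resulting exponent match $\nu-\rho=-w_0\alpha+\sigma\beta-w_0\beta$ and $\nu'=-\alpha|_{\fraka^\sigma_\CC}-\rho_H$ is the same.
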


\begin{proof}
By \eqref{eq:DefIntertwinerRewritten} we can write $A(\alpha,\beta)$ as
$$ A(\alpha,\beta)f(k_H) = \langle\tilde{K}_{\alpha,\beta},f(k_H\blank)\rangle. $$
Since the map $K_H\times C^\infty(K)\to C^\infty(K),\,(k_H,f)\mapsto f(k_H\blank)$ is smooth the statement in (1) is clear by Theorem~\ref{thm:MeromorphicPropertiesKtilde}~(1). For part (2) let
$$ \gamma(\alpha,\beta) = \prod_{j=1}^M\Gamma(\alpha(X_j)+\beta(Y_j)+d_j)^{-1} $$
with $X_1,\ldots,X_M$, $Y_1,\ldots,Y_M\in\fraka$ and $d_1,\ldots,d_M\in\ZZ$ such that $(\alpha,\beta)\mapsto\gamma(\alpha,\beta)\tilde{K}_{\alpha,\beta}$ is an entire function $\fraka_\CC^*\oplus\fraka_\CC^*\to\calD'(K)$ (see Theorem~\ref{thm:MeromorphicPropertiesKtilde}~(2)). We show that $\gamma(\alpha,\beta)A(\alpha,\beta)$ is holomorphic in $\alpha,\beta\in\fraka_\CC^*$ with values in $L(C^\infty(X),C^\infty(X_H))$. Note that this function is holomorphic if and only if for every $f\in C^\infty(X)$ the map $(\alpha,\beta)\mapsto\gamma(\alpha,\beta)A(\alpha,\beta)f$ is holomorphic on $\fraka_\CC^*\oplus\fraka_\CC^*$ with values in $C^\infty(X_H)$. To prove this it is enough to see that for any $\phi\in C^\infty(X_H)'=\calD'(X_H)$ the scalar function $(\alpha,\beta)\mapsto\gamma(\alpha,\beta)\langle\phi,A(\alpha,\beta)f\rangle$ is meromorphic because weakly holomorphic implies holomorphic. Define $g(k,k_H)=f(k_H k)$ which is a smooth function on $K \times K_H$. Then $\langle\phi,g\rangle$ is a smooth function on $K$ and we have
$$ \gamma(\alpha,\beta)\langle\phi,A(\alpha,\beta)f\rangle = \gamma(\alpha,\beta)\langle\tilde{K}_{\alpha,\beta},\langle\phi,g\rangle\rangle $$
which is holomorphic by the choice of $\gamma(\alpha,\beta)$. This proves statement (2). For statement (3) we have to show
$$ A(\alpha,\beta)\circ\pi_\nu(h) = \tau_{\nu'}(h)\circ A(\alpha,\beta) \qquad \forall\,h\in H. $$
with $\nu=\nu(\alpha,\beta)$ and $\nu'=\nu'(\alpha,\beta)$. Since this identity is meromorphic in $\alpha,\beta\in\fraka_\CC^*$ it suffices to show it for $\Re\alpha,\Re\beta\in\fraka_\CC^+$ where the integral converges absolutely. In this case we have, using formulas \eqref{eq:IntFormulaGActionOnK}, \eqref{eq:ActionOnCinftyK/M} and Proposition~\ref{prop:KernelEquivariance}
\begin{align*}
 & (A(\alpha,\beta)\pi_\nu(h)f)(k_H)\\
 ={}& \int_K K_{\alpha,\beta}(k,k_H)e^{-(\nu+\rho)H(h^{-1}k)}f(\kappa(h^{-1}k)) \td k\\
 ={}& \int_K K_{\alpha,\beta}(h^{-1}k,h^{-1}k_H)e^{-(\nu+\rho)H(h^{-1}k)}f(\kappa(h^{-1}k)) \td k\\
 ={}& \int_K e^{(-w_0\alpha+\sigma\beta-w_0\beta)H(h^{-1}k)}e^{\alpha H(h^{-1}k_H)}K_{\alpha,\beta}(\kappa(h^{-1}k),\kappa(h^{-1}k_H))\\
 & \hspace{7cm}e^{-(\nu+\rho)H(h^{-1}k)}f(\kappa(h^{-1}k)) \td k\\
 ={}& e^{\alpha H(h^{-1}k_H)} \int_K K_{\alpha,\beta}(\kappa(h^{-1}k),\kappa(h^{-1}k_H))f(\kappa(h^{-1}k))e^{-2\rho H(h^{-1}k)} \td k\\
 ={}& e^{\alpha H(h^{-1}k_H)} \int_K K_{\alpha,\beta}(k,\kappa(h^{-1}k_H))f(k) \td k\\
 ={}& e^{\alpha H(h^{-1}k_H)} (A(\alpha,\beta)f)(\kappa(h^{-1}k_H)).
\end{align*}
Here we used for the third equality that $h^{-1}k_H\in H$ decomposes according to the decomposition $H=K_HM_HA_HN_H$ into
$$ h^{-1}k_H = \kappa(h^{-1}k_H)m_Hae^{H(h^{-1}k_H)}n_H $$
with $m_H\in M_H$, $a\in A_H\cap M$ and $n_H\in N_H$. This further implies that
$$ (\tau_{\nu'}(h)A(\alpha,\beta)f)(k_H) = e^{-(\nu'+\rho_H)H(h^{-1}k_H)}a^{-(\nu'+\rho_H)}f(\kappa(h^{-1}k_H)). $$
By the definition of $\nu'$ we have $(\nu'+\rho_H)|_{\fraka_H\cap\frakm}=0$ and hence $a^{-(\nu'+\rho_H)}=\1$ and the intertwining identity follows.
\end{proof}

\begin{remark}
The result in Theorem~\ref{thm:ConvergenceMeromorphicContinuationIntertwiners} is abstract and does not provide any information about the location of the poles of $A(\alpha,\beta)$ and their nature. In Section~\ref{sec:BernsteinSatoIdentities} we outline a possible method to study the residues.
\end{remark}

Recall the identifications $I^G(\nu)\cong C^\infty(X)$ and $I^H(\nu')\cong C^\infty(X_H)$.

\begin{corollary}\label{cor:IntertwinersNoncptPicture}
For $\Re\alpha,\Re\beta\in\fraka_+^*$ and $\nu,\nu'$ as in \eqref{eq:ParameterRelation} the intertwining operator $\tilde{A}(\alpha,\beta):I^G(\nu)\to I^H(\nu')$ corresponding to $A(\alpha,\beta):C^\infty(X)\to C^\infty(X_H)$ is given by the convergent integrals
\[ A(\alpha,\beta)f(h) = \int_K K_{\alpha,\beta}(k,h)f(k) \td k = \int_{\overline{N}} K_{\alpha,\beta}(\overline{n},h)f(\overline{n}) \td\overline{n}, \qquad h\in H. \]
\end{corollary}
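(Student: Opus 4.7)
The plan is to derive both equalities by unwinding how the operator $A(\alpha,\beta)$ in the compact realization corresponds to the operator $\tilde{A}(\alpha,\beta)$ between induced modules, and then applying the integration formula \eqref{eq:IntFormulaKNbar}. Convergence on an open set of parameters is already provided by Theorem~\ref{thm:ConvergenceMeromorphicContinuationIntertwiners}~(1), so the task is purely computational.

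First, I would establish the first equality. Under the isomorphism $I^H(\nu')\cong C^\infty(X_H)$, an element $F\in I^H(\nu')$ satisfies $F(h)=a_H^{-\nu'-\rho'}F(k_H)$ for $h=k_Hm_Ha_Hn_H$ (using the convention of Section~\ref{sec:InducedRepresentations}). By Proposition~\ref{prop:KernelEquivariance}~(2) applied in the second variable,
\[ K_{\alpha,\beta}(k,h) = a_H^\alpha K_{\alpha,\beta}(k,k_H), \]
so $\int_K K_{\alpha,\beta}(k,h)f(k)\td k = a_H^\alpha\,A(\alpha,\beta)f(k_H)$. By the parameter relation \eqref{eq:ParameterRelation} we have $-\nu'-\rho'=\alpha|_{\fraka^\sigma_\CC}$, and since for $a_H\in A_H$ the $A$-part lies in $A\cap A_H\subseteq A^\sigma$ (condition~\eqref{eq:CondH} together with $A_H\subseteq H\subseteq G^\sigma$), one has $a_H^\alpha = a_H^{\alpha|_{\fraka^\sigma_\CC}} = a_H^{-\nu'-\rho'}$. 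Thus the integral over $K$ transforms correctly to represent $\tilde{A}(\alpha,\beta)f(h)$.

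For the second equality, I would apply the integration formula \eqref{eq:IntFormulaKNbar} to the function $F(k):=K_{\alpha,\beta}(k,h)f(k)$, which is right $(K\cap M)$-invariant by Proposition~\ref{prop:KernelEquivariance}~(2) (since the right $M$-equivariance factor equals $1$). Writing $\overline{n}=\kappa(\overline{n})m e^{H(\overline{n})}n$, the equivariance of $K_{\alpha,\beta}$ in the first variable gives
\[ K_{\alpha,\beta}(\kappa(\overline{n}),h) = e^{(w_0\alpha-\sigma\beta+w_0\beta)H(\overline{n})}\,K_{\alpha,\beta}(\overline{n},h), \]
while the $I^G(\nu)$-equivariance of $f$ yields $f(\kappa(\overline{n}))=e^{(\nu+\rho)H(\overline{n})}f(\overline{n})$. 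Combining these with the Jacobian $e^{-2\rho H(\overline{n})}$, the exponent in the integrand becomes $(w_0\alpha-\sigma\beta+w_0\beta+\nu-\rho)H(\overline{n})$, which vanishes precisely because of the parameter relation $\nu=-w_0\alpha+\sigma\beta-w_0\beta+\rho$ from \eqref{eq:ParameterRelation}. Hence the $K$-integral equals $\int_{\overline{N}}K_{\alpha,\beta}(\overline{n},h)f(\overline{n})\td\overline{n}$, finishing the argument.

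No step should be a serious obstacle; the only point requiring a little care is checking that the parameter identities used in both reductions are precisely those dictated by \eqref{eq:ParameterRelation}, which explains why the normalization constant in the first identity matches the $I^H(\nu')$-transformation law, and why the $\rho$-shift coming from \eqref{eq:IntFormulaKNbar} cancels against the first-variable equivariance factor in the second identity.
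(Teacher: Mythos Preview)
Your proposal is correct and follows essentially the same route as the paper: use the right $P_H$-equivariance of $K_{\alpha,\beta}$ together with $\nu'=-\alpha|_{\fraka^\sigma_\CC}-\rho_H$ for the first equality, and then apply \eqref{eq:IntFormulaKNbar} with the first-variable equivariance and $\nu=-w_0\alpha+\sigma\beta-w_0\beta+\rho$ so that the exponential factors cancel. The only cosmetic discrepancy is that the paper writes $\rho_H$ where you write $\rho'$.
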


\begin{proof}
The unique extension of a function $\varphi\in C^\infty(X_H)$ to $\tilde{\varphi}\in I^H(\nu')$ is given by
\[ \tilde{\varphi}(k_Hm_Ha_Hn_H) = a_H^{-\nu'-\rho_H}\varphi(k_H). \]
Therefore we obtain by Proposition~\ref{prop:KernelEquivariance}~(2) for $f\in I^G(\nu)$:
\begin{align*}
 \tilde{A}(\alpha,\beta)f(k_Hm_Ha_Hn_H) &= a_H^{-\nu'-\rho_H} A(\alpha,\beta)(f|_K)(k_H)\\
 &= a_H^\alpha \int_K K_{\alpha,\beta}(k,k_H)f(k) \td k\\
 &= \int_K K_{\alpha,\beta}(k,k_Hm_Ha_Hn_H)f(k) \td k.
\end{align*}
Further, using the integral formula~\eqref{eq:IntFormulaKNbar} we obtain
\begin{align*}
 \tilde{A}(\alpha,\beta)f(h) &= \int_{\overline{N}} K_{\alpha,\beta}(\kappa(\overline{n}),h)f(\kappa(\overline{n})) e^{-2\rho H(\overline{n})} \td\overline{n}\\
 &= \int_{\overline{N}} e^{(w_0\alpha-\sigma\beta+w_0\beta)H(\overline{n})}K_{\alpha,\beta}(\overline{n},h) e^{(\nu+\rho)H(\overline{n})}f(\overline{n}) e^{-2\rho H(\overline{n})} \td\overline{n}\\
 &= \int_{\overline{N}} K_{\alpha,\beta}(\overline{n},h)f(\overline{n}) \td\overline{n},
\end{align*}
finishing the proof.
\end{proof}

\begin{remark}
Using Corollary~\ref{cor:IntertwinersNoncptPicture} the intertwining operators $\tilde{A}(\alpha,\beta)$ can be studied in the \textit{non-compact picture}. The non-compact picture is obtained by restricting functions in $I^G(\nu)$ and $I^H(\nu')$ to $\overline{N}$ and $\overline{N}_H$, respectively. Then $\tilde{A}(\alpha,\beta)$ is an integral operator on flat space with kernel $K_{\alpha,\beta}(\overline{n},\overline{n}_H)$ on $N\times N_H$. In the case $(G,H)=(\upO(1,n),\upO(1,n-1))$ these operators were investigated earlier by Kobayashi--Speh~\cite{KS,KS14} (see also \cite{Kob13,MO12}).
\end{remark}

\begin{remark}\label{rem:InvDistVectors}
By the Schwartz Kernel Theorem the intertwining operators $A(\alpha,\beta):C^\infty(X)\to C^\infty(X_H)$ are given by distribution kernels in $\calD'(X\times X_H)$. For $\Re\alpha,\Re\beta\in\fraka_+^*$ these kernels are precisely $K_{\alpha,\beta}|_{K\times K_H}$. Abusing notation we also write $K_{\alpha,\beta}$ for their meromorphic extension in $\alpha,\beta\in\fraka_\CC^*$. With $\nu,\nu'$ as in \eqref{eq:ParameterRelation} these distributions are $\Delta(H)$-invariant distribution vectors for the representations $\pi_\nu\otimes\tau_{\nu'}$ of $G\times H$ on $C^\infty(X\times X_H)$, i.e.
\[ K_{\alpha,\beta}\in(\pi_\nu\otimes\tau_{\nu'})^{-\infty,\Delta(H)}. \]
\end{remark}

\subsection{Induction parameters}\label{sec:InductionParameters}

We study the relation \eqref{eq:ParameterRelation} between the parameters $\alpha,\beta\in\fraka_\CC^*$ of the kernel $K_{\alpha,\beta}$ and the induction parameters $\nu\in\fraka_\CC^*$ and $\nu'\in(\fraka_H)_\CC^*$.

First we note that the condition $\nu'=-\alpha|_{\fraka^\sigma_\CC}-\rho_H$ means that $\nu'\equiv-\alpha-\rho_H$ on $\fraka^\sigma=\fraka_H\cap\fraka$ and $\nu'\equiv-\rho_H$ on $\fraka_H\cap\frakm$ in the decomposition $\fraka_H=(\fraka_H\cap\fraka)\oplus(\fraka_H\cap\frakm)$. This gives the necessary condition
\begin{equation*}
 (\nu'+\rho_H)|_{\fraka_H\cap\frakm} = 0.
\end{equation*}

By Lemma~\ref{lem:SigmaW0Commute} we obtain the joint eigenspace decomposition for $w_0$ and $\sigma$:
\[ \fraka = (\fraka^\sigma\cap\fraka^{-w_0})\oplus(\fraka^\sigma\cap\fraka^{w_0})\oplus(\fraka^{-\sigma}\cap\fraka^{-w_0})\oplus(\fraka^{-\sigma}\cap\fraka^{w_0}), \]
where $\fraka^{\pm w_0}=\{H\in\fraka:w_0H=\pm H\}$. 

We first consider the case where $w_0|_\fraka=-1$.
The relation \eqref{eq:ParameterRelation} between $\alpha,\beta$ and $\nu,\nu'$ then reads
\[ \nu = \alpha+\beta+\sigma\beta+\rho, \qquad \nu' = -\alpha|_{\fraka^\sigma_\CC}-\rho_H. \]
By Remark~\ref{rem:KernelDependenceOnBeta} the kernel $K_{\alpha,\beta}(g,h)$ does not depend on the values of $\beta$ on $\fraka_\CC^{-\sigma}$, and neither do $\nu$ and $\nu'$. Hence we may assume $\beta|_{\fraka^{-\sigma}}=0$, i.e.\ $\beta\in(\fraka^\sigma)_\CC^*$ and the relations read
\[ \nu = \alpha+2\beta+\rho, \qquad \nu' = -\alpha|_{\fraka^\sigma_\CC}-\rho_H. \]
The following result is immediate:

\begin{lemma}
Assume that $w_0|_\fraka=-1$. Then the map
\[ (\alpha,\beta)\mapsto(\nu,\nu')=(\alpha+2\beta+\rho,-\alpha|_{\fraka^\sigma_\CC}-\rho_H) \]
defines a bijection
\[ \fraka_\CC^*\oplus(\fraka^\sigma)_\CC^* \to \{(\nu,\nu')\in\fraka_\CC^*\oplus(\fraka_H)_\CC^*:(\nu'+\rho_H)|_{\fraka_H\cap\frakm}=0\}. \]
\end{lemma}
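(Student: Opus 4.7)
The plan is to solve for $(\alpha,\beta)$ explicitly in terms of $(\nu,\nu')$ and check that the resulting formulas define the inverse map. Under the hypothesis $w_0|_\fraka=-1$, the relations \eqref{eq:ParameterRelation} simplify: $-w_0\alpha=\alpha$ and $-w_0\beta=\beta$, so $\nu=\alpha+\sigma\beta+\beta+\rho$ and $\nu'=-\alpha|_{\fraka^\sigma_\CC}-\rho_H$. Since we restrict to $\beta\in(\fraka^\sigma)_\CC^*$, viewed inside $\fraka_\CC^*$ as vanishing on $\fraka^{-\sigma}$, we have $\sigma\beta=\beta$, and the first relation collapses to $\nu=\alpha+2\beta+\rho$.

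Now I split $\fraka=\fraka^\sigma\oplus\fraka^{-\sigma}$ and dually $\fraka_\CC^*=(\fraka^\sigma)_\CC^*\oplus(\fraka^{-\sigma})_\CC^*$, and I use the decomposition $\fraka_H=(\fraka_H\cap\frakm)\oplus\fraka^\sigma$ (noting that $\fraka_H\cap\fraka=\fraka^\sigma$ since $\fraka_H$ consists of $\sigma$-fixed elements of $\fraka$). The constraint $(\nu'+\rho_H)|_{\fraka_H\cap\frakm}=0$ means that $\nu'+\rho_H$ is determined by its restriction to $\fraka^\sigma_\CC$. On the $\fraka^{-\sigma}_\CC$ component, $\beta$ vanishes, so $\nu=\alpha+\rho$ there and I set
\[ \alpha|_{\fraka^{-\sigma}_\CC}=(\nu-\rho)|_{\fraka^{-\sigma}_\CC}. \]
The relation for $\nu'$ forces
\[ \alpha|_{\fraka^\sigma_\CC}=-(\nu'+\rho_H)|_{\fraka^\sigma_\CC}, \]
well-defined thanks to the compatibility condition. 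Finally, having pinned down $\alpha$, the formula $\nu=\alpha+2\beta+\rho$ on $\fraka^\sigma_\CC$ forces
\[ \beta=\tfrac{1}{2}\bigl(\nu-\alpha-\rho\bigr)\big|_{\fraka^\sigma_\CC}\in(\fraka^\sigma)_\CC^*. \]

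Plugging these back shows the forward map is hit, and the argument simultaneously shows uniqueness, so the map is a bijection. There is no real obstacle: once the two splittings $\fraka=\fraka^\sigma\oplus\fraka^{-\sigma}$ and $\fraka_H=(\fraka_H\cap\frakm)\oplus\fraka^\sigma$ are in place, this is linear algebra, and a quick dimension count confirms the answer—the domain has complex dimension $\dim\fraka+\dim\fraka^\sigma$, and the target is cut out inside $\fraka_\CC^*\oplus(\fraka_H)_\CC^*$ by $\dim(\fraka_H\cap\frakm)$ linear equations, giving $\dim\fraka+\dim\fraka_H-\dim(\fraka_H\cap\frakm)=\dim\fraka+\dim\fraka^\sigma$. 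The only care needed is the bookkeeping between $\alpha|_{\fraka^\sigma_\CC}$ as an element of $(\fraka^\sigma)_\CC^*$ versus as a functional on $\fraka_\CC$ extended by zero on $\fraka^{-\sigma}_\CC$, and analogously for $\nu'+\rho_H$ relative to $\fraka_H$; these identifications are fixed by the decompositions above.
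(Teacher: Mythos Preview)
Your proof is correct. The paper itself declares this lemma ``immediate'' and gives no argument, so your explicit construction of the inverse via the splittings $\fraka=\fraka^\sigma\oplus\fraka^{-\sigma}$ and $\fraka_H=(\fraka_H\cap\frakm)\oplus\fraka^\sigma$ is exactly the linear algebra the paper leaves to the reader.
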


This gives the largest possible set of induction parameters $\nu$ and $\nu'$ that can be treated with the kernel $K_{\alpha,\beta}(g,h)$.

Returning to the general case, we may assume that $\beta=0$ on $\fraka^{w_0\sigma}$ by Remark~\ref{rem:KernelDependenceOnBeta}.  Then we have

\begin{lemma}\label{lem:Parameters2}
The map
\begin{multline*}
 \fraka_\CC^*\oplus(\fraka^{-w_0\sigma})_\CC^* \to \{(\nu,\nu')\in\fraka_\CC^*\oplus(\fraka_H)_\CC^*:(\nu'+\rho_H)|_{(\fraka_H\cap\frakm)}=0,\\
 (\nu'+\rho_H)|_{\fraka^\sigma\cap\fraka^{w_0}}=(\nu-\rho)|_{\fraka^\sigma\cap\fraka^{w_0}}\}
\end{multline*}
given by
\[ (\alpha,\beta) \mapsto (\nu,\nu')=(-w_0\alpha+2\sigma\beta+\rho,-\alpha|_{\fraka^\sigma_\CC}-\rho_H) \]
is surjective and has fibers
\[ (\alpha_0,\beta_0) + \{(2\gamma,-\gamma):\gamma\in(\fraka^{-\sigma}\cap\fraka^{w_0})_\CC^*\}. \]
\end{lemma}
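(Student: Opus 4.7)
The proof is pure linear algebra built on the joint eigenspace decomposition of $\fraka_\CC^*$ under the commuting involutions $\sigma$ and $w_0$ (they commute on $\fraka$ by Lemma~\ref{lem:SigmaW0Commute}). The plan is to write
\[ \fraka_\CC^* = V_1 \oplus V_2 \oplus V_3 \oplus V_4, \]
with $V_1=(\fraka^\sigma\cap\fraka^{-w_0})_\CC^*$, $V_2=(\fraka^\sigma\cap\fraka^{w_0})_\CC^*$, $V_3=(\fraka^{-\sigma}\cap\fraka^{-w_0})_\CC^*$, $V_4=(\fraka^{-\sigma}\cap\fraka^{w_0})_\CC^*$. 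Under these identifications $(\fraka^{-w_0\sigma})_\CC^*=V_1\oplus V_4$ (the pieces where $\sigma$ and $w_0$ act with opposite signs), $(\fraka^\sigma)_\CC^*=V_1\oplus V_2$, and the restriction to $\fraka^\sigma\cap\fraka^{w_0}$ is the projection onto $V_2$.

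Decomposing $\alpha=\alpha_1+\alpha_2+\alpha_3+\alpha_4$, $\beta=\beta_1+\beta_4$ and $\nu-\rho=\mu_1+\mu_2+\mu_3+\mu_4$ along the $V_i$, the sign pattern of $w_0$ and $\sigma$ on each $V_i$ turns $\nu-\rho=-w_0\alpha+2\sigma\beta$ into the componentwise system
\[ \mu_1=\alpha_1+2\beta_1, \qquad \mu_2=-\alpha_2, \qquad \mu_3=\alpha_3, \qquad \mu_4=-\alpha_4-2\beta_4. \]
Well-definedness in the target is then immediate: $(\nu'+\rho_H)|_{\fraka_H\cap\frakm}=0$ holds by the very definition of $\nu'=-\alpha|_{\fraka^\sigma_\CC}-\rho_H$ (which lives on $\fraka^\sigma$ and is extended by zero to $\fraka_H\cap\frakm$), and since $\beta\in V_1\oplus V_4$ vanishes on $\fraka^\sigma\cap\fraka^{w_0}$, so does $\sigma\beta$, so the second equation reads $(\nu-\rho)|_{V_2}=-\alpha_2=(\nu'+\rho_H)|_{V_2}$, matching the cross-compatibility condition.

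For surjectivity, given $(\nu,\nu')$ in the target, the equation $-\alpha|_{\fraka^\sigma_\CC}=\nu'+\rho_H$ uniquely forces $\alpha_1$ and $\alpha_2$, with $\alpha_2$ automatically compatible with $\mu_2$ by the hypothesis. Then $\alpha_3=\mu_3$ and $\beta_1=\tfrac{1}{2}(\mu_1-\alpha_1)$ are determined, while the single remaining equation $\alpha_4+2\beta_4=-\mu_4$ in $V_4$ admits a one-parameter family of solutions with kernel $\{(-2\gamma,\gamma):\gamma\in V_4\}$. After replacing $\gamma$ by $-\gamma$ this kernel becomes the stated fiber $\{(2\gamma,-\gamma):\gamma\in(\fraka^{-\sigma}\cap\fraka^{w_0})_\CC^*\}$. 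No real obstacle is expected; the only bookkeeping subtlety is that $\sigma$ preserves $(\fraka^{-w_0\sigma})_\CC^*$ (because $\sigma w_0=w_0\sigma$), ensuring that $\sigma\beta$ has the expected $V_1\oplus V_4$ expansion used throughout.
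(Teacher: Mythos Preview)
The paper states this lemma without proof, so there is nothing to compare against; your argument is exactly the straightforward linear-algebra verification one would expect. It is correct: the joint eigenspace decomposition for the commuting involutions $\sigma$ and $w_0$ on $\fraka_\CC^*$ reduces the map to the four scalar equations you wrote, and the only underdetermined one, $\mu_4=-\alpha_4-2\beta_4$ on $V_4=(\fraka^{-\sigma}\cap\fraka^{w_0})_\CC^*$, yields precisely the stated fiber $\{(2\gamma,-\gamma):\gamma\in V_4\}$.
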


Again this gives the largest possible set of induction parameters $\nu$ and $\nu'$ that can be treated with the kernel $K_{\alpha,\beta}(g,h)$. However, since the map $(\alpha,\beta)\mapsto(\nu,\nu')$ is not necessarily injective there might be several different integral kernels $K_{\alpha,\beta}(g,h)$ which define intertwining operators on the same representations $I^G(\nu)\times I^H(\nu')$.

\subsection{Uniqueness}\label{sec:Uniqueness}

In this section we outline a method to study generic bounds for the dimension of the space of intertwining operators in the case where there exists an open orbit of $\Delta(H)$ on $G/P\times H/P_H$. This method is applied in Section~\ref{sec:RankOne} to the rank one examples from Section~\ref{sec:ExRankOne}. We expect this method to work also in other cases.

\subsubsection{Invariant distributions}\label{sec:InvariantDistributions}

Note that the non-degenerate invariant bilinear form
$$ I^H(\nu')\times I^H(-\nu')\to\CC, \quad (f_1,f_2)\mapsto\int_{K_H}f_1(k)f_2(k)\td k $$
induces an $H$-invariant embedding $I^H(\nu')\to I^H(-\nu')'$ into the dual representation $I^H(-\nu')'$. This allows us to view each intertwining operator $A:I^G(\nu)\to I^H(\nu')$ as an intertwiner $I^G(\nu)\to I^H(-\nu')'$. The method we present even works for intertwining operators $A:I^G(\nu)\to I^H(-\nu')'$.

To every intertwining operator $A:I^G(\nu)\to I^H(-\nu')'$ we associate a distribution $K\in\calD'(G\times H)$ which is equivariant under the action of the group $L=H\times P\times P_H$ where $H$ acts diagonally by left-multiplication and $P\times P_H$ by right-multiplication. For this we need to associated to each test function in $C_c^\infty(G)$ and $C_c^\infty(H)$ a function in $I^G(\nu)$ and $I^H(\nu')$, respectively.

Let $\td p$ be a left-invariant measure on $P$ and let $\Delta_P$ be the modular function for $P$, i.e.
\begin{equation*}
 \int_P f(pp')\td p = \Delta_P(p')^{-1}\int_P f(p)\td p, \qquad p'\in P.
\end{equation*}
Then $\Delta_P$ can be written as $\Delta_P(p)=|\det\Ad(p^{-1})|=a^{-2\rho}$ for $p=man\in MAN$. Let $\varpi$ be a character of $P$. For $f\in C_c^\infty(G)$ put
\begin{equation}
 \flat f(g) := \int_P \Delta_P(p)^{-\frac{1}{2}}\varpi(p)f(gp)\td p.\label{eq:DefFlat}
\end{equation}
Denote by $r(p)$ the right-regular representation of $p\in P$ on $C^\infty(G)$, i.e.\ $r(p)f(g)=f(gp)$.

\begin{lemma}[{\cite[Lemma 4.6]{KV96}}]\label{lem:FlatOperator}
The identity \eqref{eq:DefFlat} defines a surjective continuous linear operator $\flat:C_c^\infty(G)\to\Ind_P^G(\varpi)$ which is $G$-equivariant and satisfies
\begin{equation}
 \flat\circ r(p) = \Delta_P(p)^{-\frac{1}{2}}\varpi(p)^{-1}\flat, \qquad p\in P.\label{eq:EquivarianceFlatB}
\end{equation}
\end{lemma}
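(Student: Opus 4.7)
The plan is to verify the listed properties in order, with surjectivity as the only genuinely non-trivial step. Well-definedness of $\flat f(g)$ and continuity of $\flat\colon C_c^\infty(G)\to C^\infty(G)$ are immediate: for $f\in C_c^\infty(G)$ the integrand in \eqref{eq:DefFlat} is compactly supported in $p\in g^{-1}\textup{supp}(f)\cap P$, locally uniformly in $g$, so differentiation under the integral sign applies and shows $\flat f\in C^\infty(G)$ depending continuously on $f$. The transformation property identifying $\flat f$ as an element of $\Ind_P^G(\varpi)$ and the intertwining formula \eqref{eq:EquivarianceFlatB} both reduce to routine changes of variable (namely $q=p'p$ for the left side and $q=pp'$ for the right side) that exploit the left-invariance of $\td p$, the defining relation $\int_P F(pp')\,\td p=\Delta_P(p')^{-1}\int_P F(p)\,\td p$, and the character identity $\Delta_P(qp'^{\pm1})=\Delta_P(q)\Delta_P(p')^{\pm1}$ (and analogously for $\varpi$). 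The $G$-equivariance $\flat\circ l(g_0)=l(g_0)\circ\flat$ is transparent from the definition, since left translations on $G$ commute with the right integration over $P$.

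The main step, and the only real obstacle, is surjectivity. The geometric input is compactness of $G/P\cong K/(K\cap M)$, which follows from the Iwasawa-type decomposition $G=KP$. Given $\phi\in\Ind_P^G(\varpi)$, the strategy is to produce a preimage of the form $f=\eta\cdot\phi/c$, where $\eta\in C_c^\infty(G)$ is a nonnegative bump function and
\[ c(g):=\int_P \eta(gp)\Delta_P(p)\,\td p \]
is strictly positive on all of $G$. The existence of such $\eta$ uses compactness of $G/P$ as follows: $c$ transforms as $c(gp_0)=\Delta_P(p_0)^{-1}c(g)$, so positivity on $G$ is equivalent to positivity on any set meeting every $P$-coset. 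Pick $\eta_0\in C_c^\infty(G)$ with $\eta_0(e)>0$; by continuity the corresponding $c_0$ is positive on a neighborhood $U$ of $e$, hence on $UP$; then select finitely many $g_1,\dots,g_n\in G$ with $\bigcup_i g_iU\cdot P=G$ (possible because $G/P$ is compact) and set $\eta:=\sum_i \eta_0(g_i^{-1}\blank)$. The associated $c$ is then a finite sum of translates of $c_0$ covering $G$, so $c>0$ everywhere.

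With such an $\eta$ in hand, the function $\phi/c$ is well-defined and smooth on $G$, transforming under right multiplication by $P$ as $(\phi/c)(gp_0)=\Delta_P(p_0)^{3/2}\varpi(p_0)^{-1}(\phi/c)(g)$. Consequently $f:=\eta\cdot\phi/c$ is a compactly supported smooth function, and a direct substitution using the $P$-transformation laws of $\phi$ and $c$ gives
\[ \flat f(g)=\int_P \Delta_P(p)^{-1/2}\varpi(p)\,\eta(gp)\,(\phi/c)(gp)\,\td p = \phi(g)\,c(g)^{-1}\int_P \eta(gp)\Delta_P(p)\,\td p = \phi(g), \]
so $\flat f=\phi$. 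All remaining verifications being routine manipulations with the modular function and the character $\varpi$, the step that genuinely uses global structure is the construction of the cutoff $\eta$, which rests on compactness of $G/P$.
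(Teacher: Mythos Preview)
The paper does not supply a proof of this lemma; it is quoted verbatim from \cite[Lemma~4.6]{KV96}. Your argument is correct and is essentially the standard one (and close to the original in Kolk--Varadarajan): verify the equivariance properties by the two obvious substitutions in the $P$-integral, and establish surjectivity via a cutoff $\eta\in C_c^\infty(G)$ whose averaged function $c(g)=\int_P\eta(gp)\Delta_P(p)\,\td p$ is everywhere positive, using compactness of $G/P\cong K/(K\cap M)$.

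One small clarification worth making explicit: when you build $\eta$ as a finite sum $\sum_i\eta_0(g_i^{-1}\,\cdot\,)$, you need $\eta_0\geq0$ (not merely $\eta_0(e)>0$) so that the corresponding $c=\sum_i c_0(g_i^{-1}\,\cdot\,)$ is positive without risk of cancellation between summands. You announce at the outset that $\eta$ is nonnegative, so this is clearly the intent; just carry the hypothesis through to $\eta_0$.
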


Denote by $\flat_G$ and $\flat_H$ the corresponding operators for $G$ and $H$ with induction parameters $\varpi=e^\nu$ and $\varpi=e^{-\nu'}$, respectively. We note that the transpose $\flat_H^t$ defines an $H$-equivariant injective continuous linear operator $I^H(\nu')'\to\calD'(H)$.

Now let $A:I^G(\nu)\to I^H(-\nu')'$ be a continuous intertwining operator for the action of $H$. Then the composition $\flat_H^t\circ A\circ\flat_G$ is a continuous linear operator $C_c^\infty(G)\to\calD'(H)$ and hence, thanks to the Schwartz Kernel Theorem, given by a distribution kernel $K_A\in\calD'(G\times H)$, i.e.
$$ \int_H (A\flat_G\phi)(h)(\flat_H\psi)(h) \td h = \langle K_A, \phi\otimes\psi\rangle, \qquad \phi\in C_c^\infty(G),\psi\in C_c^\infty(H). $$
Then $K_A$ is $H$-left invariant and by \eqref{eq:EquivarianceFlatB} it transforms under the right-action of $P\times P_H$ by
\begin{equation*}
 r(man,m_Ha_Hn_H)K_A = a^{\nu-\rho}a_H^{-\nu'-\rho_H}K_A.
\end{equation*}

\begin{remark}
Note that for $A=\tilde{A}(\alpha,\beta):I^G(\nu)\to I^H(\nu')\subseteq I^H(-\nu')'$ as in Corollary~\ref{cor:IntertwinersNoncptPicture} we have $K_A=K_{\alpha,\beta}$, the kernel defined in Section~\ref{sec:InvriantKernel}.
\end{remark}

Let $L=H\times P\times P_H$ act on $G\times H$ by $(h',p,p_H)\cdot(g,h)=(h'gp^{-1},h'hp_H^{-1})$. Define a character $\beta$ of $L$ by $\beta(h,man,m_Ha_Hn_H):=a^{\nu-\rho}a_H^{-\nu'-\rho_H}$. Let $E=\CC_\beta$ be the corresponding one-dimensional representation of $L$ on $\CC$. The group $L$ acts on $C_c^\infty(G\times H,E)$ by
\begin{equation*}
 x\cdot\varphi(y)=\beta(x)\varphi(x^{-1}\cdot y)=\beta(x)(x\cdot\varphi)(y).
\end{equation*}
By duality we obtain an action of $L$ on $\calD'(G\times H,E)=C_c^\infty(G\times H,E)'$ which is given by $\langle x\cdot u,\varphi\rangle=\langle u,x^{-1}\cdot\varphi\rangle$. Then our calculation above shows that the distribution $K_A$ arising from the intertwining operator $A$ is invariant under $L$. Denote the space of $L$-invariant distributions on $G\times H$ with values in $E$ by $\calD'(G\times H,E)^L$.

\begin{lemma}\label{lem:MapFormsToDistributions}
The map $A\mapsto K_A$ sending a continuous $H$-intertwining operator $A:I^G(\nu)\times I^H(-\nu')'$ to the invariant distribution $K_A\in\calD'(G\times H,E)^L$ is injective.
\end{lemma}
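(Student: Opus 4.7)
The plan is to reduce the injectivity of $A\mapsto K_A$ to the surjectivity statements in Lemma~\ref{lem:FlatOperator}, which is the only substantive input needed. The first observation is that, by the Schwartz Kernel Theorem, $K_A$ is nothing but the distribution kernel of the composition
\[ \flat_H^t \circ A \circ \flat_G : C_c^\infty(G) \longrightarrow \calD'(H). \]
In particular $K_A=0$ is equivalent to the operator identity $\flat_H^t\circ A\circ\flat_G = 0$, which unfolds as
\[ \langle A\flat_G\phi,\flat_H\psi\rangle_{I^H(-\nu')'\times I^H(-\nu')} = 0 \qquad \forall\,\phi\in C_c^\infty(G),\,\psi\in C_c^\infty(H). \]

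From here I would invoke Lemma~\ref{lem:FlatOperator} twice. First, since $\flat_H:C_c^\infty(H)\to I^H(-\nu')$ is continuous and \emph{surjective}, its transpose $\flat_H^t:I^H(-\nu')'\to\calD'(H)$ is injective by the standard duality argument: any element of $\ker\flat_H^t$ is a continuous linear functional on $I^H(-\nu')$ annihilating the image of $\flat_H$, hence vanishing on all of $I^H(-\nu')$. Applying this, the vanishing $\flat_H^t\circ A\circ\flat_G=0$ forces $A\circ\flat_G=0$. Second, the surjectivity of $\flat_G:C_c^\infty(G)\to I^G(\nu)$, also provided by Lemma~\ref{lem:FlatOperator}, then upgrades $A\circ\flat_G=0$ to $A=0$ on all of $I^G(\nu)$, completing the argument.

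No serious obstacle is anticipated: the argument is purely formal once the surjectivity of both $\flat_G$ and $\flat_H$ is available, and the single non-trivial ingredient is Lemma~\ref{lem:FlatOperator} itself. The only items to verify carefully are (i) that the Schwartz Kernel Theorem indeed applies to $\flat_H^t\circ A\circ\flat_G$ as a continuous operator $C_c^\infty(G)\to\calD'(H)$, which follows from the continuity statements in Lemma~\ref{lem:FlatOperator} and the continuity of $A$, and (ii) the dualization step above, which is standard for continuous surjections between the relevant Fr\'{e}chet or LF-type spaces.
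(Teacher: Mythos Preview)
Your argument is correct and is exactly the natural one: injectivity follows immediately from the surjectivity of $\flat_G$ and $\flat_H$ (Lemma~\ref{lem:FlatOperator}) together with the Schwartz Kernel Theorem. The paper in fact states this lemma without proof, so there is nothing further to compare.
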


This allows us to study uniqueness of intertwining operators by studying uniqueness of invariant distributions.

\begin{remark}
It would be more natural to consider $H$-invariant distributions on $G/P\times H/P_H$ with values in a line bundle instead of $L$-invariant distributions on $G\times H$ with values in $E$. However, to be able to directly apply the results in \cite{KV96} we use the latter formulation.
\end{remark}

\subsubsection{Bruhat's Vanishing Theorem}

For the uniqueness question of invariant distributions it is essential to have an open orbit of $H$ on the double flag variety $G/P\times H/P_H$. On each open orbit we can use the following classical fact (see e.g.\ \cite[Lemma 3.2]{Dei06}):

\begin{fact}\label{fct:InvDistributionsOnHomSpaces}
Let $X=H/S$ be an $H$-homogeneous space and $\calE=H\times_SE$ a smooth $H$-homogeneous vector bundle. Then every $H$-invariant distribution $u\in\calD'(X,\calE)^H$ is given by a smooth $H$-invariant section of the dual bundle $\calE^*$. Moreover, we have $\dim\calD'(X,\calE)^H=\dim (E^*)^S$. In particular, if $\calE$ is irreducible, it only has non-trivial invariant sections if it is trivial and in this case the space of invariant sections is one-dimensional.
\end{fact}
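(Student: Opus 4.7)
The plan is to pull back distributional sections from $X$ to the group $H$ along the quotient projection $\pi\colon H\to H/S=X$ and reduce the problem to the uniqueness of Haar measure. First, I would identify smooth sections of $\calE=H\times_S E$ with smooth $S$-equivariant $E$-valued functions on $H$, namely $f\colon H\to E$ satisfying $f(hs)=s^{-1}f(h)$, and extend this identification to the distributional level: an element $u\in\calD'(X,\calE)$ corresponds to an $E$-valued distribution $\tilde u$ on $H$ which is right-$S$-equivariant in the same sense, appropriately twisted by the ratio of modular characters of $H$ and $S$ when $X$ carries no $H$-invariant density. Under this correspondence the $H$-action on $\calD'(X,\calE)$ becomes left translation on $\tilde u$, so $u\in\calD'(X,\calE)^H$ if and only if $\tilde u$ is left-$H$-invariant.

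Next, I would invoke the vector-valued uniqueness of Haar measure: every left-$H$-invariant $E$-valued distribution on $H$ has the form $\xi\cdot dh$ for a single vector $\xi$ in the fiber at the identity, where $dh$ is left Haar measure. This is because the left action of $H$ on itself is simply transitive, so an invariant distribution is determined by its germ at any single point, and scalar Haar uniqueness applied componentwise forces proportionality to $dh$. In particular $\tilde u$ is smooth, so pushing back along $\pi$ shows that $u$ itself is represented by a smooth section of $\calE$, equivalently, under the duality implicit in the statement, of $\calE^*$. This already proves the first claim that every $H$-invariant distributional section is smooth.

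Finally, decoding the right $S$-equivariance on the representing vector $\xi$ yields an equation of the form $s\cdot\xi=\chi(s)\,\xi$ for all $s\in S$, where $\chi$ is the character by which $S$ acts on the density line at the base point $eS$. With the conventions implicit in the statement, whereby $\calD'(X,\calE)$ pairs with compactly supported test sections of $\calE^*\otimes|\Omega_X|$, this character gets absorbed into the duality and the space of admissible $\xi$ is identified with $(E^*)^S$. This gives both the dimension formula $\dim\calD'(X,\calE)^H=\dim(E^*)^S$ and the irreducibility consequence: by Schur's lemma, $(E^*)^S\neq 0$ for an irreducible $S$-representation $E$ forces $E$ (hence $E^*$) to be trivial, in which case $(E^*)^S$ is one-dimensional.

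The main obstacle I expect is the bookkeeping of modular characters and densities, since $X=H/S$ need not admit an $H$-invariant measure. One has to pin down once and for all whether distributional sections of $\calE$ are paired against test sections of $\calE^*$ or of $\calE^*\otimes|\Omega_X|$, because the clean identification with $(E^*)^S$ only holds for one convention; with the other convention the space acquires an explicit modular twist. Once this convention is fixed, the whole argument collapses to essentially a one-line consequence of Haar uniqueness, as treated for instance in \cite{KV96}.
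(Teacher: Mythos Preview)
The paper does not give its own proof of this statement; it records it as a classical fact with a reference to \cite[Lemma 3.2]{Dei06}. So there is no in-paper argument to compare against.

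Your sketch is the standard route and is essentially correct: pull back to $H$, reduce to left-invariant $E$-valued distributions on $H$, invoke uniqueness of Haar measure to get smoothness and a single fiber vector, and then read off the $S$-equivariance condition. You also correctly flag the one place where care is needed, namely the density/modular-character bookkeeping in the definition of $\calD'(X,\calE)$; once that convention is fixed (as in \cite{KV96}), the identification with $(E^*)^S$ is clean.
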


This statement assures uniqueness of invariant distributions on each open $H$-orbit in $G/P\times H/P_H$. However, there are also smaller $H$-orbits in the double flag variety on which more singular invariant distributions can be supported. To deal with these cases we use a powerful result by Bruhat (see e.g.\ \cite[Vanishing Theorem 3.15]{KV96} or \cite[Section 5.2]{War72}):

\begin{theorem}[Bruhat]\label{thm:KV}
Let $X$ be an $L$-space and $\calO\subseteq X$ a closed $L$-orbit, $\calO=L\cdot x_0\cong L/S$. Let $\beta$ be a differentiable representation of $L$ on a Fr\'{e}chet space $E$. Denote by $\calD'_\calO(X,E)$ the distributions on $X$ with values in $E$ which are supported on $\calO$ and by $\calD'_\calO(X,E)^L$ its invariants. Further, let $V=(T_{x_0}X)/(T_{x_0}\calO)$, the quotient of the tangent spaces at $x_0$ in $X$ and $\calO$, endowed with the natural action of $S$, and consider the symmetric powers $S^r(V_\CC)$ as $S$-representations. Finally let $\chi$ be the character of $S$ given by $\chi(g)=\frac{|\det\Ad_L(g)|}{|\det\Ad_S(g)|}$, the quotient of the modular functions of $L$ and $S$. Then
\begin{equation}
 \Hom_S(E\otimes\CC_\chi,S^r(V_\CC)) = 0 \quad \forall\,r\in\NN_0 \qquad \Rightarrow \qquad \calD'_\calO(X,E)^L = 0.\label{eq:VanishingThmCondition}
\end{equation}
\end{theorem}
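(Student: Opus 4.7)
The plan is to exhaust $\calD'_\calO(X, E)$ by an $L$-invariant filtration according to transversal jet order along $\calO$, identify each successive quotient with distributional sections of an $L$-homogeneous vector bundle on $\calO = L/S$, and then apply Fact~\ref{fct:InvDistributionsOnHomSpaces} to reduce the problem to the fibre condition \eqref{eq:VanishingThmCondition}.

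First I would introduce the filtration
\[
 0 = \calD'_{\calO,-1}(X,E) \subseteq \calD'_{\calO,0}(X,E) \subseteq \calD'_{\calO,1}(X,E) \subseteq \cdots \subseteq \calD'_\calO(X,E),
\]
where $\calD'_{\calO,r}(X,E)$ is the subspace of distributions annihilating $\calI_\calO^{r+1}\cdot C_c^\infty(X,E^*)$ and $\calI_\calO$ denotes the defining ideal of the closed submanifold $\calO\subseteq X$. Since a distribution supported on a closed submanifold is locally of finite transversal order, this filtration is exhaustive and $L$-invariant. The conclusion will follow by descending induction once I show that each graded piece has no $L$-invariants: given $u\in\calD'_\calO(X,E)^L$, a local finiteness argument places $u$ in some $\calD'_{\calO,r}$, and if the image of $u$ in the $r$-th graded piece must vanish then $u\in\calD'_{\calO,r-1}$, and so on down to zero.

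Next I would identify the graded pieces equivariantly. Choosing a tubular neighbourhood of $\calO$ (or more intrinsically, iterating the jet exact sequence $0\to\calI_\calO^{r+1}/\calI_\calO^{r+2}\to\calO_X/\calI_\calO^{r+2}\to\calO_X/\calI_\calO^{r+1}\to 0$) and Taylor-expanding test sections in the normal direction yields an $L$-equivariant isomorphism
\[
 \calD'_{\calO,r}(X,E)/\calD'_{\calO,r-1}(X,E) \;\cong\; \calD'\bigl(\calO,\; E|_\calO\otimes S^r(V^*_\CC)\otimes|\Lambda^{\top}V|\bigr),
\]
where $V = N_{\calO}X$ is the normal bundle (whose fibre at $x_0$ is the $V$ of the statement) and the density factor $|\Lambda^{\top}V|$ records that a transverse $\delta$-function is a normal-directional density. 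Applying Fact~\ref{fct:InvDistributionsOnHomSpaces} to the homogeneous bundle over $\calO=L/S$ then expresses the space of $L$-invariants as $S$-equivariant homomorphisms between fibres at $x_0$. The density bundle $|\Lambda^{\top}V|$ contributes precisely the quotient of modular characters $\chi(s) = |\det\Ad_L(s)|/|\det\Ad_S(s)|$: comparing the intrinsic density line of $X$ to that of $\calO$ at $x_0$ gives back exactly the normal-direction density, and the $S$-action on densities of $L/S$ is by $\chi^{-1}$, whence the density factor provides the twist $\CC_\chi$ on $E$. After dualising $V^*$ back to $V$, the invariants become
\[
 \bigl[\calD'_{\calO,r}(X,E)/\calD'_{\calO,r-1}(X,E)\bigr]^L \;\hookrightarrow\; \Hom_S\bigl(E\otimes\CC_\chi,\, S^r(V_\CC)\bigr),
\]
which vanishes by hypothesis, completing the induction.

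The main obstacle is purely bookkeeping: getting the density-versus-codensity twists correct so that the character coming from $|\Lambda^{\top}V|$ lands on the left as $\CC_\chi$ (rather than $\CC_{\chi^{-1}}$) and so that symmetric powers of $V_\CC$ (rather than $V^*_\CC$) appear on the right after dualising. Nothing in this step is delicate analytically — once the equivariant identification of the graded pieces is in place, the combination of Fact~\ref{fct:InvDistributionsOnHomSpaces} with the descending induction is automatic.
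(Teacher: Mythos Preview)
The paper does not supply its own proof of this theorem: it is stated as a known result of Bruhat and cited to \cite[Vanishing Theorem 3.15]{KV96} and \cite[Section 5.2]{War72}. So there is no ``paper's proof'' to compare against.

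Your sketch is the standard argument behind Bruhat's theorem and is essentially correct: exhaust $\calD'_\calO(X,E)$ by the transversal-order filtration, identify each graded piece with distributional sections of the homogeneous bundle $L\times_S\bigl(E\otimes S^r(V^*_\CC)\otimes|\Lambda^{\top}V|\bigr)$ over $\calO=L/S$, and then invoke Fact~\ref{fct:InvDistributionsOnHomSpaces} (Frobenius reciprocity for invariant distributions on a homogeneous space) to convert $L$-invariants of each graded piece into the $\Hom_S$ spaces of the hypothesis. One small clarification: the sentence ``this filtration is exhaustive'' is not literally true for arbitrary distributions supported on $\calO$ (the transversal order can be unbounded globally), but it \emph{is} true for $L$-invariant ones, since $L$ acts transitively on $\calO$ and hence the order is constant along $\calO$; you implicitly use this when you say ``a local finiteness argument places $u$ in some $\calD'_{\calO,r}$''. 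Your caveat about the density bookkeeping is exactly the place where one has to be careful in writing this out rigorously, but the outcome you state (the twist by $\CC_\chi$ and symmetric powers of $V_\CC$ on the right) matches the statement.
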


Given a stratification $\calO_1,\ldots,\calO_r$ of $L$-orbits with $\calO_r$ open one can now proceed by induction. Putting $X=\calO_{r-k}\cup\ldots\cup\calO_r$, $k=0,\ldots,r$, and assuming that a distribution vanishes on $\calO_{r-k+1}\cup\ldots\cup\calO_r$ we can use Theorem~\ref{thm:KV} to find a condition for the vanishing of the distribution on $\calO_{r-k}$.

\subsection{Bernstein--Sato identities}\label{sec:BernsteinSatoIdentities}

Although Theorem~\ref{thm:ConvergenceMeromorphicContinuationIntertwiners} proves meromorphic continuation of the intertwining operators $A(\alpha,\beta)$, it does not provide any information about the location and nature of the poles. Further, it is not clear in general, how to calculate the residues explicitly. In this section we outline a strategy to obtain explicit Bernstein--Sato identities which can be used to find poles and residues of the intertwining operators. This strategy generalizes a method used by Beckmann--Clerc~\cite{BC12} for the product situation $G=\SO(1,n)\times\SO(1,n)$, $H=\Delta\SO(1,n)$ the diagonal.

We require that the space of $H$-intertwining operators $I^G(\nu)\to I^H(-\nu')'$ has generically dimension equal to the number of open $H$-orbits on $G/P\times H/P_H$. For simplicity of exposition we assume that there is only one open orbit. This property is crucial in what follows.

\subsubsection{Intertwining operators vs.\ invariant bilinear forms}

We first note that continuous $H$-intertwining operators $A:I^G(\nu)\to I^H(-\nu')'$ are in one-to-one correspondence with continuous $H$-invariant bilinear forms $(\blank,\blank)_A:I^G(\nu)\times I^H(-\nu')\to\CC$ in the sense that
$$ (\phi,\psi)_A = \langle A\phi,\psi\rangle, \qquad \phi\in I^G(\nu),\psi\in I^H(-\nu'). $$
Each such invariant bilinear form can by continuity be extended to an $\Delta(H)$-invariant linear functional on
\begin{align*}
 I(\nu,-\nu') :={}& \{f\in C^\infty(G\times H):f(gman,hm_Ha_Hn_H)=a^{-\nu-\rho}a_H^{\nu'-\rho_H}f(g,h)\\
 & \hspace{1.5cm}\forall\,g\in G,h\in H,man\in MAN,m_Ha_Hn_H\in M_HA_HN_H\}
\end{align*}
where we can interpret $I(\nu,-\nu')$ as the completion of the tensor product $I^G(\nu)\otimes I^H(-\nu')$ carrying the left-regular representation of $G\times H$. For $\alpha,\beta\in\fraka_\CC^*$ and $\nu,\nu'$ satisfying \eqref{eq:ParameterRelation} denote by $\ell(\alpha,\beta)$ the $H$-invariant continuous linear functional on $I(\nu,-\nu')$ corresponding to the intertwining operator $A=\tilde{A}(\alpha,\beta)$. We fix $\nu$ and $\nu'$ for the rest of this section. Note that the functional $\ell(\alpha,\beta)$ is given by the kernel $K_{\alpha,\beta}\in\calD'(G\times H)$ viewed as a distribution on $G\times H$ and its meromorphic continuation (see Section~\ref{sec:InvariantDistributions}).

\begin{lemma}
For all $\lambda,\mu\in\Lambda^+(\fraka)$ the multiplication operator
\begin{equation*}
 M_{\lambda,\mu}:C^\infty(G\times H)\to C^\infty(G\times H), \quad M_{\lambda,\mu}f(g,h) = K_{\lambda,\mu}(g,h)f(g,h)
\end{equation*}
defines an $H$-intertwiner
\[ I(\nu,-\nu')\to I(\nu-(-w_0\lambda+\sigma\mu-w_0\mu),-\nu'-\lambda|_{(\fraka^\sigma)_\CC^*}) \]
for all $\nu\in\fraka_\CC^*$, $\nu'\in(\fraka_H)_\CC^*$.
\end{lemma}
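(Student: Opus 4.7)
The plan is to show three things in sequence: (i) that $K_{\lambda,\mu}$ is a globally smooth function on $G\times H$, so that pointwise multiplication genuinely lands in $C^\infty(G\times H)$; (ii) that the resulting function transforms under $MAN\times M_HA_HN_H$ by the required character; and (iii) that multiplication by $K_{\lambda,\mu}$ commutes with the diagonal $H$-action.

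First I would invoke Lemma~\ref{lem:BoundednessAFct}~(1): for $\lambda,\mu\in\Lambda^+(\fraka)$ the functions $a^\lambda$ and $a^\mu$ extend to real-analytic functions on all of $G$. Hence the maps $(g,h)\mapsto a(\tilde w_0^{-1}g^{-1}h)^\lambda$ and $(g,h)\mapsto a(\tilde w_0^{-1}g^{-1}\sigma(g))^\mu$ are real-analytic on $G\times H$, so $K_{\lambda,\mu}\in C^\infty(G\times H)$ and $M_{\lambda,\mu}$ is a continuous multiplication operator on $C^\infty(G\times H)$. This is the only place where the hypothesis $\lambda,\mu\in\Lambda^+(\fraka)$ is essential; for generic parameters the kernel has poles on $G\setminus N\tilde w_0MAN$ and the multiplier would not preserve $C^\infty$.

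Next, given $f\in I(\nu,-\nu')$, I would compute directly using the definition of $I(\nu,-\nu')$ together with Proposition~\ref{prop:KernelEquivariance}~(2) applied to $K_{\lambda,\mu}$:
\begin{align*}
(M_{\lambda,\mu}f)(gman,\,hm_Ha_Hn_H)
 &= K_{\lambda,\mu}(gman,hm_Ha_Hn_H)\,f(gman,hm_Ha_Hn_H)\\
 &= a^{-w_0\lambda+\sigma\mu-w_0\mu}a_H^{\lambda}\,a^{-\nu-\rho}a_H^{\nu'-\rho_H}\,(M_{\lambda,\mu}f)(g,h).
\end{align*}
Collecting exponents on the $A$-side gives $-\nu-\rho+(-w_0\lambda+\sigma\mu-w_0\mu)=-\tilde\nu-\rho$ with $\tilde\nu=\nu-(-w_0\lambda+\sigma\mu-w_0\mu)$, and on the $A_H$-side (where $a_H^{\lambda}$ depends only on $\lambda|_{(\fraka\cap\fraka_H)_\CC}=\lambda|_{(\fraka^\sigma)_\CC}$) it gives $(\nu'+\lambda|_{(\fraka^\sigma)_\CC})-\rho_H$, i.e.\ $-(-\nu'-\lambda|_{(\fraka^\sigma)_\CC})-\rho_H$ in the sign convention used for the second argument of $I(\blank,\blank)$. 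This is exactly the transformation law for membership in $I(\tilde\nu,\,-\nu'-\lambda|_{(\fraka^\sigma)_\CC})$.

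Finally, for the intertwining property with respect to $\Delta(H)$ acting by left translation, I would use Proposition~\ref{prop:KernelEquivariance}~(1): for $h'\in H$,
\[
(M_{\lambda,\mu}(h'\cdot f))(g,h)=K_{\lambda,\mu}(g,h)\,f(h'^{-1}g,h'^{-1}h)=K_{\lambda,\mu}(h'^{-1}g,h'^{-1}h)\,f(h'^{-1}g,h'^{-1}h),
\]
which equals $(h'\cdot(M_{\lambda,\mu}f))(g,h)$. Thus $M_{\lambda,\mu}$ commutes with the left $H$-action, completing the proof. There is no real obstacle here beyond matching signs and restrictions carefully; the only substantive input is the global smoothness provided by $\lambda,\mu\in\Lambda^+(\fraka)$.
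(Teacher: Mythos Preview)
Your proof is correct and follows exactly the same approach as the paper: smoothness from Lemma~\ref{lem:BoundednessAFct}~(1), the right-equivariance from Proposition~\ref{prop:KernelEquivariance}~(2), and $H$-intertwining from Proposition~\ref{prop:KernelEquivariance}~(1). The paper merely cites these three results without writing out the computations, whereas you have made the exponent bookkeeping explicit.
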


\begin{proof}
Since $\lambda,\mu\in\Lambda^+(\fraka)$ the function $K_{\lambda,\mu}(g,h)$ is by Lemma~\ref{lem:BoundednessAFct}~(1) smooth on $G$ and hence $M_{\lambda,\mu}$ maps into $C^\infty(G\times H)$. That it actually maps $I(\nu,-\nu')$ into the right space follows from Proposition~\ref{prop:KernelEquivariance}~(2). Since $K_{\lambda,\mu}(g,h)$ is by Proposition~\ref{prop:KernelEquivariance}~(1) left-invariant under $H$ the operator $M_{\lambda,\mu}$ is also $H$-intertwining and the proof is complete.
\end{proof}

To obtain an intertwining operator $N_{\lambda,\mu}$ ``in the other direction'' we conjugate with the classical Knapp--Stein intertwiners. Let $A^G(\nu):I^G(\nu)\to I^G(w_0\nu)$ denote the (regularized) classical Knapp--Stein intertwining operator for $G$ and $A^H(-\nu'):I^H(-\nu')\to I^H(-w_0\nu')$ the one for $H$. For $\lambda,\mu\in\Lambda^+(\fraka)$ the operator
\[ N_{\lambda,\mu}(\alpha,\beta):I(\nu,-\nu')\to I(\nu+(\lambda+\mu-w_0\sigma\mu),-\nu'-w_0\lambda|_{(\fraka^\sigma)_\CC^*}) \]
given by
\begin{multline*}
 N_{\lambda,\mu}(\alpha,\beta) := (A^G(w_0\nu-(-w_0\lambda+\sigma\mu-w_0\mu))\otimes A^H(-w_0\nu'-\lambda|_{(\fraka^\sigma)_\CC^*}))\\
 \circ M_{\lambda,\mu}\circ(A^G(\nu)\otimes A^H(-\nu')).
\end{multline*}
is $H$-intertwining.

Now, on $I(\nu+(\lambda+\mu-w_0\sigma\mu),-\nu'-w_0\lambda|_{(\fraka^\sigma)_\CC^*})$ the functional $\ell(\alpha+\lambda,\beta+\mu)$ is $H$-invariant and hence the functional
\begin{equation*}
 \ell(\alpha,\beta)^{\lambda,\mu} := \ell(\alpha+\lambda,\beta+\mu)\circ N_{\lambda,\mu}(\alpha,\beta).
\end{equation*}
is $H$-invariant on $I(\nu,-\nu')$. Because of the assumed generic multiplicity one property it has to be generically proportional to $\ell(\alpha,\beta)$, i.e.\ there exists a function $b_{\lambda,\mu}(\alpha,\beta)$ such that generically the following Bernstein--Sato type identity holds:
\begin{equation}
 b_{\lambda,\mu}(\alpha,\beta) \ell(\alpha,\beta) = \ell(\alpha+\lambda,\beta+\mu)\circ N_{\lambda,\mu}(\alpha,\beta).\label{eq:BernsteinSatoIdentity}
\end{equation}
If the function $b_{\lambda,\mu}(\alpha,\beta)$ is meromorphic (e.g.\ polynomial in $\alpha$ and $\beta$) we can use the identity~\eqref{eq:BernsteinSatoIdentity} to extend $\ell(\alpha,\beta)$ meromorphically by dividing by $b_{\lambda,\mu}(\alpha,\beta)$. 

\begin{remark}
We expect that in some cases $N_{\lambda,\mu}(\alpha,\beta)$ is a differential operator on $G\times H$ and $b_{\lambda,\mu}(\alpha,\beta)$ a polynomial in $\alpha$ and $\beta$. Then the transpose of $N_{\lambda,\mu}(\alpha,\beta)$ maps the integral kernel $K_{\alpha+\lambda,\beta+\mu}(g,h)$ to a multiple of the integral kernel $K_{\alpha,\beta}(g,h)$:
\[ N_{\lambda,\mu}(\alpha,\beta)^tK_{\alpha+\lambda,\beta+\mu} = b_{\lambda,\mu}(\alpha,\beta)K_{\alpha,\beta}. \]
Knowing some residue of $K_{\alpha,\beta}(g,h)$ for some parameter $(\alpha,\beta)$ this identity can be used to find the residue at the parameter $(\alpha+\lambda,\beta+\mu)$ (see \cite{BC12} for the case $(\SO(1,n)\times\SO(1,n),\Delta\SO(1,n))$).
\end{remark}
\section{Uniqueness for rank one groups}\label{sec:RankOne}

We use the technique described in Section~\ref{sec:Uniqueness} to prove that for the cases $(G,H)=(\SU(1,n;\FF),\upS(\upU(1,m;\FF)\times\upU(n-m;\FF)))$, $\FF=\RR,\CC,\HH,\OO$, the space of continuous intertwining operators $I^G(\nu)\to I^H(\nu')$ is one-dimensional for generic parameters $\nu,\nu'$. In the special cases $\FF=\RR,\CC$ with $m=n-1$ this also follows from the multiplicity-one theorems by Sun--Zhu~\cite{SZ12}. For $\FF=\RR$ Kobayashi--Speh~\cite{KS,KS14} obtained all intertwining operators, also for singular parameters $\nu,\nu'$, finding multiplicity two in some singular cases.

We use the notation of Section~\ref{sec:ExRankOne}.

\subsection{Geometry of the double flag variety}

Using Proposition~\ref{prop:OrbitsDoubleFlag} it is easy to see that the subgroup $H$ has precisely three orbits on the double flag variety $G/P\times H/P_H$. Put $L=H\times P\times P_H$ and let $L$ act on $G\times H$ by $(h',p,p_H)\cdot(g,h)=(h'gp^{-1},h'hp_H^{-1})$. Then $G\times H=\calO_1\sqcup\calO_2\sqcup\calO_3$, where
\begin{equation*}
 \calO_1 = L\cdot(\1,\1), \qquad \calO_2 = L\cdot(\tilde{w}_0,\1), \qquad \calO_3 = L\cdot(n_0\tilde{w}_0,\1)
\end{equation*}
for some arbitrary $n_0\in N\setminus N_H$ as in Section~\ref{sec:ExRankOne}. The stabilizer subgroups are given by
\begin{align*}
 S_1 &:= \Stab_L(\1,\1) = \{(p_H,p_H,p_H):p_H\in P_H\} \cong P_H,\\
 S_2 &:= \Stab_L(\tilde{w}_0,\1) = \{(g,\tilde{w}_0^{-1}g\tilde{w}_0,g):g\in M_HA_H\} \cong M_HA_H,\\
 S_3 &:= \Stab_L(n_0\tilde{w}_0,\1) = \{(g,\tilde{w}_0^{-1}g\tilde{w}_0,g):g\in\Stab_{M_H}(n_0)\} \cong \Stab_{M_H}(n_0).
\end{align*}
The orbit $\calO_3$ is open and hence it has tangent space
\begin{equation*}
 T_{(n_0\tilde{w}_0,\1)}\calO_3 = \frakg\times\frakh.
\end{equation*}
The action of $L$ on $(\tilde{w}_0,\1)$ is given by $(h,p,p_H)\cdot(\tilde{w}_0,\1)=(h\tilde{w}_0p^{-1},hp_H^{-1})=(\tilde{w}_0(\tilde{w}_0^{-1}h\tilde{w}_0)p^{-1},hp_H^{-1})$ and therefore the tangent space of $\calO_2$ at $(\tilde{w}_0,\1)$ is given by
\begin{align*}
 T_{(\tilde{w}_0,\1)}\calO_2 &= \{(\Ad(\tilde{w}_0^{-1})X+Y,X+Z):X\in\frakh,Y\in\frakp,Z\in\frakp_H\}\\
 &= \{(\Ad(\tilde{w}_0^{-1})X+Y,X):X\in\frakh,Y\in\frakp+\overline{\frakn}_H\} = (\frakp+\overline{\frakn}_H)\times\frakh.
\end{align*}
The action of $L$ on $(\1,\1)$ is given by $(h,p,p_H)\cdot(\1,\1)=(hp^{-1},hp_H^{-1})$ and therefore the tangent space of $\calO_1$ at $(\1,\1)$ is given by
\begin{align*}
 T_{(\1,\1)}\calO_1 &= \{(X+Y,X+Z):X\in\frakh,Y\in\frakp,Z\in\frakp_H\}\\
 &= \{(X+Y,X+Z):X\in\overline{\frakn}_H,Y\in\frakp,Z\in\frakp_H\}.
\end{align*}
Thus we find the quotients (identified with subspaces of $\frakg\oplus\frakh$)
\begin{align*}
 (T_{(\tilde{w}_0,\1)}(G\times H))/(T_{(\tilde{w}_0,\1)}\calO_2) &\cong  \overline{\frakn}^{-\sigma}\times\{0\},\\
 (T_{(\1,\1)}(G\times H))/(T_{(\1,\1)}\calO_1) &\cong \{(X+Y,-X):X\in\overline{\frakn}_H,Y\in\overline{\frakn}^{-\sigma}\}.
\end{align*}
An element $(\tilde{w}_0g\tilde{w}_0^{-1},g,\tilde{w}_0g\tilde{w}_0^{-1})\in S_2$, $g\in M_HA_H$, acts on the quotient space $\overline{\frakn}^{-\sigma}\times\{0\}$ of the tangent spaces at $(\tilde{w}_0,\1)$ as follows:
\begin{align*}
 (\tilde{w}_0g\tilde{w}_0^{-1},g,\tilde{w}_0g\tilde{w}_0^{-1})\cdot(X,0) &= \left.\frac{\td}{\td t}\right|_{t=0} (\tilde{w}_0g\tilde{w}_0^{-1},g,\tilde{w}_0g\tilde{w}_0^{-1})\cdot(\tilde{w}_0,\1)e^{t(X,0)}\\
 &= \left.\frac{\td}{\td t}\right|_{t=0} (\tilde{w}_0e^{t\Ad(g)X},\1)\\
 &= (\Ad(g)X,0).
\end{align*}
An element $(p_H,p_H,p_H)\in S_3$, $p_H\in P_H$, acts on the quotient space $\{(X+Y,-X):X\in\overline{\frakn}_H,Y\in\overline{\frakn}^{-\sigma}\}$ of the tangent spaces at $(\1,\1)$ by
\begin{align*}
 (p_H,p_H,p_H)\cdot(X+Y,-X) &= \left.\frac{\td}{\td t}\right|_{t=0} (p_H,p_H,p_H)\cdot(\1,\1)e^{t(X+Y,-X)}\\
 &= \left.\frac{\td}{\td t}\right|_{t=0} (p_He^{t(X+Y)}p_H^{-1},p_He^{-tX}p_H^{-1})\\
 &= (\pr_{\overline{\frakn}}\Ad(p_H)(X+Y),-\pr_{\overline{\frakn}}\Ad(p_H)X),
\end{align*}
where $\pr_{\overline{\frakn}}:\frakg=\frakp+\overline{\frakn}\to\overline{\frakn}$ denotes the projection.

\subsection{Generic uniqueness}

We now show that generically $\dim\calD'(G\times H,E)^L=1$. For this we investigate the condition~\eqref{eq:VanishingThmCondition} for the orbits $\calO_1$ and $\calO_2$.

\subsubsection{Distributions supported on $\calO_2$}\label{sec:Rk1UniquenessO2}

Let $X=\calO_3\cup\calO_2\subseteq G\times H$ open, $\calO=\calO_2\subseteq X$ closed. An invariant distribution $u\in\calD'(G\times H,E)^L$ which vanishes on $\calO_3$ has support on $\overline{\calO_2}$. Hence the restriction $u|_X\in\calD'_\calO(X,E)^L$. By Theorem~\ref{thm:KV} the restriction $u|_X$ has to be zero if the condition~\eqref{eq:VanishingThmCondition} is fulfilled. We identify $S=S_2\cong M_HA_H$ by $(\tilde{w}_0g\tilde{w}_0^{-1},g,\tilde{w}_0g\tilde{w}_0^{-1})\mapsto g$. Then the character $\beta$ takes the form
\begin{equation*}
 \beta(ma) = a^{\nu-\rho}(a^{-1})^{-\nu'-\rho_H} = a^{(\nu-\rho)+(\nu'+\rho_H)}, \qquad ma\in M_HA_H.
\end{equation*}
The character $\chi$ is given by
\begin{equation*}
 \chi(ma) = \frac{|\det\Ad_L(\tilde{w}_0ma\tilde{w}_0^{-1},ma,\tilde{w}_0ma\tilde{w}_0^{-1})|}{|\det\Ad_{M_HA_H}(ma)|} = a^{2\rho-2\rho_H}.
\end{equation*}
Hence $E\otimes\CC_\chi$ is the representation of $M_HA_H$ given by the character $ma\mapsto a^{(\nu+\rho)+(\nu'-\rho_H)}$.

\begin{enumerate}
\item For $\FF=\RR,\CC,\HH$ we have $M_H=\upS(\Delta\upU(1;\FF)\times\upU(m-1;\FF)\times\upU(n-m;\FF))$. The adjoint action of $M_H$ on $\overline{\frakn}^{-\sigma}\cong\FF^{n-m}$ is given by the defining representation of the factor $\upU(n-m;\FF)$ on $\FF^{n-m}$ and the action of $\upU(1;\FF)$ on $\FF^{n-m}$ by multiplication from the right. Therefore, there can only be a non-trivial $M_H$-invariant in $S^r(\overline{\frakn}^{-\sigma}_\CC)$ if $r$ is even. Further, the adjoint action of $A_H$ on $\overline{\frakn}^{-\sigma}\subseteq\frakg_{-\alpha}$ is $\Ad(e^{tH_0})X=e^{-t}X$ and hence $e^{tH_0}$ acts on $S^r(\overline{\frakn}^{-\sigma}_\CC)$ by $e^{-rt}$. Therefore, Theorem~\ref{thm:KV} implies:
\begin{equation*}
 (\nu+\rho)+(\nu'-\rho_H)\notin(-2\NN_0) \qquad \Rightarrow \qquad \calD'_{\calO_2}(\calO_2\cup\calO_3,E)^L=0.
\end{equation*}
\item For $\FF=\OO$ we have $M_H=\Spin(7)$ acting on $\overline{\frakn}^{-\sigma}\cong\OO\cong\RR^8$ by the spin representation. Since $\SU(4)\cong\Spin(6)\subseteq\Spin(7)$ acts on $\RR^8\cong\CC^4$ by the defining representation, there can again only be a non-trivial $M_H$-invariant in $S^r(\overline{\frakn}^{-\sigma}_\CC)$ if $r$ is even. The adjoint action of $A_H$ on $\overline{\frakn}^{-\sigma}=\frakg_{-\alpha}$ is $\Ad(e^{tH_0})X=e^{-t}X$ and hence $e^{tH_0}$ acts on $S^r(\overline{\frakn}^{-\sigma})$ by $e^{-rt}$. Therefore, Theorem~\ref{thm:KV} implies:
\begin{equation*}
 (\nu+\rho)+(\nu'-\rho_H)\notin(-2\NN_0) \qquad \Rightarrow \qquad \calD'_{\calO_2}(\calO_2\cup\calO_3,E)^L=0.
\end{equation*}
\end{enumerate}

\subsubsection{Distributions supported on $\calO_1$}

Let $X=G\times H$, $\calO=\calO_1\subseteq X$ closed. An invariant distribution $u\in\calD'(G\times H,E)^L$ which vanishes on the open subset $\calO_2\cup\calO_3\subseteq X$ has support on $\calO_1$, hence $u\in\calD'_\calO(X,E)^L$. Again we check condition~\ref{eq:VanishingThmCondition}. We identify $S=S_1\cong P_H$ by $(p_H,p_H,p_H)\mapsto p_H$. The character $\beta$ takes the form
\begin{equation*}
 \beta(man) = a^{\nu-\rho}a^{-\nu'-\rho_H} = a^{(\nu-\rho)-(\nu'+\rho_H)}, \qquad man\in M_HA_HN_H.
\end{equation*}
The character $\chi$ is given by
\begin{equation*}
 \chi(man) = \frac{|\det\Ad_L(man,man,man)|}{|\det\Ad_{M_HA_HN_H}(man)|} = \frac{a^{2\rho}a^{2\rho_H}}{a^{2\rho_H}} = a^{2\rho}.
\end{equation*}
Hence $E\otimes\CC_\chi$ is the representation of $P_H$ given by the character $man\mapsto a^{(\nu+\rho)-(\nu'+\rho_H)}$. We identify the quotient space $V=(T_{(\1,\1)}X)/(T_{(\1,\1)}\calO)$ with $\overline{\frakn}$ by $(X+Y,-X)\mapsto X+Y$. The action of $S_1\cong P_H$ on $\overline{\frakn}$ is given by $p_H\cdot X=\pr_{\overline{\frakn}}\Ad(p_H)X$. It is easy to see that $N_H$ acts trivially and $M_HA_H$ act by the adjoint representation.

\begin{enumerate}
\item For $\FF=\RR,\CC,\HH$ the adjoint action of $M_H=\upS(\Delta\upU(1;\FF)\times\upU(m-1;\FF)\times\upU(n-m;\FF))$ on
$$ V\cong\overline{\frakn}\cong\FF^{n-1}\oplus\Im\FF=\FF^{m-1}\oplus\FF^{n-m}\oplus\Im\FF $$
is given by the defining representations of $\upU(m-1;\FF)$ and $\upU(n-m;\FF)$ on $\FF^{m-1}$ and $\FF^{n-m}$, respectively, and the action of $\Delta\upU(1,\FF)\cong\upU(1;\FF)$ on each factor (see Section~\ref{sec:ExRankOne} for details). Writing
$$ S^r(\overline{\frakn}) = \bigoplus_{j+k+\ell=r}S^j(\FF^{m-1})\otimes S^k(\FF^{n-m})\otimes S^\ell(\Im\FF) $$
we see that there can only be a non-trivial $M_H$-invariant in the summands for $j$ and $k$ even. On each summand $e^{tH_0}$ acts by $e^{-(j+k+2\ell)t}$ and since $j+k+2\ell$ is even we obtain by Theorem~\ref{thm:KV} that
\begin{equation*}
 (\nu+\rho)-(\nu'+\rho_H)\notin(-2\NN_0) \qquad \Rightarrow \qquad \calD'_{\calO_1}(G\times H,E)^L=0.
\end{equation*}
\item For $\FF=\OO$ we have $M_H=\Spin(7)$ acting on $\overline{\frakn}\cong\OO\oplus\Im\OO$ by the direct sum of the spin representation on $\OO\cong\RR^8$ and the natural representation of $\Spin(7)$ on $\Im\OO\cong\RR^7$.  Write
$$ S^r(\overline{\frakn}) = \bigoplus_{j+k=r}S^j(\RR^8)\otimes S^k(\RR^7). $$
As in Section~\ref{sec:Rk1UniquenessO2} we conclude that $j$ must be even if $S^j(\RR^8)$ should contain a non-trivial $\Spin(7)$-invariant. On such a summand $S^j(\RR^8)\otimes S^k(\RR^7)$ the adjoint action of $e^{tH_0}$ is given by $e^{-(j+2k)t}$ with $j+2k$ even. Hence again
\begin{equation*}
 (\nu+\rho)-(\nu'+\rho_H)\notin(-2\NN_0) \qquad \Rightarrow \qquad \calD'_{\calO_1}(G\times H,E)^L=0.
\end{equation*}
\end{enumerate}

\subsubsection{Uniqueness statement}

Altogether we can now prove the following result:

\begin{theorem}\label{thm:UniquenessRank1}
Let $(G,H)=(\SU(1,n;\FF),\upS(\upU(1,m;\FF)\times\upU(n-m;\FF)))$ with $\FF=\RR,\CC,\HH$ and $0<m<n$ or $\FF=\OO$ and $n=2$, $m=1$. For $\nu+\rho-\rho_H\pm\nu'\notin(-2\NN_0)$ the space of continuous $H$-intertwining operators $I^G(\nu)\to I^H(-\nu')'$ is at most one-dimensional. In particular, we have generically
$$ \Hom_H(\pi_\nu|_H,\tau_{\nu'}) = \CC\cdot A(\alpha,\beta) $$
with $\alpha=-(\nu'+\rho_H)$ and $\beta=\frac{(\nu-\rho)+(\nu'+\rho_H)}{2}$.
\end{theorem}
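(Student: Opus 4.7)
The strategy is to apply the uniqueness framework of Section~\ref{sec:Uniqueness}. By Lemma~\ref{lem:MapFormsToDistributions} it suffices to bound $\dim\calD'(G\times H,E)^L\leq 1$ for the given generic parameters, where $L=H\times P\times P_H$ acts on $G\times H$ and $E=\CC_\beta$ is the one-dimensional $L$-module determined by the character $\beta(h,man,m_Ha_Hn_H)=a^{\nu-\rho}a_H^{-\nu'-\rho_H}$. By Proposition~\ref{prop:OrbitsDoubleFlag} combined with the rank-one analysis of Section~\ref{sec:ExRankOne}, the space $G\times H$ decomposes into three $L$-orbits $\calO_1\subsetneq\overline{\calO_2}\subsetneq G\times H$ with $\calO_3$ open, with stabilizers $S_1\cong P_H$, $S_2\cong M_HA_H$ and $S_3\cong\Stab_{M_H}(n_0)$, as laid out in the preceding subsection.

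Given $u\in\calD'(G\times H,E)^L$ I would peel off the strata one by one. First, the restriction $u|_{\calO_3}$ lies in a space of dimension at most $\dim E^{S_3}\leq 1$ by Fact~\ref{fct:InvDistributionsOnHomSpaces}. If this restriction vanishes, then $u$ is supported on $\calO_1\cup\calO_2$, and I would pass to its further restriction to the open subset $\calO_2\cup\calO_3$, an element of $\calD'_{\calO_2}(\calO_2\cup\calO_3,E)^L$. The application of Bruhat's Vanishing Theorem~\ref{thm:KV} carried out in Section~\ref{sec:Rk1UniquenessO2} (with normal quotient $V=\overline{\frakn}^{-\sigma}$ and stabilizer $S_2$ acting through its natural $M_HA_H$-structure) shows this space vanishes under $(\nu+\rho)+(\nu'-\rho_H)\notin-2\NN_0$. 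Hence $u$ is supported on $\calO_1$ itself, lying in $\calD'_{\calO_1}(G\times H,E)^L$, and the parallel computation for $\calO_1$ performed above shows this vanishes under $(\nu+\rho)-(\nu'+\rho_H)\notin-2\NN_0$. The conjunction of the two conditions is precisely $\nu+\rho-\rho_H\pm\nu'\notin-2\NN_0$, giving $\dim\Hom_H(\pi_\nu|_H,\tau_{\nu'})\leq 1$.

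To identify the spanning operator, I would invert the parameter relation~\eqref{eq:ParameterRelation}. In the rank-one setting $\fraka=\fraka_H=\RR H_0$ is one-dimensional, so $w_0=-\id$ and $\sigma=\id$ on $\fraka$; the relation collapses to $\nu=\alpha+2\beta+\rho$, $\nu'=-\alpha-\rho_H$, which is uniquely inverted to $\alpha=-(\nu'+\rho_H)$ and $\beta=\tfrac{1}{2}((\nu-\rho)+(\nu'+\rho_H))$. Generic non-vanishing of $A(\alpha,\beta)$ is guaranteed by the non-triviality assertion in Theorem~\ref{thm:ConvergenceMeromorphicContinuationIntertwiners}~(2), so $A(\alpha,\beta)$ spans the at-most-one-dimensional space of intertwiners for generic $(\nu,\nu')$. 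The essential obstacle has already been overcome in the preceding subsections: the parity observation that the stabilizer actions on the symmetric powers $S^r(V_\CC)$ produce only characters $e^{tH_0}\mapsto e^{-rt}$ with $r$ even, which is what forces the exceptional set to lie in $-2\NN_0$ rather than $-\NN_0$. This rests on the unitary factor $\Delta\upU(1;\FF)\subseteq M_H$ acting complex-linearly on $\FF^{n-m}$, together with the analogous complex-linearity of $\SU(4)\subseteq\Spin(7)$ acting on $\OO\cong\CC^4$ in the octonionic case.
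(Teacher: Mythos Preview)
Your proposal is correct and follows essentially the same argument as the paper: reduce to $L$-invariant distributions via Lemma~\ref{lem:MapFormsToDistributions}, use Fact~\ref{fct:InvDistributionsOnHomSpaces} on the open orbit $\calO_3$, then apply Bruhat's Vanishing Theorem~\ref{thm:KV} on $\calO_2$ and $\calO_1$ with the parity analysis already carried out, and finally invoke Theorem~\ref{thm:ConvergenceMeromorphicContinuationIntertwiners} for non-triviality of $A(\alpha,\beta)$. The only cosmetic difference is that the paper phrases the first step by taking two intertwiners and showing their difference vanishes, whereas you argue directly that the restriction map to $\calO_3$ has at-most-one-dimensional image and trivial kernel under the stated conditions; the content is the same.
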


\begin{proof}
Let $A$ and $A'$ be two $H$-intertwining operators $I^G(\nu)|_H\to I^H(-\nu')'$. To these operators we associate $L$-invariant distributions $K_A,K_{A'}\in\calD'(G\times H,E)^L$ as in Lemma~\ref{lem:MapFormsToDistributions}. On the open orbit $\calO_3$ these distributions are by Fact~\ref{fct:InvDistributionsOnHomSpaces} scalar multiples of each other, say $K_A|_{\calO_3}=\lambda K_{A'}|_{\calO_3}$. Then the difference $u:=K_A-\lambda K_{A'}$ vanishes on $\calO_3$. Therefore its restriction to $\calO_2\cup\calO_3$ is contained in $\calD'_{\calO_2}(\calO_2\cup\calO_3,E)^L$ which is trivial by our previous calculations. This means that $u$ vanishes on $\calO_2\cup\calO_3$, so it has support on $\calO_1$. But also $\calD'_{\calO_1}(G\times H,E)^L$ is trivial by the previous considerations and hence $u=0$ which implies $K_A=\lambda K_{A'}$. Therefore $\Hom_H(\pi_\nu|_H,\tau_{\nu'})$ is at most one-dimensional. That it is generically spanned by $A(\alpha,\beta)$ follows from the fact that $A(\alpha,\beta)\in\Hom_H(\pi_\nu|_H,\tau_{\nu'})$ is non-trivial by Theorem~\ref{thm:ConvergenceMeromorphicContinuationIntertwiners}.
\end{proof}

\begin{remark}
Sun--Zhu~\cite{SZ12} recently showed that $(G,H)=(\upO(p,q),\upO(p,q-1))$ and $(G,H)=(\upU(p,q),\upU(p,q-1))$ are strong Gelfand pairs, i.e.\ $\Hom_H(\pi|_H,\tau)$ is at most one-dimensional for all irreducible admissible smooth Fr\'{e}chet representations $\pi$ and $\tau$ which are of moderate growth and $Z(\frakg)$-finite. This implies Theorem~\ref{thm:UniquenessRank1} for the cases $\FF=\RR,\CC$ and $m=n-1$ when $I^G(\nu)$ and $I^H(\nu')$ are irreducible, i.e.\ $\nu\notin\pm(\rho+\NN_0)$ and $\nu'\notin\pm(\rho_H+\NN_0)$. The results of \cite{SZ12} together with Theorem~\ref{thm:UniquenessRank1} imply that the space of intertwining operators $I^G(\nu)\to I^H(\nu')$ can only be of dimension $>1$ if both $\nu\in\pm(\rho+\NN_0)$ and $\nu'\in\pm(\rho_H+\NN_0)$. For $(G,H)=(\upO(1,n),\upO(1,n-1))$ Kobayashi--Speh~\cite{KS,KS14} found all intertwining operators $I^G(\nu)\to I^H(\nu')$, also for singular parameters. For some parameters the space of intertwining operators is two-dimensional.
\end{remark}
\section{Uniqueness for $(G,H)=(\GL(4n,\RR),\GL(2n,\CC))$}

Let $G=\GL(4n,\RR)$ with parabolic subgroup $P=(\GL(2n,\RR)\times\GL(2n,\RR))\ltimes M(2n,\RR)$ and $H=\GL(2n,\CC)\subseteq G$ with parabolic subgroup $P_H=(\GL(n,\CC)\times\GL(n,\CC))\ltimes M(n,\CC)$. Denote by $I^G(\nu)$ and $I^H(\nu')$ the principal series representations of $G$ and $H$ induced from characters $\nu$ of $P$ and $\nu'$ of $P_H$ as in Section~\ref{sec:InducedRepresentations}. In this section we show, using the techniques described in Section~\ref{sec:Uniqueness}, that the space of continuous intertwining operators $I^G(\nu)\to I^H(\nu')$ is generically of dimension $\leq1$.

Realize $H$ as the fixed points in $G$ of the involution
$$ \sigma(g) = JgJ^{-1}, $$
where
$$ J = \left(\begin{array}{cc}J_n&\\&J_n\end{array}\right) \quad \text{and} \quad J_n = \left(\begin{array}{cc}\0_n&\1_n\\-\1_n&\0_n\end{array}\right). $$
Let
$$ P = \left\{\left(\begin{array}{cc}A&B\\ \0&D\end{array}\right):A,D\in\GL(2n,\RR),B\in M(2n,\RR)\right\}. $$
Then $P$ is a $\sigma$-stable parabolic subgroup of $G$ and $P_H=(\GL(n,\CC)\times\GL(n,\CC))\ltimes M(n,\CC)$. We have $M_HA_H\simeq\GL(n,\CC)\times\GL(n,\CC)$ and identifying $\frakn\simeq M(2n,\RR)$ we have
\begin{align}
\begin{split}
 \frakn^{-\sigma} = \left\{\left(\begin{array}{cc}A&B\\B&-A\end{array}\right):A,B\in M(n,\RR)\right\} &\stackrel{\sim}{\to} M(n,\CC)\\
 \left(\begin{array}{cc}A&B\\B&-A\end{array}\right) &\mapsto A+iB.
\end{split}\label{eq:Ex2IdentificationNMinusSigma}
\end{align}
Via this isomorphism the action of $M_HA_H$ on $\frakn^{-\sigma}$ is given by
\begin{equation}
 (g_1,g_2)\cdot X = g_1Xg_2^{-1}, \qquad g_1,g_2\in\GL(n,\CC),X\in M(n,\CC).\label{eq:Ex2ActionMHAHonNMinusSigma}
\end{equation}
Hence $\frakn^{-\sigma}$ decomposes into $M_HA_H$-orbits as follows:
$$ \frakn^{-\sigma} \simeq M(n,\CC)=\bigsqcup_{k=0}^n M(n,\CC)_k, $$
where $M(n,\CC)_k=M_HA_H\cdot e_k$ denotes the subset of matrices of complex rank $k$, a representative being the diagonal matrix $e_k$ with $k$ times $1$ on the diagonal followed by $n-k$ times $0$. In particular $M(n,\CC)_n$ is the unique open orbit and hence $H$ has an open dense orbit on $G/P\times H/P_H$.

\subsection{$P$-orbits in $G/P$}

The Weyl group of $G$ is given by the symmetric group $W=S_{4n}$ on $4n$ elements. Then $W_P=S_{2n}\times S_{2n}$ and hence
$$ W_P\backslash W/W_P = \{W_Pw_kW_P:0\leq k\leq2n\}, \quad w_k=(1,2n+1)\cdots(k,2n+k). $$
Representatives $\tilde{w}_k$ for $w_k$ are given by
$$ \tilde{w}_k = \left(\begin{array}{cc|cc}\0_k&&\1_k&\\&\1_{2n-k}&&\0_{2n-k}\\\hline\1_k&&\0_k&\\&\0_{2n-k}&&\1_{2n-k}\end{array}\right), $$
and hence, by the Bruhat decomposition
$$ G = \bigsqcup_{k=0}^{2n} P\tilde{w}_kP. $$
Identifying $G/P$ with the Grassmannian $\Gr(2n,\RR^{4n})$ of $2n$-dimensional subspaces of $\RR^{4n}$ the $P$-orbits $P\tilde{w}_kP$ are given by
\begin{align*}
 & P\tilde{w}_kP = \left\{L\subseteq\RR^{4n}:\dim L =2n, \dim\left[L\cap\left(\begin{array}{c}\RR^{2n}\\ \0\end{array}\right)\right]=2n-k\right\}\\
 ={}& \left\{\underbrace{\left\{\left(\begin{array}{c}x+by\\y\end{array}\right):x\in L',y\in L''\right\}}_{[L',L'',b]:=}:L'\in\Gr(2n-k,\RR^{2n}),L''\in\Gr(k,\RR^{2n}),b\in M(2n,\RR)\right\}.
\end{align*}
Note that
\begin{equation}
 [L',L'',b]=[\tilde{L}',\tilde{L}'',\tilde{b}] \quad \Leftrightarrow \quad \text{$L'=\tilde{L}'$, $L''=\tilde{L}''$ and $(b-\tilde{b})L''\subseteq L'$.}\label{eq:Ex2EquivalenceRelationTriples}
\end{equation}
The group $P=(\GL(2n,\RR)\times\GL(2n,\RR))\ltimes M(2n,\RR)$ acts on $[L',L'',b]$ by
\begin{align*}
 (g_1,g_2)\cdot[L',L'',b] &= [g_1L',g_2L'',g_1bg_2^{-1}], && g_1,g_2\in\GL(2n,\RR),\\
 X\cdot[L',L'',b] &= [L',L'',b+X], && X\in M(2n,\RR).
\end{align*}

\subsection{$P_H$-orbits in $G/P$}\label{sec:NewExampleOrbit}

To find the $P_H$-orbit decomposition of each $P$-orbit $P\tilde{w}_kP$ we first study the action of the Levi part of $P_H$ on $\Gr(2n-k,\RR^{2n})$ and $\Gr(k,\RR^{2n})$. The $\GL(n,\CC)$-orbit decomposition of $\Gr(k,\RR^{2n})$ is given by
$$ \Gr(k,\RR^{2n}) = \bigsqcup_{0,k-n\leq\ell\leq k/2} \calO_{k,\ell}, $$
where $\calO_{k,\ell}=\{L\in\Gr(k,\RR^{2n}):\dim_\CC (L\cap J_nL)=\ell\}$. A representative in $\calO_{k,\ell}$ is given by the subspace $L_{k,\ell}$ of $\RR^{2n}$ spanned by
$e_1, \dots, e_{k-\ell}, e_{n+1}, \dots, e_{n+\ell}$. The stabilizer of $L_{k,\ell}$ in $\GL(n,\CC)$ is 
\begin{multline*}
 S_{k,\ell} \simeq (\GL(\ell,\CC)\times\GL(k-2\ell,\RR)\times\GL(n-k+\ell,\CC))\\
 \qquad\qquad\ltimes(M(\ell\times(k-2\ell),\CC)\times M((k-2\ell)\times(n-k+\ell),\CC)\times M(\ell\times(n-k+\ell),\CC)).
\end{multline*}
This is a subgroup of a parabolic subgroup of $\GL(n,\CC)$ with Levi component $\GL(\ell,\CC)\times\GL(k-2\ell,\CC)\times\GL(n-k+\ell,\CC)$. For any $[L',L'',b]\in P\tilde{w}_kP$ there exists $(g_1,g_2)\in\GL(n,\CC)\times\GL(n,\CC)=M_HA_H$ such that
$$ (g_1,g_2)\cdot[L',L'',b] = [L_{2n-k,\ell_1},L_{k,\ell_2},b'] $$
for some integers $\ell_1$, $\ell_2$, and $b'\in M(2n,\RR)$ satisfying
\begin{align*}
0,n-k\leq \ell_1\leq n-\frac{k}{2}, \qquad 
0,k-n\leq \ell_2\leq \frac{k}{2}.
\end{align*}
Next, there exists $X\in M(n,\CC)=N_H$ such that
$$ X\cdot[L_{2n-k,\ell_1},L_{k,\ell_2},b'] = [L_{2n-k,\ell_1},L_{k,\ell_2},b''], \qquad \text{where } b'' = \left(\begin{array}{cc}A&B\\B&-A\end{array}\right). $$
Identify $M(2n,\RR)$ with $\frakn$ and $M(n,\CC)$ with $\frakn^{-\sigma}$ as in \eqref{eq:Ex2IdentificationNMinusSigma}.
Then $b''\in\frakn^{-\sigma}$ corresponds to $A+iB\in M(n,\CC)$ on which the action of $M_HA_H=\GL(n,\CC)\times\GL(n,\CC)$ is given by left and right multiplication (see \eqref{eq:Ex2ActionMHAHonNMinusSigma}). Let $\pi$ denote the natural projection $\frakn\to \frakn^{-\sigma}$ along $\frakn_H$.  We further divide $A+iB$ into $3\times 3$ blocks:
\begin{align}\label{eq:ExpressionA+iB}
A+iB=\left(\begin{array}{ccc}C_{11}&C_{12}&C_{13}\\C_{21}&C_{22}&C_{23}\\C_{31}&C_{32}&C_{33}\end{array}\right),
\end{align}
where $C_{11}$, $C_{22}$, and $C_{33}$ are of size $\ell_1\times\ell_2$, $(2n-k-2\ell_1) \times (k-2\ell_2)$ and $(k-n+\ell_1)\times (n-k+\ell_2)$, respectively.
Put $V=\{b\in M(2n,\RR):bL_{k,\ell_2}\subset L_{2n-k,\ell_1}\}$.
Then it follows that
\begin{align*}
\pi(V)=\left\{\left(\begin{array}{ccc}C_{11}&C_{12}&C_{13}\\C_{21}&C_{22}&C_{23}\\
\0&C_{32}&C_{33}\end{array}\right):C_{ij}\text{ arbitrary }\forall i,j\right\}.
\end{align*}
By taking into account the equivalence condition \eqref{eq:Ex2EquivalenceRelationTriples} and the action of $N_H$ this implies that we may assume $C_{ij}=0$ for $(i,j)\neq (3,1)$ in the expression \eqref{eq:ExpressionA+iB}.
Supppose that the rank of $C_{31}$ is $m$.
Then by the multiplication by the
 $\GL(k-n+\ell_1,\CC)\times \GL(\ell_2,\CC)$-component
 of $S_{2n-k,\ell_1}\times S_{k,\ell_2}$, we may take $C_{31}$ to be
\begin{align*}
\left(\begin{array}{cc}
 \1_m & \0 \\
 \0 & \0_{k-n+\ell_1-m,\, \ell_2-m}\\
\end{array}\right).
\end{align*}
Write $b_m$ for the matrix $b''$ given in this way.
It is easy to see that $[L_{2n-k,\ell_1},L_{k,\ell_2},b_m]$ for different $m$
 do not lie in the same $P_H$-orbit.
As a consequence, the $P_H$-orbits in $G/P$ are parameterized by
 four non-negative integers $k,\ell_1,\ell_2,m$ satisfying
\begin{align*}
&k\leq 2n, && 
n-k\leq \ell_1\leq n-\frac{k}{2}, \\ 
&k-n\leq \ell_2\leq \frac{k}{2}, &&
m\leq k-n+\ell_1, \ell_2.
\end{align*}
The representative $[L_{2n-k,\ell_1},L_{k,\ell_2},b_m]\in G/P$
 is the $2n$-dimensional subspace of $\RR^{4n}$ spanned by
\begin{align*}
&e_1,\dots,e_{2n-k-\ell_1}, e_{n+1},\dots,e_{n+\ell_1},
e_{2n-k-\ell_1+1}+e_{2n+1},\dots,e_{2n-k-\ell_1+m}+e_{2n+m},\\
& e_{2n+m+1},\dots,e_{2n+k-\ell_2},
 e_{3n-k-\ell_1+1}-e_{3n+1},\dots,e_{3n-k-\ell_1+m}-e_{3n+m},\\
& e_{3n+m+1},\dots,e_{3n+\ell_2}.
\end{align*}
The unique open orbit corresponds to $(k,\ell_1,\ell_2,m)=(2n,0,n,n)$.

\subsection{Generic uniqueness}

For every $\nu\in\fraka_\CC^*$ the character $P\ni p\mapsto a(p)^\nu$ is given by
\begin{align*}
 \left(\begin{array}{cc}A&X \\ \0&B\end{array}\right)
\mapsto |\det A|^{\nu_1}|\det B|^{\nu_2}, \qquad A,B\in\GL(2n,\RR),X\in M(2n,\RR)
\end{align*}
for some $\nu_1,\nu_2\in \CC$. 
Similarly, for $\nu'\in(\fraka_H)_\CC^*$ the character $P_H\ni p_H\mapsto a(p_H)^{\nu'}$ is given by
\begin{align*}
 \left(\begin{array}{cc}C&Y \\ \0&D\end{array}\right)
\mapsto \textstyle|\det_\CC C|^{\nu'_1}|\det_\CC D|^{\nu'_2}, \qquad C,D\in\GL(n,\CC),Y\in M(n,\CC)
\end{align*}
for some $\nu'_1,\nu'_2\in \CC$.
Here $\det_\CC$ denotes the determinant of complex matrices.
The characters $\rho$ and $\rho_H$ are given by 
\begin{align*}
\left(\begin{array}{cc}A&X \\ \0&B\end{array}\right)
\mapsto |\det A|^{n}|\det B|^{-n},\quad
\left(\begin{array}{cc}C&Y \\ \0&D\end{array}\right)
\mapsto \textstyle|\det_\CC C|^{n}|\det_\CC D|^{-n}.
\end{align*}
For the existence of an $H$-intertwining operator 
$I^G(\nu)\to I^H(-\nu')'$, the subgroup
$$ Z = \{ t\cdot\1_{4n}: t\in\RR^\times \} = Z(G) \subseteq Z(H) $$
has to act by the same scalars on $I^G(\nu)$ and $I^H(\nu')$.
Hence, intertwiners $I^G(\nu)\to I^H(-\nu')'$ can only exist if
$$ 2(\nu_1+\nu_2) = \nu_1'+\nu_2'. $$
Also tensoring the same character of $G$ with $I^G(\nu)$ and with $I^H(\nu')$,
 we may and do assume that $\nu_1+\nu_2=\nu'_1+\nu'_2=0$.
To prove the uniqueness (up to constant multiples) of
 $H$-intertwining operators $I^G(\nu)\to I^H(-\nu')'$
 for generic $\nu_1$ and $\nu'_1$, we apply Theorem~\ref{thm:KV}
 to non-open $H\times P\times P_H$-orbits in $G\times H$.
As in Section~\ref{sec:RankOne}, put $L=H\times P\times P_H$ and $X=G\times H$.
We obtained in Section~\ref{sec:NewExampleOrbit} an $L$-orbit decomposition:
 $X=\bigsqcup_{(k,\ell_1,\ell_2,m)}\calO_{k,\ell_1,\ell_2,m}$.  
Take a non-open orbit $\calO=\calO_{k,\ell_1,\ell_2,m}$ and 
 take an element $g\in G$ such that 
 $gP=[L_{2n-k,\ell_1},L_{k,\ell_2},b_m]\in G/P$.
Let $S$ be the stabilizer of $(g,\1)\in X$ in $L$.
Then 
\begin{align*}
S=\{(p_H, g^{-1} p_H g, p_H)
 : p_H\in P_H\cap g P g^{-1}\}\simeq P_H\cap g P g^{-1}.
\end{align*}
One can see that the elements $p_H\in P_H$ of the following form
 belong to $g P g^{-1}$:
\begin{align*}
p_H=\left(\begin{array}{cc}A&\0 \\ \0&\1_n\end{array}\right),\qquad
 A=\left(\begin{array}{cccc}A_1&&& \\ &A_2&& \\ 
 &&\1_{m} & \\ &&& A_3 \end{array}\right),
\end{align*}
where $A_1\in \GL(\ell_1,\CC)$, $A_2\in \GL(2n-k-2\ell_1,\RR)$, 
 and $A_3\in \GL(k-n+\ell_1-m,\CC)$.
We note that at least one of the integers
 $\ell_1$, $2n-k-2\ell_1$ and $k-n+\ell_1-m$ is positive
 because otherwise $(k,\ell_1,\ell_2,m)=(2n,0,n,n)$, implying $\calO$ is open.
Let $S'\simeq \GL(\ell_1,\CC)\times \GL(2n-k-2\ell_1,\RR)\times \GL(k-n+\ell_1-m,\CC)$
 be the subgroup of $P_H$ consisting of matrices $p_H$ as above.
Then for $p_H\in S'$ it follows that 
\begin{align*}
&a(g^{-1}p_Hg)^{\nu-\rho}
 =\textstyle|\det_\CC A_1|^{2(\nu_1-n)}|\det_\CC A_3|^{-2(\nu_1-n)},\\
&a(p_H)^{-\nu'-\rho_H}
 =\textstyle|\det_\CC A_1\det_\CC A_2\det_\CC A_3|^{-(\nu'_1+n)}.
\end{align*}
Hence the character $\beta$ is given by 
\begin{align*}
\beta(p_H,g^{-1}p_Hg,p_H)=\textstyle|\det_\CC A_1|^{2\nu_1-\nu'_1-3n}|\det_\CC A_2|^{-\nu'_1-n}
|\det_\CC A_3|^{-2\nu_1-\nu'_1+n}
\end{align*}
for $p_H\in S'$.
Let $V:=T_{(g,\1)}X/T_{(g,\1)}\calO$ and
 $\chi(g):=\frac{|\det \Ad_L(g)|}{|\det \Ad_S(g)|}$
 as in Theorem~\ref{thm:KV}.
Here we do not calculate the explicit form of $S$, $V$ or $\chi$. 
However, we can easily deduce an integral condition on the parameters by Theorem~\ref{thm:KV} as follows.
Since $V$ is a real rational representation of $S'$, 
 any one-dimensional $S'$-subrepresentation of $S^r(V_\CC)$
 is of the form:
\begin{align*}
p_H\mapsto
 \textstyle|\det_\CC A_1|^{d_1}|\det_\CC A_2|^{d_2}|\det_\CC A_3|^{d_3}
\end{align*}
for some $d_1, d_2, d_3 \in \ZZ$.
In order to get a sharper condition
 we put $S'':=\{p_H\in S': A_2=\1_{2n-k-2\ell_1}\}$
 and show that $V$ is a complex rational representation
 of $S''$ up to trivial factors, namely,
 there is a decomposition $V=V_1\oplus V_2$ into $S''$-representations
 on real vector spaces such that 
\begin{itemize}
 \item
 $V_1$ has a complex structure for which the representation
 of $S''$ on $V_1$ is complex rational,
 \item
 $S''$ acts trivially on $V_2$. 
\end{itemize}
Indeed, we can endow $T_{(g,\1)}X\simeq \frakg\oplus\frakh$ with
 a complex structure in such a way that $T_{(g,\1)}X$ becomes
 a complex rational representation of $S''$ (or $G'$).
Since any non-trivial irreducible complex rational representation 
 remains irreducible as a real representation
 and $V$ is a quotient $S''$-representation of $T_{(g,\1)}X$,
 there exist a decomposition $V=V_1\oplus V_2$
 and a complex structure on $V_1$ as required.
As a result, the integers $d_1$ and $d_3$ have to be even.
Similarly, since $\chi$ is the top exterior product of $T_{(g,\1)}\calO$, 
 it takes the form:
\begin{align*}
p_H\mapsto 
\textstyle|\det_\CC A_1|^{e_1}|\det_\CC A_2|^{e_2}|\det_\CC A_3|^{e_3}
\end{align*}
for some $e_1, e_3 \in 2\ZZ$ and $e_2\in \ZZ$.
As a result, it is only possible that $\Hom_{S'}(E \otimes \CC_\chi, S^r(V_\CC))\neq 0$ if
 $2\nu_1-\nu'_1 \in (n+2\ZZ)$, $-\nu'_1\in \ZZ$,
 or $-2\nu_1-\nu'_1 \in (n+2\ZZ)$.
By Theorem~\ref{thm:KV} we can conclude generic uniqueness of
 the intertwining operators:

\begin{theorem}\label{thm:UniquenessNewExample}
Let $(G,H)=(\GL(4n;\RR),\GL(2n,\CC))$.
Let $P=(\GL(2n,\RR)\times\GL(2n,\RR))\ltimes M(2n,\RR)$
 and $P_H=(\GL(n,\CC)\times\GL(n,\CC))\ltimes M(n,\CC)$.
Suppose that $P\ni man\mapsto a^\nu$ and
 $P_H\ni m_Ha_Hn_H\mapsto (a_H)^{\nu'}$ are given by
\begin{align*}
 \left(\begin{array}{cc}A&X \\ \0&B\end{array}\right)
\mapsto |\det A|^{\nu_1}|\det B|^{-\nu_1},\quad
 \left(\begin{array}{cc}C&Y \\ \0&D\end{array}\right)
\mapsto \textstyle|\det_\CC C|^{\nu'_1}|\det_\CC D|^{-\nu'_1}
\end{align*}
respectively for some $\nu_1,\nu'_1\in\CC$. 
If $2\nu_1+\nu'_1, 2\nu_1-\nu'_1\notin(n+2\ZZ)$ and $\nu'_1\notin \ZZ$, then
 the space of continuous $H$-intertwining operators $I^G(\nu)\to I^H(-\nu')'$ is at most one-dimensional. In particular, we have generically
$$ \Hom_H(\pi_\nu|_H,\tau_{\nu'}) = \CC\cdot A(\alpha,\beta) $$
with $\alpha=-(\nu'+\rho_H)$ and $\beta=\frac{(\nu-\rho)+(\nu'+\rho_H)}{2}$.
\end{theorem}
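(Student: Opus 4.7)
The plan is to follow the strategy used in the rank one cases (Section~\ref{sec:RankOne}), translating the uniqueness question into a vanishing statement for $L$-invariant distributions on $G\times H$ valued in the character $\beta$, where $L=H\times P\times P_H$ and $E=\CC_\beta$, and then bounding $\dim\calD'(G\times H,E)^L$ by induction on $L$-orbits via Bruhat's Vanishing Theorem~\ref{thm:KV}.

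By Lemma~\ref{lem:MapFormsToDistributions}, every continuous $H$-intertwining operator $A\colon I^G(\nu)\to I^H(-\nu')'$ gives an element $K_A\in\calD'(G\times H,E)^L$, so it suffices to prove that this space is at most one-dimensional for generic parameters. I would fix an ordering of the finitely many $L$-orbits $\calO_{k,\ell_1,\ell_2,m}$ enumerated in Section~\ref{sec:NewExampleOrbit} so that the unique open orbit $\calO_{2n,0,n,n}$ comes last and every initial union of orbits is closed; on the open orbit Fact~\ref{fct:InvDistributionsOnHomSpaces} gives a one-dimensional bound. Running through the ordering from the outside in, it then remains to show that for every non-open orbit $\calO$ the space $\calD'_\calO(U,E)^L$ vanishes, where $U$ is the $L$-stable open neighbourhood obtained from $\calO$ by adjoining all strictly larger-dimensional orbits.

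Fix such a non-open orbit $\calO=\calO_{k,\ell_1,\ell_2,m}$ with basepoint $(g,\1)$. Its stabilizer $S\simeq P_H\cap gPg^{-1}$ contains the Levi-type subgroup $S'\simeq\GL(\ell_1,\CC)\times\GL(2n-k-2\ell_1,\RR)\times\GL(k-n+\ell_1-m,\CC)$ already identified above, for which the explicit formula for $\beta|_{S'}$ has been recorded. Applying Theorem~\ref{thm:KV} and restricting further to the subgroup $S''\subseteq S'$ with $A_2=\1$, I would invoke a parity dichotomy on $V=T_{(g,\1)}(G\times H)/T_{(g,\1)}\calO$: namely that $V|_{S''}$ decomposes as $V_1\oplus V_2$, where $V_1$ carries an $S''$-equivariant complex structure making it a complex rational representation and $V_2$ is trivial. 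This forces every one-dimensional $S''$-summand of $S^r(V_\CC)$, as well as $\chi|_{S''}$, to be of the form $|\det_\CC A_1|^{d_1}|\det_\CC A_3|^{d_3}$ with $d_1,d_3$ even. Consequently a nonzero element of $\Hom_{S'}(E\otimes\CC_\chi,S^r(V_\CC))$ would demand the three congruences $2\nu_1-\nu_1'\in n+2\ZZ$, $-\nu_1'\in\ZZ$ and $-2\nu_1-\nu_1'\in n+2\ZZ$ to hold simultaneously, all of which are excluded by the hypothesis. Theorem~\ref{thm:KV} then yields the required vanishing.

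The step I expect to be the main obstacle is the parity statement itself, specifically the construction of the complex structure on $V_1$. One has to descend a compatible complex structure from $T_{(g,\1)}(G\times H)\cong\frakg\oplus\frakh$ by exploiting that $S''$ acts on $\RR^{4n}$ through its embedding into $\GL(2n,\CC)=H$, and then verify that the only real-rational trivial contribution of $S'$ on $V$ comes from the $\GL(2n-k-2\ell_1,\RR)$ factor. Once this linear-algebraic bookkeeping is in place the induction closes and gives $\dim\calD'(G\times H,E)^L\leq 1$. Combined with the non-triviality of $A(\alpha,\beta)$ guaranteed by Theorem~\ref{thm:ConvergenceMeromorphicContinuationIntertwiners} and the parameter dictionary~\eqref{eq:ParameterRelation}, this yields the claimed identification $\Hom_H(\pi_\nu|_H,\tau_{\nu'})=\CC\cdot A(\alpha,\beta)$ for generic parameters.
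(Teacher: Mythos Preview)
Your proposal follows essentially the same route as the paper: reduce to $L$-invariant distributions via Lemma~\ref{lem:MapFormsToDistributions}, bound the contribution of the open orbit by Fact~\ref{fct:InvDistributionsOnHomSpaces}, and kill all non-open orbits using Theorem~\ref{thm:KV} together with the subgroup $S'\simeq\GL(\ell_1,\CC)\times\GL(2n-k-2\ell_1,\RR)\times\GL(k-n+\ell_1-m,\CC)$ and the parity argument through the complex structure on $V|_{S''}$. The identification of the main technical point (descending a complex structure to $V_1$) is also in line with the paper.

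There is one logical slip worth correcting. You write that a nonzero element of $\Hom_{S'}(E\otimes\CC_\chi,S^r(V_\CC))$ forces the three congruences $2\nu_1-\nu_1'\in n+2\ZZ$, $-\nu_1'\in\ZZ$, $-2\nu_1-\nu_1'\in n+2\ZZ$ to hold \emph{simultaneously}. This is not quite right: for a fixed non-open orbit $\calO_{k,\ell_1,\ell_2,m}$, some of the three blocks of $S'$ may be trivial (for instance $\ell_1>0$ with $2n-k-2\ell_1=0$ and $k-n+\ell_1-m=0$ does occur), and then only the congruence coming from the nontrivial block is forced. What is true, and what the paper uses, is that \emph{at least one} of the three blocks is nontrivial whenever the orbit is not open, so a nonzero $\Hom$ forces \emph{at least one} of the three congruences. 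Since the hypothesis excludes all three, this is still enough, and your conclusion stands; but the quantifier should be ``or'' rather than ``and'', and you should record explicitly that the non-openness of $\calO$ guarantees that not all three block sizes vanish.
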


\bibliographystyle{amsplain}
\bibliography{bibdb}

\end{document}